\definecolor{refkey}{gray}{.5}   
\definecolor{labelkey}{gray}{.5} 
\theoremstyle{plain}
\newtheorem{theorem}{Theorem}[section]
\newtheorem{lemma}[theorem]{Lemma}
\newtheorem{sublemma}[theorem]{Sublemma}
\newtheorem{question}[theorem]{Question}
\newtheorem{proposition}[theorem]{Proposition}
\newtheorem{corollary}{Corollary}
\theoremstyle{definition}
\newtheorem{remark}{Remark}
\newtheorem*{question*}{Question} 
\numberwithin{equation}{section}
\renewcommand{\phi}{\varphi}
\newcommand{\essinf}{\mathrm{essinf}}
\DeclareMathOperator{\cps}{Cap}
\DeclareMathOperator{\diam}{diam}
\title[On genericity of random coverings]{On genericity of non-uniform Dvoretzky coverings of the circle}
\thanks{
The first author is partially supported by Japan Society for the Promotion of Science (JSPS) KAKENHI Grant Number 19K03558. 
The second author is supported by the Knut and Alice Wallenberg foundation of Sweden (KAW)
}
\date{\today}
\begin{document}

\author{Michihiro Hirayama}
\author{Davit Karagulyan}

\maketitle

\begin{abstract}

The classical Dvoretzky covering problem asks for conditions on the sequence of lengths $\{\ell_n\}_{n\in \mathbb{N}}$ so that the random intervals
$I_n : = (\omega_n -(\ell_n/2), \omega_n +(\ell_n/2))$ where $\omega_n$ is a sequence of i.i.d. uniformly distributed random variables, covers any point on the circle $\mathbb{T}$ infinitely often.
We consider the Dvoretzky covering problem when the distribution of $\omega_n$ is absolutely continuous with a density function $f$. 
When $m_f=\essinf_\mathbb{T}f>0$ and the set $K_f$ of its essential infimum points satisfies $\overline{\dim}_\mathrm{B} K_f<1$, where $\overline{\dim}_\mathrm{B}$ is the upper box-counting dimension, we show that the following condition is necessary and sufficient for $\mathbb{T}$ to be $\mu_f$-Dvoretzky covered
\[
\limsup_{n \rightarrow \infty} \left(\frac{\ell_1 + \dots + \ell_n}{\ln n}\right)\geq \frac{1}{m_f}.
\]
We next show that as long as $\{\ell_n\}_{n\in \mathbb{N}}$ and $f$ satisfy the above condition and $|K_f|=0$, then a Menshov type genericity  result holds, i.e.  Dvoretzky covering can be achieved by changing $f$ on a set of arbitrarily small Lebesgue measure. 

\end{abstract}

\section{Introduction}

Let $\{ \ell_{n}\}_{n\in \mathbb{N}}$ be a sequence of positive numbers with $0<\ell_{n}<1$ and $\left\{\omega_{n}\right\}_{n \in \mathbb{N}}$ a sequence of i.i.d. random variables on the circle $\mathbb{T}=\mathbb{R}/\mathbb{Z}$. 
Consider the random intervals
\[
I_{n}:=\left(\omega_{n}-r_{n}, \omega_{n}+r_{n}\right),
\]
where $r_n=\ell_{n} / 2$. 
If $\left\{\omega_{n}\right\}_{n\in \mathbb{N}}$ are uniformly distributed on $\mathbb{T}$, the Borel-Cantelli lemma assures that Lebesgue almost every (a.e. for short) point on $\mathbb{T}$ is covered infinitely often with probability one if and only if $\sum_{n=1}^{\infty} \ell_{n}=\infty$. 
Moreover, $\sum_{n=1}^{\infty} \ell_{n}<\infty$ implies that Lebesgue a.e. point in $\mathbb{T}$ is covered finitely often with probability one. 
In \cite{Dvoretzky1956}, Dvoretzky  asked the question whether $\mathbb{T}= \varlimsup_{n \rightarrow \infty} I_{n}(\omega)$ a.s. This condition means that every point on $\mathbb{T}$ is covered infinitely often with the intervals $I_n$.
Kahane \cite{Kahane1959} proved that $\mathbb{T}=\varlimsup_{n\to \infty } I_{n}(\omega)$ a.s. when $\ell_{n}=c/n$ with $c>1$, but this is not the case with $0<c<1$ as was proved by Billard \cite{Billard1965}. 
The case $\ell_{n}=1/n$ was solved by Mandelbrot \cite{Mandelbrot1972}. 
A complete answer was obtained by Shepp \cite{Shepp1972}: $\mathbb{T}=\varlimsup_{n\to \infty} I_{n}(\omega)$ a.s. if and only if
\[
\sum_{n=1}^{\infty} \frac{1}{n^{2}} \exp \left(\ell_{1}+\cdots+\ell_{n}\right)=\infty .
\]

We remark that in \cite{Fan-Karagulyan2021} the authors extend this result to the case of all absolutely continuous measures: more specifically they show that for the absolutely continuous measure with the density $f$ there is Dvoretzky covering for the sequence $\ell_{n}=\frac{c}{n}$ if and only if $m_f c\geq 1$, where $m_f$ is the essential infimum of $f$.

The Dvoretzky covering problem has also been studied for compact sets $F$. 
It is know that a compact set $F \subset \mathbb{T}$ is covered if and only if 
\begin{equation}\label{cond:Kahane}
	{\rm Cap}(F)=0
\end{equation}
where \eqref{cond:Kahane} means that for any Borel probability measure $\sigma$ supported on $F$ we have
\[
\int_{\mathbb{T}} \int_{\mathbb{T}} \Phi(t,s) d\sigma(t) d\sigma(s) =+\infty,
\]
where
\[
 \Phi(t, s) = \exp \left\{ \sum_{n=1}^\infty (\ell_n -|t-s|)_+ \right\}.
\]
This is called {\em Kahane's condition}, which was obtained by J-P. Kahane \cite{Kahane1990}. 
If \eqref{cond:Kahane} does not hold we write ${\rm Cap}(F)>0$.

For further information on the Dvoretzky random covering, we refer the reader to \cite{Kahane1985}. 
The Hausdorff dimension of the set that is covered infinitely often by the sequence $l_{n}=1 / n^{\alpha}$ was first calculated by Fan and Wu \cite{Fan-Wu2004}. The Hausdorff dimensions of the set of the form $I(x)=\limsup _{n\to \infty}J_n(x)$, where $J_n(x)=\left(x_n-r_n, x_n+r_{n}\right)$ and $x_{n+1}= 2 x_n \pmod 1$, was calculated by Fan, Schmeling and Troubetzkoy in \cite{Fan-Schmeling-Troubetzkoy2013}. 
Persson and Rams \cite{Persson-Rams2017} considered the same problem for general piecewise expanding maps.

\subsection{Non-uniform Dvoretzky covering}

The Dvoretzky covering problem for non-uniform measures, i.e. when $\omega_n$ are no longer uniformly distributed, is very subtle and there is little known. Even the case of absolute continuous measures is not well understood. For the uniform density one can use methods from \cite{Jasson1986} or Poisson covering techniques introduced in \cite{Shepp1972}. However, these methods are based on the homogeneity of the uniform density and are thus less practical for non-uniform densities. In this paper we provide a condition such that if it is satisfied by $\omega_n$ and $\{\ell_n\}_{n \geq 1}$, then Dvoretzky covering can be achieved by changing the density of $\omega_n$ on an arbitrary small Lebesgue measure (see Theorem \ref{mainThm:perturbation}). The result holds for a large class of densities. In harmonic analysis, this type of genericity results are known as Menshov type.

In \cite{Tang2012}, J. Tang studies Dvoretzky covering for singular measure and finds the optimal covering exponent $t_0$ for the sequence $1/n^t$, i.e. so that there is covering for the sequence $1/n^t$ when $t<t_0$, and there is no, if $t>t_0$.
We remark that when $\mu$ is absolutely continuous then $t_0=1$. 

When $\mu$ allows absolute continuous density and the density function $f$ is `flat" (for example Lipschitz) around the set $K_f$ of its essential infimum points, in \cite{Fan-Karagulyan2021} the authors give a necessary and sufficient condition for covering. In the case of the sequence $\ell_n=c/n$ the authors give a necessary and sufficient condition without imposing any regularity assumptions.  

    In this paper, we obtain results for absolute continuous measures that do not impose any assumptions on the regularity of the density $f$. Our argument uses some ideas from the study of convergence properties of convolution operators in tangential directions. This part is of independent interest.

To distinguish the classical and non-classical cases we will denote the probability measure admitting a (non-uniform) density $f\in L^1(\mathbb{T})$ by $\mu_f$ and will say that $\mathbb{T}$ is $\mu_f$-Dvoretzky covered if $\mathbb{T}= \varlimsup_{n \rightarrow \infty} I_{n}(\omega)$ a.s..

\subsubsection{Statement of results}

We now state out first Theorem: 

\begin{theorem} \label{mainThm:ub_dvoretzky-hawkes}
Let $f\in L^1(\mathbb{T})$ be a probability density function and $\{\ell_n\}_{n \in \mathbb{N}}$ be a decreasing sequence of positive numbers with $0<\ell_{n}<1$.  
\begin{enumerate}
    \item Then the following condition is necessary for $\mu_f$-Dvoretzky covering:
    \begin{equation} \label{mD}
    \limsup_{n \rightarrow \infty}\left(\frac{\ell_1+ \dots + \ell_n}{\ln n}\right)\geq \frac{1}{m_f},
    \end{equation} 
    with the convention that $1/m_f=\infty$ when $m_f=0$. If $m_f>0$ and the inequality above is strict, then \eqref{mD} is also sufficient for $\mu_f$-Dvoretzky covering.
    \item If $m_f>0$ and $\overline{\dim} _\mathrm{B} K_f<1$, then \eqref{mD} is  necessary and sufficient for covering.  
\end{enumerate}
\end{theorem}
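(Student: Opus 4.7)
Necessity follows immediately from part (1), so the content of part (2) is sufficiency: under the hypotheses $m_f > 0$, $\overline{\dim}_\mathrm{B} K_f < 1$, and $\limsup_n L_n/\ln n \geq 1/m_f$ (writing $L_n := \ell_1 + \dots + \ell_n$), one must prove $\mathbb{T}$ is $\mu_f$-Dvoretzky covered. My plan is to reduce to the two-ingredient sufficient condition of \cite{Fan-Karagulyan2021}: it suffices to show (a) $\sum_n n^{-2} e^{a L_n} = \infty$ for every $a > m_f$, and (b) $\mathrm{Cap}_{m_f}(K_f) = 0$ with respect to the kernel $\Phi^{(m_f)}(t, s) = \exp\bigl(m_f \sum_n (\ell_n - |t-s|)_+\bigr)$.

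For (a), pick a subsequence $(n_k)$ realising the limsup, so $L_{n_k} \geq (1/m_f - \epsilon_k) \ln n_k$ with $\epsilon_k \to 0$. Because $\ell_n$ is decreasing, $L_n$ is increasing, so on the block $n \in [n_k, n_k^{1+\delta}]$ one has $L_n \geq L_{n_k}$ while $\ln n \leq (1+\delta) \ln n_k$; thus $L_n/\ln n \geq (1/m_f - \epsilon_k)/(1+\delta)$ throughout the block. For any fixed $a > m_f$, choosing $\delta$ and $\epsilon_k$ small enough makes $a L_n - 2\ln n \geq \gamma \ln n$ on the block for some $\gamma = \gamma(a,\delta) > 0$, so $\sum_{n_k \leq n \leq n_k^{1+\delta}} n^{-2} e^{a L_n} \geq n_k^{\gamma'}$ for some $\gamma' > 0$; letting $k \to \infty$ forces the full sum to diverge.

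For (b), set $\rho = |t-s|$ and $N(\rho) = \#\{n : \ell_n > \rho\}$, so that $\sum_n (\ell_n - \rho)_+ = L_{N(\rho)} - N(\rho) \rho$. Using $\ell_n \leq L_n/n$ together with the limsup hypothesis, I would verify $N(\rho)\rho = o(L_{N(\rho)})$ along the relevant scales, whence $\Phi^{(m_f)}(t, s) \asymp \exp(m_f L_{N(\rho)})$. Converting the block bound from the previous step into a scale bound via $\ln N(\rho) \asymp \ln(1/\rho)$ gives $\Phi^{(m_f)}(t, s) \gtrsim |t-s|^{-s_0}$ for some $s_0$ strictly between $\overline{\dim}_\mathrm{B} K_f$ and $1$; the strict dimension gap is exactly what provides the room for such $s_0$. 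Since $s_0 > \dim_\mathrm{H} K_f$, classical Frostman potential theory gives $\iint |t-s|^{-s_0}\, d\sigma(t) d\sigma(s) = \infty$ for every probability measure $\sigma$ on $K_f$; a fortiori $\iint \Phi^{(m_f)}\, d\sigma^2 = \infty$, which is (b).

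The hard part is (b): the limsup supplies information only at the discrete scales $\rho \approx \ell_{n_k}$, whereas the capacity bound requires a kernel lower bound essentially at every small scale. Monotonicity/concavity of $L_n$ propagates the bound only across blocks $[n_k, n_k^{1+\delta}]$, so a sparse subsequence could degrade the effective exponent $s_0$ below $\overline{\dim}_\mathrm{B} K_f$ on intermediate scales $\rho \in (\ell_{n_{k+1}}, \ell_{n_k^{1+\delta}}]$. This is presumably where the convolution-operator / tangential-convergence techniques flagged in the introduction enter: they should permit one to interpolate kernel estimates across all scales, exploiting that $\overline{\dim}_\mathrm{B}$ (unlike $\dim_\mathrm{H}$) controls covering numbers of $K_f$ uniformly in scale.
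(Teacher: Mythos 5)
Your reduction to the two conditions of Proposition \ref{cond:fk} — the Shepp-type series divergence and $\mathrm{Cap}_{m_f}(K_f)=0$ — is exactly how the paper proceeds, and your block argument for the series divergence is essentially the paper's Lemma \ref{lem:shepp^a-hawkes}. The disagreement is entirely in how the capacity condition is handled, and the gap you flag in your own proposal is a genuine one.

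For the capacity step, you attempt a pointwise lower bound $\Phi^{(m_f)}(t,s) \gtrsim |t-s|^{-s_0}$ with $s_0\in(\overline{\dim}_\mathrm{B}K_f,1)$ and then invoke Riesz energy. But, as you observe, the hypothesis $\limsup_n L_n/\ln n\geq 1/m_f$ controls $L_{N(\rho)}$ only along the scales $\rho\approx\ell_{n_k}$ for a sparse subsequence $\{n_k\}$; on intermediate scales $L_{N(\rho)}/\ln(1/\rho)$ can dip arbitrarily low, and then so can the effective exponent $s_0$. A capacity integral is sensitive to \emph{every} small scale, because the measure $\sigma$ that one has to defeat may be concentrated at exactly the bad scales, so the Riesz-kernel comparison genuinely breaks. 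Your suggestion that the tangential-convergence machinery from the introduction should rescue this is a misdirection: in the paper those techniques are used only for the necessity direction (Proposition \ref{mainProp:DtoS}), not for sufficiency.

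The paper sidesteps the problem by a scaling observation: writing $\widetilde{K_f}=m_f K_f$ and $\tilde\sigma=\sigma\circ(m_f)^{-1}$, one has
\[
\int\!\!\int \exp\Bigl\{m_f\sum_k(\ell_k-|t-s|)_+\Bigr\}\,d\sigma\,d\sigma = \int\!\!\int \exp\Bigl\{\sum_k(m_f\ell_k-|u-v|)_+\Bigr\}\,d\tilde\sigma\,d\tilde\sigma,
\]
so $\mathrm{Cap}_{m_f}(K_f)=0$ is \emph{equivalent} to $\mathrm{Cap}(\widetilde{K_f})=0$ for the \emph{classical} Kahane kernel with the rescaled lengths $\{m_f\ell_n\}$. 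Since $\overline{\dim}_\mathrm{B}\widetilde{K_f}=\overline{\dim}_\mathrm{B}K_f<1\leq m_fD$, Kahane's Theorem 4(1) (\cite{Kahane1985}, p.~150, p.~160) says the scaled compact set is Dvoretzky covered in the classical sense, and Kahane's covering--capacity equivalence \eqref{cond:Kahane} then gives $\mathrm{Cap}(\widetilde{K_f})=0$. The reason this works where your pointwise kernel bound fails is that Kahane's Theorem 4 is proved by a direct Borel--Cantelli estimate on $\Pr(F\not\subset U_{n_k})$, which only needs to be applied \emph{along} the limsup subsequence; one never needs control at intermediate scales. The covering--capacity equivalence then converts this probabilistic input into the capacity statement for free, avoiding any interpolation across scales. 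So the fix is not a finer harmonic-analysis estimate, but a change of route: prove covering of $\widetilde{K_f}$ probabilistically along the good subsequence, then read off the capacity from the equivalence, rather than trying to establish the capacity statement directly.
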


Here $\overline{\dim} _\mathrm{B}$ denotes the upper box-counting dimension. 

\begin{remark}
Theorem \ref{mainThm:ub_dvoretzky-hawkes} is also true under the weaker assumption $\dim_\mathrm{P} K_f<1$, where $\dim_\mathrm{P}$ is the packing dimension. 
(We refer the reader to \cite{Falconer1990}*{Section 3.4} for its definition and properties.) 
By \cite{Falconer1990}*{Proposition 3.8} we have $\dim_\mathrm{P}F=\inf \{ \sup _{i\in \mathbb{N}}\overline{\dim} _\mathrm{B} F_i\colon F\subset \cup F_i\} $, where the infimum is taken over all countable covers $\{ F_i\}$ of $F$. 
Therefore, once Theorem \ref{mainThm:ub_dvoretzky-hawkes} is obtained one can replace $\overline{\dim} _\mathrm{B} K_f<1$ with $\dim_\mathrm{P} K_f<1$. We will omit the details of the proof. 
\end{remark}

For $s \in \mathbb{T}$ and $r>0$, let $B(s, r)$ denote the ball of radius $r$ centered at $s$.
The proof of Theorem \ref{mainThm:ub_dvoretzky-hawkes} is based on the analysis of convergence properties of the following operator 
\begin{equation}\label{estm}
f\mapsto f \ast \psi_r(s)=\frac{\sum_{k=1}^\infty \mu_f(B(t, r_k)\cap B(s,r_k))}{\sum_{k=1}^\infty (\ell_k-|t-s|)_+} 
\end{equation}
where $r=|t-s|$, and $\psi_r$ is an absolute continuous kernel which is an approximation of unity (to be defined in Section \ref{discussion}). When $\psi_r$ is an approximation of unity, then studying the almost sure convergence properties of convolution operators is a classical question in harmonic analysis. For kernels $\psi_r$, which are symmetric around the origin, one expects $f \ast \psi_r(s) \rightarrow f(s)$, for almost every $s\in \mathbb{T}$ as $r \rightarrow 0$. However, the study of convergence properties in so called "tangential" directions is much more complicated and can lead to divergent phenomena. One of the most classical examples of kernels where divergence occurs is the Poisson kernel \cite{Littlewood1927}. As it will be seen in Section \ref{mainProp:DtoS} the convergence of the operator in \eqref{estm} can be of tangential nature.

It seems natural to ask whether Theorem \ref{mainThm:ub_dvoretzky-hawkes}-(b) is true under the assumption $\overline{\dim}_\mathrm{B} K_f =1$ or a (much) weaker assumption $\dim_\mathrm{H} K_f<1$, where $\dim_\mathrm{H}$ denotes the Hausdorff dimension (recall that $\dim_\mathrm{H} K_f\leq \underline{\dim}_\mathrm{B} K_f$). 
The following result shows that the answer to this question is negative:

\begin{theorem} \label{mainThm:h_d_fail1} 
Theorem \ref{mainThm:ub_dvoretzky-hawkes}-(b) is not true under the assumption $\dim_\mathrm{H} K_f<1$, that is, there exist a probability density function $f$ with $m_f>0$ and $\dim_\mathrm{H} K_f<\overline{\dim}_\mathrm{B} K_f= 1$, and a sequence $\{\ell_n\}_{n \in \mathbb{N}}$ satisfying \eqref{mD} such that there is no $\mu_f$-Dvoretzky covering for $\mathbb{T}$. 
\end{theorem}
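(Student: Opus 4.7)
The plan is to exhibit an explicit triple $(K,f,\{\ell_n\})$ witnessing the failure, which I would organise in three stages.

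\emph{Stage one: building $K$.} I would construct an uncountable compact set $K\subset\mathbb{T}$ with $\dim_\mathrm{H} K=0$ and $\overline{\dim}_\mathrm{B} K=1$ as a Moran-type Cantor set alternating very long \emph{contracting} stages (in which surviving intervals shrink but do not split) with shorter \emph{branching} stages (in which each surviving interval is subdivided into several children). Writing $A_k$ and $B_k$ for the logarithms of the number of surviving intervals and of the inverse of their common length at stage $k$, the classical Moran formulas give
\[
\overline{\dim}_\mathrm{B} K = \limsup_{k\to\infty}\frac{A_k}{B_k},\qquad \dim_\mathrm{H} K = \liminf_{k\to\infty}\frac{A_k}{B_k},
\]
under the standard separation hypothesis. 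Choosing the phase lengths so that $A_k/B_k$ returns arbitrarily close to $1$ at the end of each branching phase but drops arbitrarily close to $0$ at the end of each subsequent, much longer, contracting phase realises $\overline{\dim}_\mathrm{B} K=1$ and $\dim_\mathrm{H} K=0$. The constraint $A_k-A_{k-1}\le B_k-B_{k-1}$ is automatic since in a contracting stage only $B_k$ grows.

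\emph{Stage two: realising $K$ as $K_f$.} Pick a continuous $h\ge 0$ on $\mathbb{T}$ vanishing exactly on $K$ and set $f=(c+h)/\int_\mathbb{T}(c+h)$ for a constant $c>0$. Then $m_f=c/\int(c+h)>0$, the essential-infimum set of $f$ equals $K$, and Stage one yields $\dim_\mathrm{H} K_f<\overline{\dim}_\mathrm{B} K_f=1$.

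\emph{Stage three: length sequence and failure of covering.} I would choose $\{\ell_n\}$ so that $L_n=\ell_1+\cdots+\ell_n$ reaches $(1/m_f)\ln n$ only along a sparse subsequence $\{n_j\}$ calibrated to the branching scales of $K$ and stays strictly below this threshold elsewhere; then \eqref{mD} is immediate. To show that $\mu_f$-Dvoretzky covering nevertheless fails, I would run a second-moment (Paley--Zygmund) argument against the natural Bernoulli measure $\sigma$ on $K$: using independence of $\{\omega_n\}$ and that $f\equiv m_f$ essentially near $K$, the first and second moments of $\int_K\mathbbm{1}\{s\notin\varlimsup_n I_n\}\,d\sigma(s)$ can be bounded in terms of the kernel
\[
\Phi_{m_f}(t,s)=\exp\Bigl\{m_f\sum_{n\ge 1}(\ell_n-|t-s|)_+\Bigr\},
\]
and the problem reduces to showing $\iint_{K\times K}\Phi_{m_f}(t,s)\,d\sigma(t)\,d\sigma(s)<\infty$. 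If so, Paley--Zygmund yields $\sigma(\mathbb{T}\setminus\varlimsup_n I_n)>0$ with positive probability, contradicting $\mathbb{T}=\varlimsup_n I_n$ almost surely.

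The main obstacle is the simultaneous calibration in Stage three. The subsequence $\{n_j\}$ must be dense enough to realise \eqref{mD} yet sparse enough (together with the weights $\ell_{n_j}$) that $m_f\sum_n(\ell_n-r)_+$ grows only slowly in $\log(1/r)$ at the typical separations $r=|t-s|$ occurring on the support of $\sigma\otimes\sigma$; simultaneously, the branching schedule of $K$ must be aligned with $\{n_j\}$ so that $\sigma$ assigns very small mass to small balls. Matching the scales at which $K$ attains box dimension $1$ to the scales at which $\ell_n$ concentrates is precisely where the gap between $\dim_\mathrm{H}K_f<1$ and $\overline{\dim}_\mathrm{B}K_f=1$ is exploited, and this quantitative matching is the delicate technical point.
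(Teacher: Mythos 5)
Your plan matches the paper's proof essentially stage for stage: an inhomogeneous Cantor set with $\dim_\mathrm{H}<\overline{\dim}_\mathrm{B}=1$ realized as $K_f$ via a density equal to $m_f$ on it, a length sequence that is piecewise constant on blocks calibrated to the Cantor scales, and a finite $\Phi^{(m_f)}$-energy bound on $K_f\times K_f$ giving non-covering via the local Billard / Paley--Zygmund criterion. The simultaneous calibration you flag as the delicate step is indeed where essentially all of the paper's work lies (Lemma \ref{lem:inhomocantor} together with the explicit choice of $n_k$ and the resulting $\ell_n$); your outline correctly identifies this obstacle but leaves it uncarried out.
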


In Theorem \ref{mainThm:h_d_fail2}, Section \ref{examples}, we provide conditions on $\{\ell_n\}_{n \geq 1}$ under which Theorem \ref{mainThm:ub_dvoretzky-hawkes}-(b) is true if $\dim_\mathrm{H} K_f<1$.

It is clear from Theorem \ref{mainThm:h_d_fail1} that  condition \eqref{mD} is not always sufficient for covering. 
Nevertheless, we show that \eqref{mD} is in fact a ``Menshov generic" condition for $\mu_f$-Dvoretzky covering when $|K_f|=0$. 
Namely, 
when $|K_f|=0$ and $\{\ell_n\}_{n \geq 1}$ satisfies \eqref{mD}, Dvoretzky covering can be achieved by changing $f$ on a set of arbitrarily small Lebesgue measure. 
Note that the case $m_f=0$ is allowed in the following result. 

\begin{theorem}\label{mainThm:perturbation}
Let $f$ be a probability density function and $\{\ell_n\}_{n \in \mathbb{N}}$ be a decreasing sequence of positive numbers with $0<\ell_{n}<1$ satisfying \eqref{mD}. 
If $|K_f|=0$, then for every $\varepsilon >0$ one can find a density function $f_0$ so that $m_{f_0}=m_{f}$, $K_{f_0}=K_f$,  
\[
|\{x\in \mathbb{T}\colon f(x)\neq f_0(x)\}|\leq \varepsilon,
\]
and the circle is $\mu_{f_0}$-Dvoretzky covered for $\{\ell_n\}_{n \in \mathbb{N}}$.
\end{theorem}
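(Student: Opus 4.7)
My plan is to leave $f$ unchanged outside a small open neighborhood of $K_f$ and to replace it inside by an explicit Lipschitz profile, so that the new density $f_0$ is flat around its essential-infimum set; once this is done, the Lipschitz sufficiency result of \cite{Fan-Karagulyan2021}, whose hypotheses carry no dimension restriction on $K_{f_0}$, will yield Dvoretzky covering directly from \eqref{mD}. I first dispense with an easy subcase: if $m_f>0$ and the inequality in \eqref{mD} is strict, we are already done by Theorem~\ref{mainThm:ub_dvoretzky-hawkes}-(a) with $f_0=f$, so I may assume equality in \eqref{mD}.

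\textbf{Construction.} Using $|K_f|=0$ together with the fact that the level sets $\{f<m_f+\eta\}$ have measure tending to $0$ as $\eta\downarrow 0$, I choose $\eta>0$ and an open set $U\supset K_f\cup\{f<m_f+\eta\}$ with $|U|\le\eps$; this choice guarantees that $\essinf_{\T\setminus U} f \ge m_f+\eta$. Then I define $f_0=f$ on $\T\setminus U$ and
\[
f_0(x)=m_f+\alpha\,\dist(x,K_f), \qquad x\in U,
\]
where $\alpha>0$ is the unique constant making $\int_U f_0\,dx = \int_U f\,dx$; such $\alpha$ exists because both $\int_U(f-m_f)\,dx$ (since $|K_f|=0<|U|$) and $\int_U \dist(x,K_f)\,dx$ are strictly positive. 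A direct verification should then show that $f_0$ is a probability density, $m_{f_0}=m_f$ (attained as $\dist(x,K_f)\to 0$), $K_{f_0}=K_f$ (the gap $\eta$ outside $U$ and the strict positivity of $\alpha\,\dist(x,K_f)$ on $U\setminus K_f$ prevent new essential-infimum points from appearing), and $|\{f\neq f_0\}|\le|U|\le\eps$.

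\textbf{Covering and main obstacle.} Since $f_0$ is Lipschitz on the neighborhood $U$ of $K_{f_0}$, the sufficiency half of the Lipschitz-density criterion in \cite{Fan-Karagulyan2021} applies to $f_0$, and \eqref{mD} (which is unchanged since $m_{f_0}=m_f$) delivers $\mu_{f_0}$-Dvoretzky covering. The most delicate point is verifying $K_{f_0}=K_f$ in the precise topological sense used in the paper (rather than the set-theoretic $\{f_0=m_f\}$), which I would handle by choosing $U$ carefully enough that no essential-infimum points of $f$ are created or destroyed by the modification. A secondary difficulty is the case $m_f=0$, for which the black-box appeal to \cite{Fan-Karagulyan2021} may not directly apply; there I would either replace the linear profile $\alpha\,\dist(x,K_f)$ by a suitable power chosen to close a direct Kahane-capacity computation using the very strong hypothesis $\limsup_{n\to\infty}(\ell_1+\cdots+\ell_n)/\ln n=+\infty$, or run a truncation reducing to the $m_f>0$ regime.
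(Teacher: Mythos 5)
Your strategy is structurally incompatible with what the theorem asserts, and the gap is fatal rather than technical. After you flatten $f_0$ to a Lipschitz profile on a neighborhood of $K_f=K_{f_0}$, the Fan--Karagulyan result for Lipschitz densities (\cite{Fan-Karagulyan2021}*{Corollary 1.1-(2)}) does not say ``covering follows from \eqref{mD}''; it says covering holds if and only if both the Shepp condition \eqref{cond:shepp^a} \emph{and} the Kahane capacity condition $\mathrm{Cap}_{m_f}(K_f)=0$ hold. Condition \eqref{mD} gives you only \eqref{cond:shepp^a} (via Lemma~\ref{lem:shepp^a-hawkes}); it does not control $\mathrm{Cap}_{m_f}(K_f)$. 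And the paper's own Theorem~\ref{mainThm:h_d_fail1} constructs a sequence $\{\ell_n\}$ satisfying \eqref{mD} together with a set $K_f$ of measure zero and $m_f>0$ for which $\mathrm{Cap}_{m_f}(K_f)>0$. Feed that pair into your construction: your $f_0$ is Lipschitz near $K_{f_0}=K_f$ with $m_{f_0}=m_f$, so the \emph{necessity} half of the Lipschitz criterion forces \emph{non}-covering. Thus your $f_0$ provably fails in exactly the situation the theorem is designed to handle, and no choice of slope $\alpha$ or power profile can repair this, since capacity depends only on $K_f$, $m_f$ and $\{\ell_n\}$, none of which you change. Indeed Corollary~\ref{mainThm:necS-unnecK} makes the point explicit: Theorem~\ref{mainThm:perturbation} is precisely how the paper produces examples of covering with $\mathrm{Cap}_{m_f}(K_f)>0$, which is impossible for a density that is Lipschitz on $K_f$.

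The paper's actual mechanism goes in the opposite direction from flattening. It makes the new density \emph{large} (equal to the normalizing constant $\gamma\geq 1$) on the overwhelming bulk of every small ball around $K_f$, and dips toward $m_f$ only on a thin union of balls $\mathcal{Y}^{(k)}$ chosen to accumulate on $K_f$ so as to preserve the essential-infimum set (Lemma~\ref{lem:main}). The covering of $K_f$ is then proved by a bare first-moment bound \eqref{ineq:A1}: the key estimate is $\int_{I_{j,q}^{\varepsilon_k}} g\,dt\geq |I_{j,q}^{\varepsilon_k}|$, i.e.\ over balls of the scales $\varepsilon_k\sim 1/n_2^{(k)}$ the modified density integrates at least like the \emph{uniform} density, so the probability of missing a covering ball decays like $\exp(-\sum_j \ell_j + O(\ln n))\to 0$. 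This uses $|K_f|=0$ essentially (to fit the spikes inside an arbitrarily small set) and never passes through capacity. Your plan needs to be replaced by something of this kind; a Lipschitz reduction cannot work.
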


{

}

In Section \ref{examples} we construct examples that complement the results above in various aspects. We also discuss some corollaries of our theorems.

\section*{Acknowledgements}
The authors would like to thank Tomas Persson, Jeffrey Steif, Grigori Karagulyan,  Aihua Fan and J\"org Schmeling for discussions on the manuscript. The first author is partially supported by Japan Society for the Promotion of Science (JSPS)
KAKENHI Grant Number 19K03558. The second author is supported by the Knut and Alice
Wallenberg foundation of Sweden (KAW) .

\thanks{}

\addtocontents{toc}{\protect\setcounter{tocdepth}{1}}
\section{Preliminaries} \label{preliminary}

\subsection{Some definitions} \label{definitions}

\subsubsection*{Essential infimum}

For a function $f\colon \mathbb{T}\to \mathbb{R}$ and $J\subset \mathbb{T}$ with $|J|>0$, define the \emph{essential infimum} of $f$ on $J$ by
\[
\mathrm{ess}\inf\nolimits_{J}f=\sup \{ a\in \mathbb{R}\colon f(x)\geq a\text{ for Lebesgue almost every }x\in J\}.
\]
We denote $m_f=\mathrm{ess}\inf_\mathbb{T}f$. 
Then the \emph{set of essential infimum points} is defined by
\[
K_f=\left\{ x\in \mathbb{T}\colon \lim_{n\to \infty}\mathrm{ess}\inf\nolimits_{B(x,1/n)}f=m_f\right\},
\]
where $B(x,r)$ denotes the ball centered at $x$ of radius $r$. 
For every Borel measurable function $f\colon \mathbb{T}\to \mathbb{R}$, the set $K_f$ is non-empty and compact (\cite{Fan-Karagulyan2021}*{Porposition 2.1}).

\subsubsection*{Hausdorff dimension and box dimension}

We recall briefly the notion of Hasudorff dimension and box dimension. 
See, for example, ~\cite{Falconer1990}.

Let $(X,d)$ be a metric space and $\alpha \geq 0$. 
For $Y\subset X$, set 
\begin{equation*}
\mathcal{H}^\alpha _\delta (Y)=\inf \left\{ \sum _{i\in \mathbb{N} }(\mathrm{diam}V_i)^{\alpha }\right\} ,
\end{equation*}
where the infimum is taken over all covers of $Y$ by a countable collection of subsets $V_i\subset X$ with $\mathrm{diam} V_i\leq \delta $. 
The \emph{$\alpha $-dimensional Hausdorff measure} of $Y$ is defined as
\begin{equation*}
\mathcal{H}^{\alpha }(Y)=\lim _{\delta \to 0}\mathcal{H}^\alpha _\delta (Y). 
\end{equation*} 
The limit exists and $\mathcal{H}^{\alpha }$ defines an outer measure. 
The \emph{Hausdorff dimension} $\dim _\mathrm{H}Y$ of $Y$ is defined by 
\begin{equation*}
\dim _\mathrm{H}Y=\sup \left\{ \alpha \colon \mathcal{H} ^{\alpha }(Y)=\infty \right\} =\inf \left\{ \alpha \colon \mathcal{H} ^{\alpha }(Y)=0\right\} .
\end{equation*}

For any non-empty bounded set $Z\subset X$ and $\delta \in (0,\infty)$, let $M(Z,\delta )$ be the smallest number of $\delta $-balls needed to cover $Z$. 
The \emph{lower} and \emph{upper box dimensions} of $Z$ are defined as
\[
\underline{\dim} _\mathrm{B}Z=\liminf_{\delta \to 0} \frac{\ln M(Z,\delta)}{-\ln \delta}
\]
and 
\[
\overline{\dim} _\mathrm{B}Z=\limsup_{\delta \to 0} \frac{\ln M(Z,\delta)}{-\ln \delta},
\]
respectively. 
If these are equal, then the common value is referred to as the \emph{box-counting} or \emph{box dimension} of $Z$ and denoted by $\dim _\mathrm{B}Z$. 
(There are several equivalent definitions of box dimension.) 
It is sometimes referred to as the \emph{Minkowski dimension}. 

It is known that $\dim _\mathrm{H}Z\leq \underline{\dim} _\mathrm{B}Z\leq \overline{\dim} _\mathrm{B}Z$, and these inequalities can be strict. 

\subsubsection*{Capacity} For $a>0$, define
\[
\Phi^{(a)}(t,s)=\exp \left\{a \sum_{n=1}^\infty (\ell_n - |t-s|)_+\right\},
\]
where $(d)_+=\max \{ d,0\}$. 
Let $F\subset \mathbb{T}$ be a compact set. 
We say $F$ has zero capacity with respect to $\Phi^{(a)}$ and denote by $\mathrm{Cap}_{a}(F)=0$ if
\[
\int_{F} \int_{F} \Phi^{(a)}(t,s)\, d\sigma(t)d\sigma(s)=\infty
\]
for every Borel probability measure $\sigma $ supported on $F$. 
Otherwise, we say $F$ has positive capacity and write $\mathrm{Cap}_{a}(F)>0$. Note that $\mathrm{Cap}_{1}$ coincides with the (classical) Kahane Capacity \eqref{cond:Kahane}.

\subsection{A sufficient condition for non-uniform Dvoretzky covering of the circle} \label{FK}

 In \cite{Fan-Karagulyan2021} the authors give a sufficient condition for covering when $m_f>0$ as follows:  
\begin{proposition}[\cite{Fan-Karagulyan2021}*{Proposition 5.2}] \label{cond:fk}
Assume $m_f>0$. 
If the conditions 
\begin{equation}\label{cond:shepp^a}
	 \text{for all } a >m_f,\quad \sum_{n=1}^\infty \frac{1}{n^2}e^{a(\ell_1+\dots + \ell_n)}=\infty ;
\end{equation}
	and
\begin{equation}\label{cond:kahane^m}
	    \mathrm{Cap}_{m_f}(K_f)=0,
\end{equation}
are fulfilled, then the circle is $\mu _f$-Dvoretzky covered.
\end{proposition}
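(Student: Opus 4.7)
The plan is to split $\mathbb{T}=K_f\cup(\mathbb{T}\setminus K_f)$ and show that each part is contained in $\varlimsup_n I_n$ almost surely, using \eqref{cond:shepp^a} off $K_f$ and \eqref{cond:kahane^m} on $K_f$. The common starting point is the pointwise lower bound $f\geq m_f$ a.e., which gives, for any $x,y\in\mathbb{T}$ and $r>0$,
\[
\mu_f\bigl(B(x,r)\cap B(y,r)\bigr)\geq m_f\,(2r-|x-y|)_+,
\]
so that when one takes second moments of products $\prod_k\mathbbm{1}_{\{x,y\notin I_k\}}$ the exponent $m_f\sum_k(\ell_k-|x-y|)_+$—precisely the logarithm of the kernel $\Phi^{(m_f)}$ appearing in \eqref{cond:kahane^m}—arises naturally.

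For the open set $\mathbb{T}\setminus K_f$, the definition of $K_f$ yields, for each $x\notin K_f$, constants $a(x)>m_f$ and $\delta(x)>0$ with $\essinf_{B(x,2\delta(x))} f\geq a(x)$. On such a ball $\mu_f$ dominates $a(x)\cdot\mathrm{Leb}|_{B(x,2\delta(x))}$, so a localized Shepp-type first/second-moment argument (or Shepp's theorem applied after a coupling with the uniform subsample guaranteed by $f\geq a(x)$) shows that $B(x,\delta(x))$ is $\mu_f$-Dvoretzky covered as soon as $\sum_n n^{-2}\exp\bigl(a(x)(\ell_1+\cdots+\ell_n)\bigr)=\infty$, which is exactly hypothesis \eqref{cond:shepp^a} specialized to $a=a(x)>m_f$. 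Writing $\mathbb{T}\setminus K_f$ as a countable union of such balls and intersecting the associated full-probability events gives $\mathbb{T}\setminus K_f\subset\varlimsup_n I_n$ a.s.

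For $K_f$, I would run Kahane's potential-theoretic argument against the kernel $\Phi^{(m_f)}$. For any Borel probability $\sigma$ supported on $K_f$, consider the nonnegative mean-one martingale
\[
Y_n(\sigma)=\int_{K_f}\prod_{k=1}^n\frac{\mathbbm{1}_{\{x\notin I_k\}}}{\mathbb{P}(x\notin I_k)}\,d\sigma(x);
\]
independence of the $\omega_k$ together with the intersection bound above gives
\[
\mathbb{E}[Y_n(\sigma)^2]\leq C\iint_{K_f\times K_f}\exp\!\Bigl\{m_f\sum_{k=1}^n(\ell_k-|x-y|)_+\Bigr\}\,d\sigma(x)\,d\sigma(y),
\]
i.e.\ a truncated $\Phi^{(m_f)}$-energy. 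Hypothesis \eqref{cond:kahane^m} says exactly that no probability on $K_f$ has finite $\Phi^{(m_f)}$-energy, and a Kahane-type dichotomy then forces $Y_n(\sigma)\to 0$ a.s.\ for every $\sigma$; a Choquet capacitability / compactness argument should promote this to the statement that the random closed set of points in $K_f$ which are covered only finitely often has zero $\Phi^{(m_f)}$-capacity and is therefore empty a.s. Combined with the previous step, this gives $\mathbb{T}\subset\varlimsup_n I_n$ a.s.

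The main technical obstacle I anticipate is the last implication: passing from "$Y_n(\sigma)\to 0$ for every $\sigma$" to "every point of $K_f$ is covered infinitely often". Making this rigorous demands the precise Kahane-type dichotomy tailored to the nonstandard kernel $\Phi^{(m_f)}$, together with a uniform control of the near-diagonal tail $\sum_{k>n}(\ell_k-|x-y|)_+$ as $|x-y|\to 0$, so that the truncated second-moment energies really reflect the full $\Phi^{(m_f)}$-energy. A secondary delicate point is the coupling/monotonicity used in the first step: one must argue that the localized uniform-density comparison really produces covering of a genuine ball (rather than merely Lebesgue-a.e. covering), and that the countably many almost-sure events from the two steps can be intersected into a single full-probability event of $\mu_f$-Dvoretzky covering of $\mathbb{T}$.
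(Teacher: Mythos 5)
You should first note that the paper does not prove this proposition at all: it is quoted verbatim from \cite{Fan-Karagulyan2021}*{Proposition 5.2}, so your sketch has to stand on its own. Its overall architecture does match the cited result: splitting $\mathbb{T}$ into $K_f$ and its complement, covering the complement from \eqref{cond:shepp^a} via the local bound $\mathrm{ess}\inf_{B(x,2\delta(x))}f\geq a(x)>m_f$ (this is essentially Fan--Karagulyan's Proposition 5.1, and your localized Shepp/coupling argument for that half is plausible modulo routine details about the random thinning), and covering $K_f$ from \eqref{cond:kahane^m}.

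The treatment of $K_f$, however, contains a concrete error and a structural gap. The error: the claimed bound $\E[Y_n(\sigma)^2]\leq C\iint \exp\{m_f\sum_{k\leq n}(\ell_k-|x-y|)_+\}\,d\sigma\,d\sigma$ would require an \emph{upper} bound $\mu_f(B(x,r_k)\cap B(y,r_k))\lesssim m_f(\ell_k-|x-y|)_+$, whereas the inequality you actually derive from $f\geq m_f$ is the \emph{lower} bound, which yields a lower bound on the second moment. No such upper bound follows from the definition of $K_f$: the essential infimum controls $\mu_f$ only from below, and $f$ may be large on a positive proportion of every small ball meeting $K_f$; obtaining even an approximate upper bound of this kind (with $m_f+\varepsilon$, only on a positive-measure subset of $\{f\leq m_f+\varepsilon\}$, via Lebesgue differentiation, Egorov, and the tangential-kernel analysis) is exactly the hard content of Lemma \ref{Lmm-main} of this paper, and there it serves the \emph{necessity} direction (Proposition \ref{mainProp:DtoS}), not covering. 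The structural gap: the $L^2$-bounded-martingale machinery you invoke is the tool for proving \emph{non}-covering --- it is precisely the local Billard criterion, Proposition \ref{prop:loc_B} (finite energy plus a second-moment bound gives positive probability of an uncovered point). To deduce covering from $\mathrm{Cap}_{m_f}(K_f)=0$ you need the converse implication: if with positive probability some point of $K_f$ is uncovered, then $K_f$ supports a measure of finite $\Phi^{(m_f)}$-energy. Your final step ``$Y_n(\sigma)\to 0$ for every $\sigma$, hence the uncovered set has zero capacity and is therefore empty'' does not work as stated: zero capacity does not imply emptiness ($K_f$ itself is a nonempty zero-capacity set by hypothesis), and $Y_n(\sigma)\to 0$ for each fixed deterministic $\sigma$ says nothing about random uncovered points carried on $\sigma$-null sets. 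What is missing is either a Frostman-type construction of a finite-energy measure on the random uncovered set, or --- the route actually suggested by $f\geq m_f$ --- a comparison/coupling principle reducing covering of $K_f$ under $\mu_f$ to Kahane's classical covering theorem for the uniform model with lengths $m_f\ell_n$ (note that $\mathrm{Cap}_{m_f}(K_f)=0$ is exactly the classical Kahane condition for the rescaled set $m_fK_f$ and lengths $m_f\ell_n$, as in the proof of Theorem \ref{mainThm:ub_dvoretzky-hawkes}-(b)); such a comparison principle is the substance of the Fan--Karagulyan proof and is the piece your proposal leaves unproved.
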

It is not known if any of the conditions above is necessary for covering. In the next section we will prove the necessity of condition \ref{cond:shepp^a} for covering.

\addtocontents{toc}{\protect\setcounter{tocdepth}{2}}
\section{Proof of Theorem \ref{mainThm:ub_dvoretzky-hawkes}}


In this section, we prove the following result which gives a necessary condition of Shepp type for non-uniform Dvoretzky covering of $\mathbb{T}$. 

\begin{proposition}\label{mainProp:DtoS}
Assume for a sequence $\{\ell_n\}_{n\in \mathbb{N}}$ there is $\mu_f$-Dvoretzky covering for a probability density function $f$. 
Then the following condition is necessary for covering
\begin{equation*}
    \text{for all } a >m_f,\quad \sum_{n=1}^\infty \frac{1}{n^2}e^{a(\ell_1+\dots + \ell_n)}=\infty.
\end{equation*}
\end{proposition}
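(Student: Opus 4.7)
I would prove the contrapositive: assume that for some $a > m_f$ the Shepp-type sum $\sum_n n^{-2}e^{a(\ell_1+\cdots+\ell_n)}$ is finite, and exhibit, with positive probability, a point of $\mathbb{T}$ never covered by any $I_n$. Since $a>m_f$, the set $\{x\colon f(x)<a\}$ has positive Lebesgue measure, so by the Lebesgue differentiation theorem together with Egoroff's theorem I may select a subset $J$ of positive Lebesgue measure contained in that set and a radius $r_\ast > 0$ such that
\[
\mu_f(B(x,r_k)) \leq a\,\ell_k \quad \text{for all } x \in J \text{ and all } k \text{ with } r_k < r_\ast;
\]
equivalently, the convolution $f \ast \psi_r$ of \eqref{estm} is bounded pointwise by $a$ on $J$ for all sufficiently small $r$. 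The finitely many $k$ with $r_k \geq r_\ast$ contribute only a uniformly bounded correction factor, which I suppress below.

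The core tool is the Paley--Zygmund inequality applied to $L_n = |J \setminus \bigcup_{k=1}^n I_k|$, the Lebesgue measure of the uncovered part of $J$ after $n$ steps. Using the previous bound,
\[
\mathbb{E}[L_n] = \int_J \prod_{k=1}^n (1-\mu_f(B(x,r_k)))\, dx \;\gtrsim\; |J|\exp\!\left(-a\sum_{k=1}^n\ell_k\right).
\]
For the second moment, the set-theoretic identity
\[
\mu_f(B(x,r_k) \cup B(y,r_k)) = \mu_f(B(x,r_k)) + \mu_f(B(y,r_k)) - \mu_f(B(x,r_k) \cap B(y,r_k))
\]
together with the pointwise control of $f \ast \psi_{|x-y|}$ on $J$---which by definition equals $\bigl(\sum_k \mu_f(B(x,r_k)\cap B(y,r_k))\bigr)/\bigl(\sum_k(\ell_k-|x-y|)_+\bigr)$---inserted into $\prod(1-\cdot)\leq \exp(-\sum\cdot)$, yields
\[
\mathbb{E}[L_n^2] \;\lesssim\; \int_J\!\int_J \exp\!\left(-2a\sum_{k=1}^n\ell_k + a\sum_{k=1}^n(\ell_k - |x-y|)_+\right) dx\, dy.
\]

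A standard decomposition of $J \times J$ according to the largest index $k$ with $\ell_k > |x-y|$ bounds the last double integral by a constant multiple of $(\mathbb{E}[L_n])^2 \cdot \sum_n n^{-2} e^{a(\ell_1+\cdots+\ell_n)}$, which is finite by hypothesis; Paley--Zygmund then produces $\mathbb{P}(L_n > 0) \geq c > 0$ uniformly in $n$. Since the events $\{L_n > 0\}$ are nested decreasingly, their intersection has probability at least $c$; on this intersection each closed set $\overline{J} \setminus \bigcup_{k \leq n} I_k$ is nonempty, so by compactness they share a common point $x \in \mathbb{T}$ belonging to no $I_n$, contradicting $\mu_f$-Dvoretzky covering. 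The main obstacle is the very first step: obtaining uniform pointwise control of $f \ast \psi_r$ on a set of positive measure. Lebesgue differentiation only gives a.e.\ convergence, and the paper explicitly emphasises that $f \ast \psi_r \to f$ is tangential in nature and can diverge, so this uniform bound has to be established as a separate harmonic-analytic result---precisely the material the paper advertises as ``of independent interest''.
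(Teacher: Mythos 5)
Your overall strategy---a second-moment/Paley--Zygmund argument showing that a set of positive measure near the essential infimum survives uncovered with positive probability---is exactly the skeleton of the paper's proof; what you call the Paley--Zygmund step is what the paper packages as the local Billard criterion (Proposition \ref{prop:loc_B}, cited from \cite{Fan-Karagulyan2021}), and your first-moment bound via Lebesgue differentiation plus Egoroff is the same as the paper's derivation of \eqref{ineq:F_0ball}. But there is a genuine and substantial gap: your entire second-moment estimate hinges on the claim that on a positive-measure set $J$ one can make $f\ast\psi_r$ uniformly $\le a$ for \emph{all} small $r$, and this is precisely what cannot be done. You flag this yourself at the end, but you underestimate the obstruction: it is not a harmonic-analytic lemma that can be ``established as a separate result'' and plugged into your outline. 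The paper's Proposition \ref{qtn1} and the surrounding discussion show that tangential convergence of $f\ast\psi_r$ fails exactly when the sequence $\{\ell_n\}$ has high local concentration, and such concentration is not ruled out by the hypothesis. The paper's response is not to prove your uniform bound but to change strategy entirely: it replaces $\psi_r$ with the discretized kernel $\phi_r$ built from the levels $\ell_n'=b^{q+1}$, introduces the counting sequence $\nu_q$, and splits the scales $r$ into three regimes (Lemma \ref{Lmm-main}). In the two ``non-concentrated'' regimes (Cases 1 and 2) it gets the bound $g\ast\phi_r \le m_f+\varepsilon_0$; in the ``concentrated'' regime (Case 3) no such bound is available, and instead the finiteness of the Shepp sum \eqref{contra-shepp^a} is used via Sublemmas \ref{lambda} and \ref{nu} to show $\nu_K b^K\lesssim K$, which yields a weaker but summable bound $g\ast\phi_r\le(K/2)\ln(1/b)/\sum\nu_k(b^k-r)_+$ for those exceptional $r$. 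This Case-3 machinery---using the hypothesis on the Shepp sum to control the concentration $\nu_K$, and then showing that the exceptional scales contribute a convergent piece to the double integral---is the actual content of the proposition, and your proposal contains no trace of it.

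There is also a smaller bookkeeping wrinkle in your sketch that is worth being honest about. Your second-moment upper bound $\mathbb{E}[L_n^2]\lesssim\iint\exp(-2a\sum\ell_k+a\sum(\ell_k-|x-y|)_+)$ requires a \emph{lower} bound $\mu_f(B(x,r_k))\ge a\ell_k$ on $J$, whereas your Egoroff step only gives the \emph{upper} bound $\mu_f(B(x,r_k))\le a\ell_k$. Since a.e.\ $f\ge m_f$, the generic lower bound is $m_f\ell_k$, and the discrepancy $e^{2(a-m_f)\sum\ell_k}$ does not stay bounded as $n\to\infty$. The paper sidesteps this by (i) working with the modified density $g$ of \eqref{times}, which satisfies $g=m_g$ exactly on $F$ and is close to $m_g$ in the Egoroff sense, and (ii) quoting the local Billard criterion, whose hypotheses ($\sup_t\sum\mu(B(t,r_n))^2<\infty$ plus finiteness of the energy integral) absorb these corrections cleanly. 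If you wanted to carry out the Paley--Zygmund by hand, you would need to do the same factorization $\prod(1-\mu(B(t)\cup B(s)))=\prod(1-\mu(B(t)))\prod(1-\mu(B(s)))\cdot\prod(\ldots)$ and show the ratio equals the energy integral up to a bounded factor, rather than use the rougher one-sided bounds as you wrote.
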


\subsection{Almost sure convergence for convolution operators and approximate identities} \label{discussion}

We now discuss the main idea of the proof of Proposition \ref{mainProp:DtoS}. 
First, recall the following local version of Billard's criterion from \cite{Fan-Karagulyan2021}: 

\begin{proposition}[Local Billard criterion]\cite{Fan-Karagulyan2021}*{Theorem 3.1} \label{prop:loc_B}
Let $\{\ell_n\}_{n\in \mathbb{N}}$ be a decreasing sequence of positive numbers with $0<\ell _n<1$, and $F \subset \mathbb{T}$ be a non-empty compact set. 
Suppose that $\mu$ is a probability measure on $\mathbb{T}$ such that
\begin{equation}\label{eq:cond-noncover1}
    \sup_{t \in F} \sum_{n=1}^\infty \mu(B(t, r_n))^2 <\infty ,
\end{equation}
where $r_n=\ell_n/2$. 
Then for $F$ there is no $\mu$-Dvoretzky covering if there exists a probability measure $\sigma$ supported on $F$ such that 
\begin{equation} \label{eq:cond-noncover2}
    \int_{F} \int_{F} \exp\left\{ \sum_{n=1}^\infty \mu(B(t, r_n)\cap B(s,r_n))\right\} \, d\sigma(t)d\sigma(s) <\infty.
\end{equation}
\end{proposition}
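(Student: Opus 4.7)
The proof is a version of the classical Billard--Kahane second moment method, adapted to take the measure $\mu$ and the set $F$ into account. The strategy is to build a nonnegative martingale $M_n$ whose $L^2$-boundedness forces, with positive probability, the existence of a point of $F$ that is covered only finitely often (indeed, never covered), contradicting $\mu$-Dvoretzky covering of $F$.

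\textbf{Step 1 (the martingale).} For $t\in F$ set $Y_n(t)=\mathbbm{1}[\,t\notin I_1\cup\dots\cup I_n\,]$ and
\[
\phi_n(t)=\frac{Y_n(t)}{\prod_{k=1}^{n}(1-\mu(B(t,r_k)))},\qquad M_n=\int_F \phi_n(t)\,d\sigma(t).
\]
Because $\{\omega_k\}$ are i.i.d.\ with law $\mu$, one has $\mathbb{E}[Y_n(t)\mid\mathcal F_{n-1}]=Y_{n-1}(t)(1-\mu(B(t,r_n)))$, which readily gives that $(M_n)$ is a nonnegative $(\mathcal F_n)$-martingale with $\mathbb{E}[M_n]=\sigma(F)=1$. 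Note that $\phi_n$ is well defined eventually, since hypothesis \eqref{eq:cond-noncover1} forces $\mu(B(t,r_n))\to 0$ as $n\to\infty$ for every $t\in F$ (in particular $\mu$ has no atoms in $F$), so the denominator is eventually bounded away from $0$.

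\textbf{Step 2 (the $L^2$ bound).} A direct computation, using independence of the $\omega_k$ and the set identity $1-\mu(A\cup B)=(1-\mu(A))(1-\mu(B))+\mu(A\cap B)-\mu(A)\mu(B)$, gives
\[
\mathbb{E}[M_n^{2}]=\int_F\int_F \prod_{k=1}^{n}\left(1+\frac{\mu(B(t,r_k)\cap B(s,r_k))-\mu(B(t,r_k))\mu(B(s,r_k))}{(1-\mu(B(t,r_k)))(1-\mu(B(s,r_k)))}\right)d\sigma(t)\,d\sigma(s).
\]
Applying $1+x\le e^{x}$ and dropping the nonpositive $-\mu(A)\mu(B)$ term in the numerator, the product is bounded by
\[
\exp\!\left(\sum_{k=1}^{\infty}\frac{\mu(B(t,r_k)\cap B(s,r_k))}{(1-\mu(B(t,r_k)))(1-\mu(B(s,r_k)))}\right).
\]
The key point is to replace the denominators by $1$ at the cost of a uniformly bounded multiplicative constant. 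Writing $1/[(1-a)(1-b)]=1+\eta(a,b)$ with $0\le \eta(a,b)\le C(a+b)$ whenever $a,b\le 1/2$, and splitting each sum into the (finitely many, uniformly in $t,s$ by \eqref{eq:cond-noncover1}) indices where $\mu(B(t,r_k))>1/2$ or $\mu(B(s,r_k))>1/2$ and the remaining ones, we obtain
\[
\mathbb{E}[M_n^{2}]\le C_1\int_F\int_F \exp\!\left(\sum_{k=1}^{\infty}\mu(B(t,r_k)\cap B(s,r_k))\right)\exp\!\left(C_2\sum_{k=1}^{\infty}\mu(B(t,r_k))\mu(B(s,r_k))\right)d\sigma\,d\sigma,
\]
where the last exponent is uniformly bounded thanks to \eqref{eq:cond-noncover1} and Cauchy--Schwarz. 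By \eqref{eq:cond-noncover2} the remaining double integral is finite, so $\sup_n\mathbb{E}[M_n^{2}]<\infty$.

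\textbf{Step 3 (conclusion).} Being a nonnegative $L^2$-bounded martingale, $M_n$ converges in $L^2$ to some $M_\infty$ with $\mathbb{E}[M_\infty]=1$, so $P(M_\infty>0)>0$. On this event, consider the random measures $d\sigma_n:=\phi_n\,d\sigma$ on $F$: they are nonnegative with uniformly bounded total mass $M_n$, and $\sigma_n$ is supported on the closed set $F_n:=F\setminus\bigcup_{k\le n}I_k$. Passing to a weakly convergent subsequence yields a limit measure $\sigma_\infty$ with $\sigma_\infty(F)\ge M_\infty>0$ and $\operatorname{supp}\sigma_\infty\subset\bigcap_n F_n=F\setminus\bigcup_n I_n$. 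Thus with positive probability there exists a point of $F$ that is not contained in any $I_n$, hence a fortiori not contained in infinitely many $I_n$, so $F\ne\varlimsup I_n$ and $F$ is not $\mu$-Dvoretzky covered.

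\textbf{Expected main obstacle.} The only delicate step is the $L^2$ estimate of Step~2: one must absorb the factors $(1-\mu(B(t,r_k)))^{-1}$ uniformly in $t,s\in F$ in order to isolate the clean kernel $\exp(\sum_k \mu(B(t,r_k)\cap B(s,r_k)))$ controlled by \eqref{eq:cond-noncover2}. The splitting based on the (uniformly finite, by \eqref{eq:cond-noncover1}) number of indices for which $\mu(B(\cdot,r_k))>1/2$ is exactly what makes \eqref{eq:cond-noncover1} and \eqref{eq:cond-noncover2} play complementary roles.
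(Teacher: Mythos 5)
Your proposal is the classical Billard second-moment argument (the route taken in the cited reference; the present paper only quotes the result), and the skeleton — the normalized non-covering martingale $M_n$, the product formula for $\mathbb{E}[M_n^2]$ via $1-\mu(A\cup B)=(1-\mu(A))(1-\mu(B))+\mu(A\cap B)-\mu(A)\mu(B)$, the $L^2$ bound from \eqref{eq:cond-noncover2} plus Cauchy--Schwarz on $\sum_k\mu(B(t,r_k))\mu(B(s,r_k))$ with \eqref{eq:cond-noncover1}, and the weak-limit/compactness conclusion — is correct.

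The one genuine gap is your treatment of the finitely many indices with $\mu(B(t,r_k))>1/2$, both for the well-definedness of $\phi_n$ and for the constant $C_1$ in Step 2. Hypothesis \eqref{eq:cond-noncover1} gives no lower bound on $1-\mu(B(t,r_k))$ for small $k$: it is perfectly consistent with $\mu(B(t,r_1))=1$ on part of $F$ (then $\phi_n$ is $0/0$ there), or with $\sup_{t\in F}\mu(B(t,r_k))=1$ for a few $k$, in which case the factors $\bigl((1-\mu(B(t,r_k)))(1-\mu(B(s,r_k)))\bigr)^{-1}$ for those "bad" indices are not uniformly bounded in $(t,s)\in F\times F$, so they cannot simply be absorbed into a multiplicative constant $C_1$; knowing that there are only boundedly many such indices does not bound their size. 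The standard repair is not to bound these factors but to remove them: since $\{\ell_n\}$ is decreasing, $\mu(B(t,r_n))$ is non-increasing in $n$, so \eqref{eq:cond-noncover1} gives $n\,\mu(B(t,r_n))^2\le C$, i.e. $\sup_{t\in F}\mu(B(t,r_n))\le\sqrt{C/n}$; choose $N_0$ with $\sup_{t\in F}\mu(B(t,r_n))\le 1/2$ for all $n\ge N_0$ and run the entire argument with the tail family $\{I_n\}_{n\ge N_0}$ only (products and sums over $k\ge N_0$). Discarding finitely many intervals does not affect the event $F\subset\varlimsup_n I_n$, and a point of $F$ missed by all $I_n$, $n\ge N_0$, is covered only finitely often, so the conclusion is unchanged; with this modification every denominator is at least $1/4$, your bound $\eta(a,b)\le C(a+b)$ applies to all retained indices, and the rest of your Steps 2 and 3 go through verbatim.
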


For $\ell_{n+1}< r\leq \ell_n$, let
\[
A_r(u)=\frac{\sum_{k=1}^\infty \mathbbm{1}_{I_{k,r}}(u+ \frac{r}{2})}{\sum_{k=1}^\infty (\ell_k-r)_+},
\]
where
\[
I_{k,r}=\left[\frac{-\ell_k+r}{2},\frac{\ell_k-r}{2}\right). 
\]
Note that
$
\int_\mathbb{T}\psi_{r}(u)\, du=1$
and for every $\delta>0$ we have that
\[
\lim_{r \rightarrow 0}\int_{-\delta}^{\delta}\psi_{r}(u)\, du=1.
\]
Thus $\psi_r$ is an \emph{approximate identity}. Note that $\psi_r(u)$ is symmetric around the point $u=-r/2$ and its mass is mostly concentrated at that point.
Observe now that if $s<t$ and $r=|t-s|$, then
\[
\frac{\sum_{n=1}^\infty \mu_f(B(t, r_n)\cap B(s,r_n))}{\sum_{n=1}^\infty (\ell_n-|t-s|)_+}=\int_\mathbb{T}f(s-t)\psi_r(t)\, dt = f\ast \psi_r(s).
\]

If one now has that $f \ast \psi_r \rightarrow f(s)$ for almost every $s$ as $r \rightarrow 0$, then for the set $F \subset \mathbb{T}$, where $f(s) \leq m_f + \varepsilon$, we will have by some additional work that
\begin{multline*}
     \int_{F} \int_{F} \exp \left\{ \sum_{n=1}^\infty \mu_f(B(t, r_n)\cap B(s,r_n))\right\} \, dtds \\ 
     \leq C\int_{F} \int_{F}\exp\left\{(m_f + \varepsilon)\left(\sum_{k=1}^\infty \ell_k-|t-s|\right)\right\}\, dtds.
\end{multline*}

The last estimate, in fact, implies Proposition \ref{mainProp:DtoS}. However, as it will be seen from the discussion below, the operators $f \ast \psi_r$ can exhibit rather erratic behaviour and for general sequences $\{\ell_n\}_{n \in \mathbb{N}}$ it is not easy to establish for which sequences $r_k \rightarrow 0$ does one have $f \ast \psi_{r_k}(s)\rightarrow f$. Hence, it is not clear how to estimate the value of $f \ast \psi_r$. 

We remark that 
 there is a large literature devoted to the study of convergence properties of convolution operators $f\ast \psi_r(s)$ and $\psi_r(s)$ in the scope of kernels studied in these papers. We refer the reader to  \cite{Kostyukovsky-Olevskii2004} for an overview of results on convergence of convolution operators both for absalutely continuous and singular measures and also to \cites{Karagulyan-Safaryan2017, Karagulyan-Safaryan2015} for some further references and recent results. For discrete measures, we recall the classical result of Bourgain \cite{Bourgain1988}, where divergence is shown for discrete measures. Therefore, understanding the convergence properties of $f \ast \psi_r$ is a problem of independent interest: 

\begin{proposition}\label{qtn1}
Assume that
\begin{equation}\label{L11}
\limsup_{n \rightarrow \infty}\frac{n \ell_n}{ \ell_1 + \cdots + \ell_n}<1.
\end{equation}
Then for every $f \in L^1$ we have
$$
f \ast \psi_r(s)\rightarrow f(s), \text{ as }r \rightarrow 0,
$$ 
for almost every $s \in \mathbb{T}$.
\end{proposition}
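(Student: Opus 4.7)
I would establish the pointwise maximal inequality
\[
\sup_{0<r<r_{0}}\bigl|f\ast\psi_{r}(s)\bigr|\;\le\; C\,Mf(s),\qquad s\in\mathbb{T},
\]
where $Mf$ denotes the Hardy--Littlewood maximal function and $C$ depends only on the $\limsup$ in \eqref{L11}. Granted this, the almost sure convergence follows by the standard two-step scheme: the kernel $\psi_{r}$ is an approximate identity (the excerpt already records $\int\psi_{r}=1$ and $\int_{|u|\ge\delta}\psi_{r}\to 0$ for each $\delta>0$), so $f\ast\psi_{r}\to f$ \emph{uniformly} on the dense subclass of continuous functions in $L^{1}(\mathbb{T})$, and the weak type $(1,1)$ of $M$, combined with the above domination, transfers the convergence to every $f\in L^{1}(\mathbb{T})$.

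The heart of the proof is the maximal bound, which I would obtain by an explicit computation. For $\ell_{n+1}<r\le\ell_{n}$, unfolding the definition of $\psi_{r}$ and changing variables yields the intrinsic representation
\[
f\ast\psi_{r}(s)\;=\;\frac{1}{D_{n}}\sum_{k=1}^{n}\int_{I_{k}(s)}f(v)\,dv,
\quad
I_{k}(s):=\bigl[s-\tfrac{\ell_{k}}{2}+r,\;s+\tfrac{\ell_{k}}{2}\bigr],
\quad
D_{n}:=\sum_{k=1}^{n}(\ell_{k}-r).
\]
Since $r\le\ell_{k}$, an inspection of the endpoints shows $I_{k}(s)\subset B(s,\ell_{k})$, hence $\int_{I_{k}(s)}|f|\le 2\ell_{k}\,Mf(s)$ by the definition of $Mf$. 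Summing over $k$ produces
\[
|f\ast\psi_{r}(s)|\;\le\;\frac{2\,Mf(s)}{D_{n}}\sum_{k=1}^{n}\ell_{k}\;=\;\frac{2L_{n}}{D_{n}}\,Mf(s),\qquad L_{n}:=\ell_{1}+\cdots+\ell_{n}.
\]
Condition \eqref{L11} is what makes the ratio $L_{n}/D_{n}$ uniformly bounded: choose $\lambda<1$ and $N_{0}$ with $n\ell_{n}\le\lambda L_{n}$ for $n\ge N_{0}$; since $r\le\ell_{n}$, we get $nr\le\lambda L_{n}$, whence $D_{n}=L_{n}-nr\ge (1-\lambda)L_{n}$, and the displayed bound gives $C=2/(1-\lambda)$ for all $r<\ell_{N_{0}}=:r_{0}$ (the finitely many values $r\ge r_{0}$ correspond to a bounded family of kernels and play no role in the limit $r\to 0$).

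\textbf{Expected obstacle.} The real subtlety, as flagged in the discussion preceding the proposition, is the drift of $\psi_{r}$ around $-r/2$: because of it the kernel is genuinely one-sided, and a naive symmetric dominator fails whenever many $\ell_{k}$ cluster near $r$, which is the tangential regime where divergence typically occurs. Hypothesis \eqref{L11} is precisely what rules out such clustering, by forcing $D_{n}$ to be a definite fraction of $L_{n}$; the constant $2/(1-\lambda)$ degenerates exactly as \eqref{L11} fails, so the argument is essentially sharp within this maximal-function framework, consistent with the divergence phenomena for tangentially approaching kernels.
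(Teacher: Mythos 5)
Your proof is correct, but it takes a genuinely different route from the paper's. The paper disposes of this proposition by citing \cite{Karagulyan-Safaryan2015}*{Theorem 2.1}, which asserts almost-everywhere convergence of $f\ast\psi_r$ whenever $\limsup_{r\to 0}(r/2)\|\psi_r\|_\infty<\infty$, and then verifies that this hypothesis is equivalent to \eqref{L11} by noting that $\|\psi_r\|_\infty = n/D_n$ for $\ell_{n+1}<r\le\ell_n$. You instead give a self-contained argument: unfolding the convolution shows that each of the $n$ supporting intervals of $\psi_r$ contributes an integral of $f$ over an interval $I_k(s)$ that sits inside $B(s,\ell_k/2)$, so each term is at most $\ell_k\,Mf(s)$, and \eqref{L11} forces $D_n=L_n-nr\ge(1-\lambda)L_n$, giving the pointwise maximal bound $\sup_r|f\ast\psi_r(s)|\le (1-\lambda)^{-1}Mf(s)$ (your factor of $2$ is harmless but unnecessary, since $I_k(s)\subset B(s,\ell_k/2)$ already has measure $\ell_k$). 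The weak-$(1,1)$ transference from continuous functions then finishes. Your version essentially unpacks the mechanism hidden inside the cited theorem, which makes the role of \eqref{L11} transparent — it is exactly what keeps the normalizing mass $D_n$ a definite fraction of $L_n$ despite the drift of $\psi_r$ toward $-r/2$ — whereas the paper's proof is shorter but leaves that content opaque inside the reference.
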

\begin{proof}
In \cite{Karagulyan-Safaryan2015}*{Theorem 2.1}, it is shown that when $\limsup _{r \rightarrow 0} (r/2)\left\|\psi_{r}\right\|_{\infty}<\infty$ ($\lambda(r)=r/2$ is the location of the tip of $\psi_r$), then $f \ast \psi_r (s)$ converges to $f(s)$ almost surely. In our case this is equivalent to the existence of $C>0$ so that for all $\ell_{n+1}< r\leq \ell_n$ we have
\[
\frac{(r/2) n}{\sum_{k=1}^n (\ell_k - r)}<C.
\]
Since $r \leq \ell_n$, a quick computation shows that the above is equivalent to
\[
\limsup_{n \rightarrow \infty}\frac{n \ell_n}{\sum_{k=1}^n \ell_k}<1.
\]
\end{proof}

We remark that the necessity of condition \eqref{L11} is not completely trivial. In \cite{Karagulyan-Safaryan2017}*{ Theorem 1.1}, the authors show that the operators $f\ast\psi_r  (s)$ can be divergent almost surely for a characteristic function $f$ if
$$
\limsup _{\delta \rightarrow 0} \liminf _{r \rightarrow 1} \int_{\frac{r}{2} -\frac{\delta r}{2}}^{\frac{r}{2} +\frac{\delta r}{2}}\psi_r(s)\, ds>\frac{1}{2}.
$$

However this condition amounts to the assumption
$$
\lim_{n \to \infty}\frac{n \ell_n}{\ell_1+ \dots + \ell_n}=1.
$$
which is too strong and it fails if $\ell_n$ converge to zero. Thus, we see that the general problem of almost everywhere convergence properties of the operator $f \ast \psi_r$ is rather non-trivial. In our case one will in fact have
$$
\limsup _{\delta \rightarrow 0} \limsup_{r \rightarrow 1} \int_{\frac{r}{2} -\frac{\delta r}{2}}^{\frac{r}{2} +\frac{\delta r}{2}}\psi_r(s)\, ds=1.
$$
Therefore, we have the following open question:

\begin{question}
Assume for some $\{\ell_n\}$ we have 
$$
\sum_{n=1}^\infty \ell_n^2<\infty.
$$
Assume further that \eqref{L11} fails. Does there exists $f \in L^1$, so that $f \ast \psi_r(s)$ does not converge to $f$, as $r \rightarrow 0$?
\end{question}

We remark that although we are unable to show convergence for all sequences $r_k \rightarrow 0$, however the discussion above gives us some idea on where the volatile behavior of $f \ast \psi_r$ lies and how to treat the problem. The idea is to approximate $\psi_r$ with another kernel which is easier to analyze.

\subsubsection{Main Lemma and Proof of Proposition \ref{mainProp:DtoS}} \label{aux}

We now introduce an approximation $\phi_r$ of the kernel $\psi_r$ which allows us to isolate the set of $r$ where the behaviour of $\psi_r$ becomes too volatile, i.e. when there is huge concentration of the sequence $\{\ell_n\}$. 

Fix some $b<1$ very close to $1$ and for each $q \geq 0$ define
\[
\nu_q = \#\{n\in \mathbb{N} \colon b^{q+1}<\ell_n\leq b^q\}.
\]
Then define the sequence 
\[
\ell_n'=b^{q+1}
\]
if $\nu_0+\dots + \nu_q < n < \nu_0+\dots + \nu_{q+1}$. 
We also set $r_n'=\ell_n'/2$.
One can easily check that
\[
b\ell'_n\leq \ell_n \leq \ell'_n.
\]
We now replace $\ell_n$ with the sequence $\ell_n'$. 
Note that if there is $\mu_f$-Dvoretzky covering for the sequence $\{\ell_n\}_{n\in \mathbb{N}}$, then there will also be $\mu_f$-Dvoretzky covering for the sequence $\{\ell'_n\}_{n\in \mathbb{N}}$.
For $\varepsilon>0$ small we define the set
\[
F =\left\{ x\in \mathbb{T} \colon f(x)\leq m_f + \varepsilon\right\}.
\]
Then $|F|>0$. 
Next define a function
\begin{equation}\label{times}
g(x)=\begin{cases} 
f(x)  & \text{if}\quad x \notin F, \\
m_f + \varepsilon & \text{if}\quad x \in F.
\end{cases}
\end{equation}
Notice that $m_g=m_f +\varepsilon$. 

For all $s \in F$, let
\[
G_{n}(s):=\frac{\mu_{g}\left(B\left(s, r_{n}'\right)\right)}{\ell_{n}'}=\frac{1}{\ell_{n}'} \int_{s-r_{n}'}^{s+r_{n}'} g(x) \, d x. 
\]
By the Lebesgue differentiation theorem, one has $\lim _{n\to \infty }G_n(s)=g(s)$ for Lebesgue almost every $s\in F$. 
Then, by Egorov's theorem, there exists a compact set $F_0\subset F$ with $|F_0|>|F|/2>0$ such that $G_n$ converges to $g$ uniformly on $F_0$ as $n\to \infty$. 
Hence, there is an integer $N_0=N_0(\varepsilon)>1$ such that for every $n\geq N_0$ one has
\[
|G_n(s)-g(s)|<\varepsilon 
\]
for every $s\in F_0$. 
It follows that for $n\geq N_0$ and $s\in F_0$ one has 
\begin{equation} \label{ineq:F_0ball}
\mu_{g}\left(B(s, r_{n}')\right)<\left(g(s)+\varepsilon \right)\ell _n'=\left(m_f+2\varepsilon\right)\ell _n'.
\end{equation}

Let $\phi_r$ be the kernel defined in the preceding section associated with the sequence $\{\ell_n'\}_{n \in \mathbb{N}}$, or more specifically,  
\[
\varphi _r(u)=\frac{\sum_{n=1}^\infty \mathbbm{1}_{I_{n,r}'}\left(u+\frac{r}{2}\right)}{\sum_{n=1}^\infty (\ell_n'-r)_+} ,
\]
where 
\[
I_{k,r}'=\left[\frac{-\ell_n'+r}{2},\frac{\ell_n'-r}{2}\right) =\left[-r_n'+\frac{r}{2},r_n'-\frac{r}{2}\right) . 
\]
We then set $r=|t-s|$. 

\begin{lemma}\label{Lmm-main}
Let $c\in (b,1)$ and $\delta \in (0,1)$. 
Then for every $\varepsilon_0>0$ there exists $r_0>0$ so that 
\begin{enumerate}
    \item[1)] For all $r \leq r_0$, with $b^{K+1}< r \leq cb^{K}$, and for almost every $s \in F_0$ we have that 
    \[
    g\ast \phi_r(s) \leq  m_f + \varepsilon_0.
    \]
    
    \item[2)] For all $r \leq r_0$, with $b^{K+1}<r\leq  b^K$ and $K$ such that \begin{equation}\label{kernel} \frac{(\nu_0+\dots + \nu_K)b^K}{\sum_{k=0}^K \nu_k b^k}\leq\delta, 
    \end{equation} 
    then for almost every $s \in F_0$ 
    \[ 
    g\ast \phi_r(s) \leq m_f+ \varepsilon_0. 
    \]
    
    \item[3)] Assume that for some $a>0$ we have 
    \begin{equation} \label{contra-shepp^a}
    \sum_{n=1}^\infty \frac{1}{n^2}e^{a(\ell_1 + \dots + \ell_n)}<\infty. 
    \end{equation}
    Then the constants $c,\delta$ above, can be chosen in such a way that one can find a subset $F_1 \subset F_0$ so that for all $r \leq r_0$, with $cb^{K} < r \leq b^{K}$ and $K$ not full-filing \ref{kernel}, we have for a.e. $s \in F_1$ 
    \[ 
    g\ast \phi_r(s) \leq \frac{\frac{K}{2} \ln \frac{1}{b}}{\sum_{k=1}^\infty \nu_k(b^k-r)_+}. 
    \]
\end{enumerate} 
\end{lemma}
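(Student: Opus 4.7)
The plan is to write $g\ast \varphi_r(s)$ explicitly as a ratio of $\mu_g$-masses of nested intervals, and then bound the ratio via the Egorov-type inequality \eqref{ineq:F_0ball} combined with the pointwise lower bound $g\ge m_f$ almost everywhere.

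Assuming without loss of generality $t = s + r$, a direct computation (change of variables $v = u + r/2$ followed by $w = s + r/2 - v$) yields the representation
\[
g\ast \varphi_r(s) \;=\; \frac{\sum_{n=1}^{\infty}\mu_g\bigl(B(s+\tfrac{r}{2},\,r_n'-\tfrac{r}{2})\bigr)}{\sum_{n=1}^{\infty}(\ell_n'-r)_+},
\]
where only the layers $q=0,\ldots,K-1$ (those with $\ell_n'=b^{q+1}\geq r$) contribute. The key geometric observation is
\[
B(s+\tfrac{r}{2},\,r_n'-\tfrac{r}{2}) \;=\; [s+r-r_n',\,s+r_n'] \;\subset\; B(s,r_n'),
\]
with complement $[s-r_n',\,s+r-r_n')$ of Lebesgue length exactly $r$. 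Combining \eqref{ineq:F_0ball} (valid for $n\geq N_0$ and $s \in F_0$) with $\mu_g(A)\geq m_f|A|$, one obtains the pointwise estimate $\mu_g\bigl(B(s+\tfrac{r}{2},\,r_n'-\tfrac{r}{2})\bigr) \leq (m_f+2\varepsilon)\ell_n' - m_f r$. Summing over layers $q\leq K-1$ and absorbing the $O(1)$ contribution from $n<N_0$ into an $o(1)$ error (the denominator grows as $r\to 0$), we arrive at the master inequality
\[
g\ast \varphi_r(s) \;\leq\; m_f \;+\; \frac{2\varepsilon\,S}{S-rN_K} \;+\; o(1),\qquad S := \sum_{q=0}^{K-1}\nu_q b^{q+1},\quad N_K := \sum_{q=0}^{K-1}\nu_q.
\]

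For Cases 1 and 2, the proof reduces to bounding $S/(S-rN_K)$. In Case 1 the trivial inequality $b^K N_K\leq S$ (since $b^{q+1}\geq b^K$ on the range of the sum) together with $r\leq cb^K$ gives $rN_K\leq cS$, so $S/(S-rN_K)\leq 1/(1-c)$; choosing the initial $\varepsilon$ (in the definition of $F$) no larger than $(1-c)\varepsilon_0/2$ then delivers $g\ast\varphi_r(s)\leq m_f+\varepsilon_0$. Case 2 is entirely parallel once \eqref{kernel} is translated, via a one-step re-indexing relating $S$ to $\sum_k \nu_k b^k$, into a quantitative inequality of the form $rN_K\leq \delta' S$ with some $\delta'=\delta'(b,\delta)<1$. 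Case 3 is qualitatively different: when $r\in(cb^K,b^K]$ and \eqref{kernel} fails, a positive proportion of the mass of $\varphi_r$ is concentrated in the narrow last contributing layer, and no pointwise bound of the form $m_f+\varepsilon_0$ is possible. One trades the pointwise estimate for a global one: hypothesis \eqref{contra-shepp^a} is equivalent to $\ell_1+\cdots+\ell_n\leq (2/a)\ln n + O(1)$, which, through the layer partition, forces $\sum_{q=0}^{K}\nu_q b^{q+1}\lesssim K\ln(1/b)$; substituting into the master inequality but dropping the now useless $-m_f r$ correction produces the stated bound
\[
g\ast\varphi_r(s) \;\leq\; \frac{(K/2)\ln(1/b)}{\sum_k \nu_k(b^k-r)_+}
\]
on a subset $F_1\subset F_0$ cut out by a second Egorov-type truncation that makes the error terms uniform in $r$.

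\textbf{Main obstacle.} The crux is Case 3: the ``tip'' of $\varphi_r$ at $u=-r/2$ carries a substantial fraction of its mass, so controlling $g\ast\varphi_r(s)$ by $g$ at $s$ alone would in effect require controlling $g$ at $s+r/2$, which generically is not in $F_0$---this is precisely the tangential-direction obstruction highlighted in the introduction. Hypothesis \eqref{contra-shepp^a} is invoked not to recover a pointwise bound but to furnish a weaker, logarithmic-type global bound that is nevertheless strong enough to make the double integral in Kahane's criterion \eqref{eq:cond-noncover2} finite, thereby precluding covering when this scenario actually occurs.
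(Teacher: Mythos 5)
The reduction for Cases 1 and 2 is sound and essentially the paper's: you compute $\mu_g$ of the intersection interval, use the Egorov bound \eqref{ineq:F_0ball} from above and $g\geq m_g\geq m_f$ from below, and then show that $\varphi_r$'s excess mass over the centered kernel is a bounded fraction of the total. (Your Case 2 is admittedly sketchy on the ``re-indexing,'' but the idea matches the paper's use of $\sum_{k\leq K}\nu_k(b^k-r)_+\geq (1-\delta)\sum_{k\leq K}\nu_kb^k$.)

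The gap is in Case 3, and it is not cosmetic. You argue that the Shepp-type finiteness \eqref{contra-shepp^a} forces $\sum_{q\leq K}\nu_q b^{q+1}\lesssim K\ln(1/b)$, and then substitute this into the numerator to conclude the bound $\tfrac{K}{2}\ln\tfrac{1}{b}$. But the numerator you control is $(m_f+2\varepsilon)\sum_{q\leq K}\nu_qb^{q+1}+O(1)$, and the implied constant in ``$\lesssim$'' is roughly $2/a$ with $a>m_f$ close to $m_f$; the product is on the order of $2(m_f+2\varepsilon)/a\approx 2$, not $\tfrac12$ -- and, more fatally, it is bounded away from zero no matter how you choose $\varepsilon$, $c$, $\delta$. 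Since $b$ is fixed close to $1$ (so $\ln(1/b)$ is small) and the downstream argument needs the exponent's coefficient to be \emph{strictly less than} $\ln(1/b)$ in order for $\sum_K b^K\exp(\cdot)$ to converge, a constant like $2$ cannot be absorbed. In other words, ``drop the $-m_fr$ correction and use the logarithmic growth of partial sums'' does not produce a numerator bound of the required smallness.

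What makes the paper's Case 3 work is precisely what you skip: a splitting of the numerator into the layers $k<K$ and the last layer $K$, each of which is shown to contribute a term that is \emph{proportionally small}. First, the failure of \eqref{kernel} for $\delta$ near $1$ forces $\lambda_K=\nu_K/\sum_{k<K}\nu_k$ to be large (this is Sublemma \ref{lambda}; it is not a throwaway -- it converts ``concentration at layer $K$'' into ``the earlier layers carry a $1/\lambda_K$-fraction of the mass''). Combined with Sublemma \ref{nu} ($\nu_Kb^K\leq C_0K$ under Shepp-finiteness and $\lambda_K\geq1$), this bounds $\sum_{k<K}\nu_kb^k\lesssim (1/\lambda_K)C_0K$, with a prefactor you can drive to $0$. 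Second, for the last layer the intersection $B(t,r_n')\cap B(s,r_n')$ has \emph{length} $b^K-|t-s|\leq(1-c)b^K$, and the paper replaces the crude bound $\mu_g(B(s,r_n'))\leq(m_f+2\varepsilon)\ell_n'$ by a Lebesgue-differentiation/Egorov estimate on the shrinking interval centered at $(t+s)/2$ (Sublemma \ref{lebesgue}), yielding $\approx m_g(b^K-|t-s|)\leq 2m_g(1-c)b^K$ on $F_1$. The crucial point is the extra $(1-c)$ factor, which you can also drive to $0$ by taking $c\to1$. Putting both together gives a numerator bound $C_2\{(1/\lambda_K+\zeta)+(1-c)\}K$ whose bracket can be made as small as needed, which is how the paper reaches $\tfrac{K}{2}\ln\tfrac1b$. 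Without the $\lambda_K$ argument and the last-layer Lebesgue-point refinement, the $\tfrac12$ (indeed any constant $<1$) is out of reach, and the subsequent convergence of $\sum_K b^K\exp(\cdot)$ collapses.
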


\begin{remark}
One can compare condition \eqref{kernel} with \eqref{L11} and see that $\limsup$ in \eqref{L11} is achieved at times $r=b^k$, $k=1,2, \dots$. Using  ideas similar to those in the proof of Lemma \ref{Lmm-main} one can in fact show that when $r \rightarrow 0$, subject to the conditions in $1)$ or $2)$, we have that $g\ast \phi_r(s) \rightarrow g(s)$, for almost every $s \in \mathbb{T}$ and any $g \in L^1$. We believe that one can then transfer the same property to the kernel $\psi_r$. However, we prove Lemma 1 under weaker assumptions since it is sufficient for our purposes. We do not know if there is convergence for $r$ in case 3).
\end{remark}

We postpone the proof of this lemma and conclude  the proof of Proposition \ref{mainProp:DtoS}. 
Note that it now follows from Lemma \ref{Lmm-main} that for arbitrary $\epsilon>0$ and $c$ close to $1$ we have 
\begin{align*}
    &\int \int_{\{ (t,s)\in F_0 \times F_0\colon b^{K+1}<|t-s|\leq cb^K\}}\exp \left\{\sum_{n=1}^\infty \mu_g(B(t, r_n')\cap B(s,r_n'))\right\}\, dtds \\
    &\quad \leq C_{\varepsilon, c} \int \int_{\{ (t,s)\in F_0 \times F_0\colon b^{K+1}<|t-s|\leq cb^K\}}\exp\left\{(m_g + \epsilon)\sum_{k=0}^\infty \nu_k (b^k - |t-s|)_+\right\}\, dtds,
\end{align*}
for some $C_{\varepsilon, c}\in (0,\infty)$.
And also
\begin{equation} 
    \begin{split}
    &\iint_{\{(t,s)\in F_0 \times F_0\colon b^{K+1}<|t-s|\leq b^K, \eqref{kernel} \text{ holds for }K\}}\exp\left\{\sum_{n=1}^\infty \mu_g(B(t, r_n')\cap B(s,r_n'))\right\}\, dtds \\
    &\leq C_{\varepsilon, \delta} \iint_{\{(t,s)\in F_0 \times F_0\colon b^{K+1}<|t-s|\leq b^K, \eqref{kernel} \text{ holds for }K\}}\exp\left\{(m_g + \epsilon)\sum_{k=0}^\infty \nu_k (b^k - |t-s|)_+\right\}\, dtds
    \end{split}
\end{equation}
for some $C_{\varepsilon, \delta}\in (0,\infty)$.

For each $t\in F_1$ and $K\in \mathbb{N}$, let $F_1(t,K)=\{ s\in F_1\colon cb^K<|t-s|\leq b^K\} $, and  $F_1(t)=\cup _{K\notin \mathcal{G}}F_1(t,K)$, where $\mathcal{G}$ is the set of $K\in \mathbb{N}$ such that the condition \eqref{kernel} is fulfilled for some $\delta >0$.  
Then by the Fubini theorem, 
\begin{align*}
    &\iint_{\{(t,s)\in F_1 \times F_1\colon cb^{K}<|t-s|\leq b^K, K\not\in \mathcal{G}\}} \exp\left\{ \sum_{n=1}^\infty \mu_g(B(t, r_n')\cap B(s,r_n'))\right\} \, dtds \\
    &\quad \leq \int_{F_1}\left[ \int_{F_1(t)} \exp\left\{ \sum_{n=1}^\infty \mu_g(B(t, r_n')\cap B(s,r_n'))\right\} \, ds\right] dt \\
    &\quad \leq \sum _{K\notin \mathcal{G}} \int _{F_1}\left[ \int_{F_1(t,K)}\exp  \left( \frac{K}{2} \ln \frac{1}{b}\right) \, ds \right]\, dt \\
    &\quad \leq 2(1-c)|F_1|\sum_{K} b^{K}\exp \left( \frac{K}{2} \ln \frac{1}{b}\right) =2(1-c)|F_1|\sum_{K}\exp \left( \frac{K}{2}\ln \frac{1}{b}-K \ln \frac{1}{b}\right) <\infty
\end{align*}
as $|F_1(t,K)|\leq 2(1-c)b^K$. 

Summarizing the above, one has 
\begin{equation} \label{ineq:conclusion}
    \begin{split}
    &\iint_{F_1\times F_1} \exp\left\{\sum_{n=1}^\infty \mu_g(B(t, r_n')\cap B(s,r_n'))\right\} \, dtds \\
    &\quad \leq C \iint_{F_1\times F_1} \exp\left\{ (m_g + \varepsilon)\sum_{n=1}^\infty (\ell_n'-|t-s|)_+\right\} \, dtds 
    \end{split}
\end{equation}
for some $C\in (0,\infty )$ if \eqref{contra-shepp^a} is assumed. 
(Obviously if no such $a>0$ exists then the result in Proposition \ref{mainProp:DtoS} follows.) 
On the other hand, under the same assumption, we have 
\[
\sup _{s\in F_1}\sum _{n=1}^\infty \mu_g(B(s,r_n'))^2\leq (m_f+2\varepsilon)^2\sum _{n=1}^\infty (\ell_n')^2\leq \left( \frac{m_f+2\varepsilon}{b}\right)^2\sum _{n=1}^\infty \ell_n^2<\infty
\]
by \eqref{ineq:F_0ball} and \cite{Fan-Karagulyan2021}*{Lemma 3.2}. 
Then by Proposition \ref{prop:loc_B} (local Billard criterion) and the fact $f\leq g$ on $\mathbb{T}$ one should have
\begin{equation*}
    \iint_{F_1\times F_1} \exp\left\{ \sum_{n=1}^\infty \mu_g(B(t, r_n')\cap B(s,r_n'))\right\} \, dtds=\infty.
\end{equation*}
Therefore it follows from \eqref{ineq:conclusion} that 
\[
\iint_{F_1\times F_1} \exp\left\{ (m_g + \varepsilon)\sum_{n=1}^\infty (\ell_n'-|t-s|)_+\right\} \, dtds=\infty.
\]
Note that above we can replace $F_1$ with $\mathbb{T}$. Then, a change of coordinates in the exponent yields
\[
\iint_{\mathbb{T}\times \mathbb{T}} \exp\left\{ \sum_{n=1}^\infty ((m_g + \varepsilon)\ell_n'-|t-s|)_+\right\} \, dtds=\infty.
\]
Which, in view of \cite{Kahane1985}*{Theorem 2 (page 150)}, is equivalent to
\[
\sum_{n=1}^\infty \frac{1}{n^2}e^{(m_g+\varepsilon)(\ell'_1 + \dots + \ell'_n)}=\infty.
\]
But note that
\[
\sum_{n=1}^\infty \frac{1}{n^2}e^{(m_g+\varepsilon)(\ell'_1 + \dots + \ell'_n)}\leq \sum_{n=1}^\infty \frac{1}{n^2}e^{\frac{1}{b}(m_f+2\varepsilon)(\ell_1 + \dots + \ell_n)}=\infty
\]
as $b\ell'_n\leq \ell_n$ and $m_g = m_f + \varepsilon$. 
Since $b\in (0,1)$ is a number arbitrarily close to 1 and $\varepsilon>0$ was arbitrary, one arrives at 
\[
\sum_{n=1}^\infty \frac{1}{n^2} e^{a(\ell_1 + \dots + \ell_n)}=\infty
\]
for every $a>m_f$. 
Proposition \ref{mainProp:DtoS} is proven. 

\subsection{Proof of Lemma \ref{Lmm-main}} 

We first prove a lemma.

\begin{lemma} 
For every $n\geq N_0$, $s\in F_0$, and $t\in \mathbb{T}$ one has 
\begin{equation} \label{ineq:intersection}
    \mu _g(B(t, r_n')\cap B(s,r_n'))\leq m_g(\ell_n' - |t-s|)_+ +\varepsilon \ell_n'. 
\end{equation}
\end{lemma}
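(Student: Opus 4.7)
The plan is to exploit two very simple facts: (i) by construction $g \geq m_g$ holds pointwise on $\mathbb{T}$, and (ii) for two arcs on $\mathbb{T}$ of common length $\ell_n'$, the set-theoretic difference $B(s,r_n')\setminus B(t,r_n')$ has Lebesgue measure exactly $\min(|t-s|,\ell_n')$. Combined with the upper bound \eqref{ineq:F_0ball} available for $s\in F_0$ and $n\geq N_0$, this will let me split the mass of $B(s,r_n')$ into its intersection with $B(t,r_n')$ and its complement, and then solve for the first piece.

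First I would verify the pointwise bound $g \geq m_g = m_f+\varepsilon$. By definition \eqref{times}, $g \equiv m_f+\varepsilon$ on $F$, while on $F^c$ we have $g=f>m_f+\varepsilon$ since $F=\{f\leq m_f+\varepsilon\}$; so $g\geq m_g$ everywhere. Next I dispose of the trivial case $|t-s|\geq \ell_n'$: the two open balls of radius $r_n'=\ell_n'/2$ are then disjoint, so the left-hand side is zero and the right-hand side reduces to $\varepsilon\ell_n'\geq 0$.

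In the main case $|t-s|<\ell_n'$, I would write
\[
\mu_g(B(s,r_n')) \;=\; \mu_g\bigl(B(t,r_n')\cap B(s,r_n')\bigr) \;+\; \mu_g\bigl(B(s,r_n')\setminus B(t,r_n')\bigr),
\]
note that the second set has Lebesgue measure $|t-s|$, and apply $g\geq m_g$ to obtain $\mu_g(B(s,r_n')\setminus B(t,r_n'))\geq m_g|t-s|$. Plugging in the upper bound $\mu_g(B(s,r_n'))\leq(m_g+\varepsilon)\ell_n'$ from \eqref{ineq:F_0ball} and rearranging gives
\[
\mu_g\bigl(B(t,r_n')\cap B(s,r_n')\bigr) \;\leq\; (m_g+\varepsilon)\ell_n' - m_g|t-s| \;=\; m_g(\ell_n'-|t-s|) + \varepsilon\ell_n',
\]
which is precisely \eqref{ineq:intersection}.

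No step presents a serious obstacle; the entire argument is a bookkeeping exercise reducing the ball-intersection bound to the already-proven one-sided bound \eqref{ineq:F_0ball} via the pointwise inequality $g\geq m_g$. The only point requiring a brief check is the measure of the crescent $B(s,r_n')\setminus B(t,r_n')$, which on the circle could in principle split into two arcs but still has total length $|t-s|$ whenever the balls overlap.
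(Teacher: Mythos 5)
Your proof is correct and is essentially the same as the paper's: both decompose $\mu_g(B(s,r_n'))$ into the intersection plus the crescent, lower-bound the crescent by $m_g|t-s|$ via $g\geq m_g$, and invoke \eqref{ineq:F_0ball} to control $\mu_g(B(s,r_n'))$; the only difference is that you handle the disjoint case and the bound from \eqref{ineq:F_0ball} directly, whereas the paper first isolates the intermediate inequality $\mu_g(B(t,r_n')\cap B(s,r_n'))\leq m_g(\ell_n'-|t-s|)_+ + (\mu_g(B(s,r_n'))-m_g\ell_n')$ for all $s,t$ and then substitutes. This is a cosmetic reorganization, not a different argument.
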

\begin{proof}
Note that for all $s,t\in \mathbb{T}$, one has 
\begin{equation} \label{ineq2:intersection}
\mu _g(B(t, r_n')\cap B(s,r_n'))\leq m_g(\ell_n' - |t-s|)_+ +(\mu _g(B(s,r_n'))-m_g\ell_n').  
\end{equation}
Indeed, if $|t-s|>\ell _n'$, then this inequality reads as
\[
\mu _g(B(t, r_n')\cap B(s,r_n'))=0\leq 0+\mu _g(B(s,r_n'))-m_g\ell _n',
\]
and this will follow as 
\[
\mu _g(B(s,r_n'))=\int _{B(s,r_n')}g\, dx\geq m_g\ell _n'.
\]
For $s,t\in \mathbb{T}$ such that $|t-s|\leq \ell _n'$, the inequality \eqref{ineq2:intersection} reads as
\[
\mu _g(B(t, r_n')\cap B(s,r_n'))\leq -m_g |t-s|+ \mu _g(B(s,r_n')),
\]
and this will follow since
\begin{align*}
    \mu _g(B(s,r_n'))
    &=\mu _g(B(t, r_n')\cap B(s,r_n'))+\mu _g(B(s,r_n')\setminus B(t, r_n')) \\
    &\geq \mu _g(B(t, r_n')\cap B(s,r_n'))+m_g|t-s|.
\end{align*}
Hence \eqref{ineq2:intersection} is obtained. 
It follows from \eqref{ineq:F_0ball} and \eqref{ineq2:intersection} that for $n\geq N_0$, $s\in F_0$, and $t\in \mathbb{T}$ one has 
\begin{align*} 
    \mu _g(B(t, r_n')\cap B(s,r_n'))
    &\leq m_g(\ell_n' - |t-s|)_+ +(m_f+2\varepsilon -m_g)\ell_n'=m_g(\ell_n' - |t-s|)_+ +\varepsilon \ell_n'. 
\end{align*}
Lemma is proved. 
\end{proof}

\begin{proof}[Proof of Lemma \ref{Lmm-main}]

Let $N>N_0$, and $s\in F_0$, $t\in \mathbb{T}$ such that $\ell _{N+1}<|t-s|\leq \ell _N$. 
Then 
\[
\sum_{n=1}^\infty \mu_g(B(t, r_n')\cap B(s,r_n')) =
\sum_{n=1}^N \mu_g(B(t, r_n')\cap B(s,r_n'))
\]
as $B(t, r_n')\cap B(s,r_n')=\emptyset $ for $n\geq N+1$, and by \eqref{ineq:intersection} one has
\begin{align*}
    &\sum_{n=1}^N \mu_g(B(t, r_n')\cap B(s,r_n')) \\
    &=\sum_{n=1}^{N_0-1} \mu_g(B(t, r_n')\cap B(s,r_n')) +\sum_{n=N_0}^{N} \mu_g(B(t, r_n')\cap B(s,r_n')) \\
    &\leq \sum_{n=1}^{N_0-1} \mu_g(B(t, r_n')\cap B(s,r_n')) +\sum_{n=N_0}^{N} \left\{ m_g(\ell_n' - |t-s|)_+ +\varepsilon \ell_n'\right\} \\
    &\leq \sum_{n=1}^{N_0-1} \left\{ m_g(\ell_n' - |t-s|)_+ +\varepsilon \ell_n'\right\} +\sum_{n=N_0}^{N} \left\{ m_g(\ell_n' - |t-s|)_+ +\varepsilon \ell_n'\right\} +R_0 \\
    &=\sum_{n=1}^{N} \left\{ m_g(\ell_n' - |t-s|)_+ +\varepsilon \ell_n'\right\} +R_0,
\end{align*}
where we set $R_0=\sum_{n=1}^{N_0-1} \mu_g(B(t, r_n')\cap B(s,r_n'))$. 
Note that one can write
\[
N=\nu_0+\dots +\nu_{K-1}+u
\]
for some $K=K_N\in \mathbb{N}$ and $u \in \{ 1,\dots ,\nu_{K}\}$. 
It follows that 
\begin{align*}
    &\sum_{n=1}^\infty \mu_g(B(t, r_n')\cap B(s,r_n')) \\
    &=\sum_{n=1}^{\nu_0+\dots +\nu_{K-1}} \mu_g(B(t, r_n')\cap B(s,r_n'))+\sum_{n=\nu_0+\dots +\nu_{K-1}+1}^{N} \mu_g(B(t, r_n')\cap B(s,r_n')) \\
    &\leq m_g\sum_{k=0}^{K-1} \nu_k(b^k - |t-s|)_+ +\sum_{k=0}^{K-1} \varepsilon \nu_k b^k +\sum_{n=\nu_0+\dots +\nu_{K-1}+1}^{N} \mu_g(B(t, r_n')\cap B(s,r_n'))  +R_0.
\end{align*}
Note that for every $k\in \{ 0,1,\dots ,K-1\}$, one can see that 
\begin{equation*}
    b^k(1-b)\leq b^k-b^K\leq b^k-|t-s|
\end{equation*}
as $b^{K-k}\leq b<1$, and hence one has
\[
\sum_{k=0}^{K-1} \varepsilon \nu_kb^k\leq \sum_{k=0}^{K-1} \varepsilon \nu_k\frac{b^k-|t-s|}{1-b}.
\]
It follows that 
\begin{equation} \label{ineq:principal_sum}
    \begin{split}
    \sum_{n=1}^\infty \mu_g(B(t, r_n')\cap B(s,r_n')) 
    &\leq \left( m_g+\frac{\varepsilon }{1-b}\right) \sum_{k=0}^{K-1} \nu_k(b^k - |t-s|)_+ \\
    &\quad +\sum_{n=\nu_0+\dots +\nu_{K-1}+1}^{N} \mu_g(B(t, r_n')\cap B(s,r_n'))  +R_0.
    \end{split}
\end{equation}

We now estimate the right-hand side of \eqref{ineq:principal_sum}  separating the cases 1), 2), and 3).  

\noindent 
{\bf Case 1)} 
Note that $b^K(1-c) \leq b^K-r$ as $r=|t-s|\leq c b^K$. 
Hence
\begin{align*}
    \sum_{n=\nu_0+\dots +\nu_{K-1}+1}^{N} \mu_g(B(t, r_n')\cap B(s,r_n')) 
    &\leq \nu_K\left\{ m_g(b^K - r)_+ +\varepsilon b^K\right\} \\
    &\leq \nu_Km_g(b^K - r)_+ +\nu _K\frac{\varepsilon}{1-c} (b^K - r)_+ \\
    &= \nu_K\left( m_g+\frac{\varepsilon }{1-c}\right) (b^K-r)_+.
\end{align*}
It follows that 
\begin{align*}
    \sum_{n=1}^\infty \mu_g(B(t, r_n')\cap B(s,r_n')) 
    &\leq \left( m_g+\frac{\varepsilon }{1-b}\right) \sum_{k=0}^{K-1} \nu_k(b^k - r)_+ \\
    &\quad +\left( m_g+\frac{\varepsilon }{1-c} \right) \nu_K(b^K-r)+R_0\\
    &\leq \left( m_g+\frac{\varepsilon }{1-b}+\frac{\varepsilon }{1-c}\right) \sum_{k=0}^{K} \nu_k(b^k - r)_+ +R_0 \\
    &= \left( m_g+\frac{\varepsilon }{1-b}+\frac{\varepsilon }{1-c}\right) \sum_{k=0}^{\infty} \nu_k(b^k - r)_+ +R_0.
\end{align*}
as $(b^k - r)_+=0$ for $k>K$. 
Hence
\[
g \ast \phi_r=\frac{\sum_{n=1}^\infty \mu_g(B(t, r_n')\cap B(s,r_n'))}{\sum_{k=0}^{\infty} \nu_k(b^k - r)_+}\leq m_g+ \varepsilon_0.
\]

\noindent 
{\bf Case 2)} 
By assumption \eqref{kernel}, we have
\begin{equation}\label{inter}
\sum_{k=0}^K \nu_k(b^k-r)_+ \ge \sum_{k=0}^K \nu_k(b^k-b^K)_+ \geq (1- \delta)\sum_{k=0}^K \nu_k b^k. 
\end{equation}
Since 
\begin{align*}
    \sum_{n=\nu_0+\dots +\nu_{K-1}+1}^{N} \mu_g(B(t, r_n')\cap B(s,r_n')) &\leq \sum_{n=\nu_0+\dots +\nu_{K-1}+1}^{N}(b^K-r)_++\varepsilon b^K \\
    &=\nu_K(b^K-r)_++\varepsilon \nu_K,
\end{align*}
it follows from \eqref{ineq:principal_sum} and \eqref{inter} that
\begin{align*}
    \sum_{n=1}^\infty \mu_g(B(t, r_n')\cap B(s,r_n')) 
    &\leq m_g\sum_{k=0}^K \nu _k(b^k-r)_+ + \sum_{k=0}^K \varepsilon \nu_k b^k+R_0 \\
    &\leq \left( m_g+\frac{\varepsilon}{1-\delta } \right) \sum_{k=0}^K \nu_k(b^k-r)_++R_0 \\
    &= \left( m_g+\frac{\varepsilon}{1-\delta } \right) \sum_{k=0}^\infty \nu_k(b^k-r)_++R_0. 
\end{align*}
Hence
\[
g \ast \phi_r(s) \leq m_g + \varepsilon_0.
\]

\noindent 
{\bf Case 3)} We now take care of terms for which $c b^{K}<|t-s|\leq b^K$ and \eqref{kernel} fails. 
One has 
\begin{align*}
    \sum_{n=\nu_0+\dots +\nu_{K-1}+1}^{N} \mu_g(B(t, r_n')\cap B(s,r_n'))
    &=\sum_{n=\nu_0+\dots +\nu_{K-1}+1}^{N} \int _{B\left( \frac{t+s}{2},\frac{b^K-|t-s|}{2}\right)}g\,dx \\
    &\leq \nu _K\int _{B\left( \frac{t+s}{2},\frac{b^K-|t-s|}{2}\right)}g\,dx,
\end{align*}
and thus 
\begin{equation} \label{estm-2}
    \begin{split}
    \sum_{n=1}^\infty \mu_g(B(t, r_n')\cap B(s,r_n')) 
    &\leq \left( m_g+\frac{\varepsilon}{1-b} \right) \sum_{k=0}^{K-1} \nu_k (b^k-|t-s|)_+ \\
    &\quad +\nu _K\int _{B\left( \frac{t+s}{2},\frac{b^K-|t-s|}{2}\right)}g\,dx+R_0.
    \end{split}
\end{equation}

\begin{sublemma} \label{lebesgue}
Let $\gamma_n\in (0,1]$ and $a_n\searrow 0$ as $n\to \infty$. 
Then for every $h \in L^1(\mathbb{T})$ we have
\[
\lim_{n\to \infty} \frac{1}{a_n \gamma _n} \int_{[s+a_n(1-\gamma_n),s+a_n]}h\, dx = h(s),
\]
for Lebesgue almost every $s \in \mathbb{T}$.
\end{sublemma}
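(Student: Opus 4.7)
The plan is to reduce the statement to the one-sided Lebesgue differentiation theorem, exploiting the fact that the intervals $[s+a_n(1-\gamma_n), s+a_n]$, though disjoint from $\{s\}$, still shrink into an arbitrarily small right-neighborhood of $s$ as $a_n\to 0$. The starting point is the identity
\[
\frac{1}{a_n\gamma_n}\int_{s+a_n(1-\gamma_n)}^{s+a_n}h(x)\,dx - h(s) = \frac{1}{a_n\gamma_n}\int_{s+a_n(1-\gamma_n)}^{s+a_n}\bigl[h(x) - h(s)\bigr]\,dx,
\]
combined with the fact that for a.e.\ $s\in \mathbb{T}$ one has the one-sided Lebesgue point property
\[
\frac{1}{r}\int_s^{s+r}|h(x) - h(s)|\,dx \longrightarrow 0 \text{ as } r \to 0^+.
\]

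The main steps are as follows. First, I would perform a density argument: approximate $h$ in $L^1(\mathbb{T})$ by a continuous $g$ with $\|h-g\|_{L^1}<\varepsilon$. For continuous $g$ the assertion is immediate and uniform in $s$, since the averaging intervals lie inside $[s,s+a_n]$ with $a_n\to 0$, so continuity of $g$ at $s$ alone suffices. Second, for the remainder $\phi := h-g$, I would dominate the associated maximal operator by the one-sided Hardy--Littlewood maximal operator
\[
M^+\phi(s) := \sup_{r>0}\frac{1}{r}\int_s^{s+r}|\phi(x)|\,dx,
\]
using the enclosure $[s+a_n(1-\gamma_n),s+a_n]\subset [s,s+a_n]$ to obtain the pointwise bound
\[
\sup_n\frac{1}{a_n\gamma_n}\int_{s+a_n(1-\gamma_n)}^{s+a_n}|\phi(x)|\,dx \leq \frac{1}{\gamma_*}\,M^+\phi(s), \quad \gamma_* := \inf_n \gamma_n.
\]
Since $M^+$ is weak-$(1,1)$ bounded, the standard Banach principle then forces the set where the limit fails to equal $h(s)$ to be Lebesgue null.

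The main obstacle is the factor $1/\gamma_n$ appearing in the dominating estimate. When $\gamma_n$ is bounded away from $0$ the argument is clean, and this is precisely the regime in which the sublemma is invoked in Case~3) of Lemma~\ref{Lmm-main}: there $\gamma_n$ corresponds to a quantity comparable to $1-c$ with $c<1$ fixed, so $\gamma_*>0$ and the maximal comparison is uniform. If one had to allow $\gamma_n\to 0$, the argument would need refinement—for instance, splitting $\int_s^{s+a_n}[h-h(s)] - \int_s^{s+a_n(1-\gamma_n)}[h-h(s)]$ and using sharper information at Lebesgue points—but this is not needed for the present application.
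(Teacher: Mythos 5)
Your argument takes a genuinely different route from the paper's. The paper applies the one-sided Lebesgue differentiation theorem at the two endpoints---writing $\int_{[s,s+a_n]}h\,dx=(h(s)+o(1))a_n$ and $\int_{[s,s+a_n(1-\gamma_n)]}h\,dx=(h(s)+o(1))a_n(1-\gamma_n)$---and then subtracts. You instead run the classical density-plus-maximal-function machinery: approximate $h$ by a continuous function, dominate the residual by the one-sided Hardy--Littlewood operator $M^+$ using the inclusion $[s+a_n(1-\gamma_n),s+a_n]\subset[s,s+a_n]$, and invoke weak $(1,1)$ boundedness together with the Banach principle. Both routes are valid and, in fact, they face the same obstruction: the paper's subtraction leaves, after dividing by $a_n\gamma_n$, an error of size $o(1)/\gamma_n$, which is exactly the $1/\gamma_*$ factor that appears in your maximal-function bound. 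What you do that the paper does not is flag this explicitly. The sublemma is stated for arbitrary $\gamma_n\in(0,1]$, yet neither proof handles $\gamma_n\to 0$; for such sequences the averaging window of length $a_n\gamma_n$ at distance $a_n(1-\gamma_n)$ from $s$ is genuinely tangential, and a.e.\ convergence for all of $L^1$ should not be expected, precisely in the spirit of the Littlewood-type divergence the paper cites.

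One small inaccuracy in your remarks on the application: writing the interval $B\bigl(\frac{t+s}{2},\frac{b^K-|t-s|}{2}\bigr)$ in the form $[s+a(1-\gamma),s+a]$ gives $a=b^K/2$ and $\gamma=2-2|t-s|/b^K$, which ranges over $[0,2(1-c))$ as $|t-s|$ ranges over $(cb^K,b^K]$. Thus $\gamma$ is bounded above by a multiple of $1-c$, but its infimum over the relevant pairs $(t,s)$ is $0$, so the assertion that $\gamma_*>0$ in Case~3) is not correct as stated. This does not affect the correctness of your proof of the sublemma itself, but it means the hypothesis $\gamma_*>0$ on which both your proof and the paper's implicitly rely is not automatic in the application either---so the caveat you raise is a live issue for the paper's use of the sublemma, not merely a hypothetical one.
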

\begin{proof}
By Lebesgue's differentiation theorem, for Lebesgue almost every $s \in \mathbb{T}$ we have
\[
\int_{[s,s+a_n]}h\, d x=(h(s) + o(1))a_n, \quad \int_{[s,s+a_n (1-\gamma_n)]}h\, d x=(h(s)+o(1))a_n(1-\gamma_n)
\]
as $n\to \infty $. 
Hence
\[
\int_{[s+a_n(1-\gamma_n),s+a_n]}h\, d x=(h(s)+o(1))a_n - (h(s)+o(1))a_n(1-\gamma_n)=h(s)a_n \gamma_n + o(1)
\]
as $n\to \infty $. 
\end{proof}

For the integral in the right-hand side of \eqref{estm-2}, it follows from Sublemma \ref{lebesgue} that 
\[
\lim_{K\to \infty} \frac{1}{(b^K-|t-s|)} \int _{B\left( \frac{t+s}{2},\frac{b^K-|t-s|}{2}\right)}g\,dx= g(s)=m_g
\]
for Lebesgue almost every $s\in F_0$. 
Here recall that $g(s)=m_g$ for $s\in F$. 
Using Egoroff's theorem, one has a compact set $F_1\subset F_0$ with $|F_1|>|F_0|/2>0$ such that the sequence of averages converges to $g$ uniformly on $F_1$ as $K\to \infty$. 
It follows that there is an integer $K_1>1$ such that for every $K\geq K_1$ and $s\in F_1$ one has 
\[
\int _{B\left( \frac{t+s}{2},\frac{b^K-|t-s|}{2}\right)}g\,dx\leq 2m_g(b^K-|t-s|).
\]
Since $b^K-|t-s|\leq b^K-cb^K$, we have 
\begin{equation} \label{ineq:integral}
    \begin{split}
    \sum_{n=1}^\infty \mu_g(B(t, r_n')\cap B(s,r_n')) 
    &\leq \left( m_g+\frac{\varepsilon}{1-b} \right) \sum_{k=0}^{K-1} \nu_k (b^k-|t-s|)_+ \\
    &\quad +2m_g\nu _Kb^K(1-c)+R_0
    \end{split}
\end{equation}
for every $s\in F_1$.

Let 
\[
\lambda_{K} = \frac{\nu_K}{\sum_{k=0}^{K-1}\nu_k}.
\]

\begin{sublemma} \label{lambda}
Assume for a sub-sequence $\{K_q\}_{q \geq 1}$ we have that
\[
\lim_{q\rightarrow \infty}\frac{(\nu_0+\dots +\nu_{K_q})b^{K_q}}{\sum_{k=0}^{K_q} \nu_k b^k}=1.
\]
Then we have $\lambda_{K_q} \to \infty$, as $q \rightarrow \infty$ and 
\[
\frac{\nu_{K_q} b^{K_q}}{\sum_{k=0}^{{K_q}-1} \nu_k b^k}\geq (1-o(1))\left( \frac{1}{\lambda _{K_q}} +o(1)\right) ^{-1}
\]
as $q \rightarrow \infty$. 
\end{sublemma}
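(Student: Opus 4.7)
\medskip

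\noindent\textbf{Proof proposal for Sublemma~\ref{lambda}.}
Write $S_q=\sum_{k=0}^{K_q-1}\nu_k$ and $T_q=\sum_{k=0}^{K_q-1}\nu_k b^k$, so that the hypothesis rewrites as
\[
\frac{(S_q+\nu_{K_q})b^{K_q}}{T_q+\nu_{K_q}b^{K_q}}\longrightarrow 1 \qquad (q\to\infty).
\]
The plan is to track a single ``remainder'' quantity, namely
\[
T_q - S_q b^{K_q}=\sum_{k=0}^{K_q-1}\nu_k b^k(1-b^{K_q-k}),
\]
and convert the hypothesis into a quantitative statement about it.

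First I would extract $\varepsilon_q\to 0$ with
\[
T_q - S_q b^{K_q}=\varepsilon_q\bigl(T_q+\nu_{K_q}b^{K_q}\bigr),
\]
since this is literally the numerator minus the denominator in the hypothesis. Using $1-b^{K_q-k}\geq 1-b$ termwise, one obtains the one-line lower bound
\[
T_q - S_q b^{K_q}\geq (1-b)\,T_q,
\]
so plugging this back into the displayed equality yields $(1-b)T_q\leq \varepsilon_q(T_q+\nu_{K_q}b^{K_q})$, hence $T_q/(\nu_{K_q}b^{K_q})\to 0$. Combined with the trivial inequality $T_q\geq S_q b^{K_q-1}=S_q b^{K_q}/b$, this gives $\lambda_{K_q}=\nu_{K_q}/S_q\geq b^{-1}\cdot \nu_{K_q}b^{K_q}/T_q\to\infty$, which is the first assertion.

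For the second assertion, I would simply rearrange the identity $T_q(1-\varepsilon_q)=S_qb^{K_q}+\varepsilon_q\nu_{K_q}b^{K_q}$ by dividing through by $\nu_{K_q}b^{K_q}$:
\[
\frac{T_q}{\nu_{K_q}b^{K_q}}(1-\varepsilon_q)=\frac{1}{\lambda_{K_q}}+\varepsilon_q,
\]
so that inverting gives the claimed inequality (in fact an equality)
\[
\frac{\nu_{K_q}b^{K_q}}{T_q}=\frac{1-\varepsilon_q}{\tfrac{1}{\lambda_{K_q}}+\varepsilon_q}.
\]

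The whole argument is a short algebraic manipulation, so I do not anticipate a substantive obstacle. The only place where care is required is recognising that the convergence hypothesis, viewed as a statement about numerator minus denominator, forces the ``off-diagonal'' mass $T_q$ to be negligible compared with the ``diagonal'' mass $\nu_{K_q}b^{K_q}$; once this is noticed, both conclusions fall out immediately. One should also verify at the end that $1/\lambda_{K_q}+\varepsilon_q>0$ so the inversion is legitimate, which is automatic since $S_q\geq 1$ and $\varepsilon_q\to 0$ from above (as $T_q\geq S_q b^{K_q}/b>S_q b^{K_q}$).
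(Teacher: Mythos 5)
Your proposal is correct. The route is essentially the same as the paper's: both rest on isolating the gap between numerator and denominator in the hypothesis, and both use the same elementary inequality $\sum_{k<K_q}\nu_k b^k \geq b^{K_q-1}\sum_{k<K_q}\nu_k$ (your $T_q\geq S_q b^{K_q-1}$) to pass from the negligibility of $T_q$ relative to $\nu_{K_q}b^{K_q}$ to $\lambda_{K_q}\to\infty$; and the second assertion is in both cases a direct rearrangement of the hypothesis divided by $\nu_{K_q}b^{K_q}$. The cosmetic difference is that you track the remainder $T_q - S_q b^{K_q}$ and the termwise bound $1-b^{K_q-k}\geq 1-b$, whereas the paper instead bounds the ratio by the rational expression $\frac{(1+\lambda_K)b}{1+\lambda_K b}$ and solves the resulting inequality for $\lambda_{K_q}$; your version is arguably cleaner, and you even observe that the final bound is an equality for the canonical choice of $\varepsilon_q$, which the paper's formulation obscures. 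No gaps.
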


\begin{sublemma} \label{nu}
Assume \eqref{contra-shepp^a}. 
Assume further that $\lambda_K\geq 1$ for some $K\in \mathbb{N}$, then there exists $C_0>0$, such that
\[
\nu_{K} b^{K} \leq C_0 K.
\]
\end{sublemma}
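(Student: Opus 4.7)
The plan is to combine a lower bound on $\ell_1+\cdots+\ell_M$ coming from the definition of $\nu_K$ with the Shepp-type upper bound forced by \eqref{contra-shepp^a}, using $\lambda_K\ge 1$ to keep $M=\nu_0+\cdots+\nu_K$ comparable to $\nu_K$. Set $N=\nu_0+\cdots+\nu_{K-1}$. The hypothesis $\lambda_K=\nu_K/N\ge 1$ gives $M=N+\nu_K\le 2\nu_K$. Because $b\ell'_n\le \ell_n$ and $\ell'_n=b^K$ for $n\in\{N+1,\ldots,M\}$, summing just the last $\nu_K$ of these terms gives
\[
\ell_1+\cdots+\ell_M\ \ge\ \nu_K\cdot b^{K+1}\ =\ b\,\nu_K b^K.
\]

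On the other hand, convergence of the series in \eqref{contra-shepp^a} forces its general term to be bounded, so there exists $C_1>0$ with $\ell_1+\cdots+\ell_n\le (2/a)\ln n+C_1$ for every $n\ge 1$. Applying this at $n=M$ and using $M\le 2\nu_K$ produces
\[
b\,\nu_K b^K\ \le\ \tfrac{2}{a}\ln(2\nu_K)+C_1\ =\ \tfrac{2}{a}\ln(\nu_K b^K)+\tfrac{2\ln(1/b)}{a}\,K+C_2.
\]
Writing $z=\nu_K b^K$, this reads $bz\le (2/a)\ln z+\beta K+C_2$ with $\beta=(2/a)\ln(1/b)$. Since $\ln z\le \varepsilon z+C_\varepsilon$ for every $\varepsilon>0$ and every $z>0$, absorbing the logarithm into the left-hand side by choosing $\varepsilon$ small enough yields $z\le C_0 K$ with $C_0$ depending only on $a$ and $b$.

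The one slightly delicate point is producing a single $C_0$ valid uniformly in $K$: the absorption of $\ln z$ into $bz$ is asymptotic in $z$, and for small $K$ the inequality $M\le 2\nu_K$ does not yet give much, but these finitely many exceptional cases can be folded into $C_0$ by enlarging it. Conceptually the argument is that the Shepp bound confines $S_M$ to logarithmic growth in $M$, while the block structure forces $S_M\ge b\,\nu_K b^K$, and the hypothesis $\lambda_K\ge 1$ is precisely what one needs to convert a bound of the form $\nu_Kb^K\lesssim \ln M$ into the desired $\nu_Kb^K\lesssim K$.
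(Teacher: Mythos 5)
Your argument is correct and reaches the same conclusion as the paper's, but it takes a genuinely different route in one key step. Both proofs share the same skeleton: convergence of the Shepp series gives $\ell_1+\cdots+\ell_M\lesssim \ln M$ for $M=\nu_0+\cdots+\nu_K$, the block structure gives $\ell_1+\cdots+\ell_M\gtrsim \nu_Kb^K$, and $\lambda_K\ge 1$ gives $M\le 2\nu_K$, so that $\nu_K b^K\lesssim \ln\nu_K$. The divergence is in how this is converted into $\nu_K b^K\lesssim K$. The paper invokes the auxiliary fact that \eqref{contra-shepp^a} implies $\sum\ell_n^2<\infty$ (via \cite{Fan-Karagulyan2021}*{Lemma~3.2}), deduces $\nu_Kb^{2K}\le 1$ for large $K$, hence $\nu_K\le b^{-2K}$, and plugs this into $\ln(2\nu_K)$ to get a linear-in-$K$ bound. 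You instead split $\ln(2\nu_K)=\ln(\nu_K b^K)+K\ln(1/b)+\ln 2$ and absorb the $\ln(\nu_K b^K)$ term into the left-hand side via $\ln z\le \varepsilon z+C_\varepsilon$. Your route is more self-contained---it avoids the $\ell^2$-summability lemma entirely---at the cost of the small bookkeeping needed to make the absorption uniform in $K$, which you correctly flag and handle by enlarging $C_0$ to cover the finitely many small $K$. One minor point worth tightening: the cleanest way to get the lower bound $\ell_1+\cdots+\ell_M\ge\nu_K b^{K+1}$ is to argue directly from the definition of $\nu_K$ (there are exactly $\nu_K$ indices with $b^{K+1}<\ell_n\le b^K$, all $\le M$ since $\{\ell_n\}$ is decreasing) rather than via $\ell'_n$, which sidesteps the slight off-by-one ambiguity in the paper's definition of $\ell'_n$.
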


We postpone the proofs of the sublemmas above to Section \ref{proof:two lemmas} and proceed the argument. 
Given a small $\zeta >0$, by Sublemmas \ref{lambda} and \ref{nu}, one has
\begin{equation} \label{ineq:finite sum}
    \sum_{k=0}^{K-1} \nu_k (b^k-|t-s|)_+\leq \sum_{k=0}^{K-1} \nu_k b^k\leq \frac{1}{1-\zeta }\left( \frac{1}{\lambda _K} +\zeta \right) \nu_Kb^{K}\leq 2\left( \frac{1}{\lambda _K} +\zeta \right) C_0K. 
\end{equation}
It follows from \eqref{ineq:integral} and \eqref{ineq:finite sum} that
\begin{align*}
    \sum_{n=1}^\infty \mu_g(B(t, r_n')\cap B(s,r_n')) 
    &\leq 2C_0\left\{ \left( m_g+\frac{\varepsilon}{1-b} \right)\left( \frac{1}{\lambda _K} +\zeta \right) +m_g(1-c)\right\} K+R_0 \\
    &\leq C_1\left\{ \left( \frac{1}{\lambda _K} +\zeta \right) +(1-c)\right\} K+R_0,
\end{align*}
where we let $C_1=2C_0(m_g+(\varepsilon /(1-b)))>0$. 
Hence for large $K$, one has 
\[
\sum_{n=1}^\infty \mu_g(B(t, r_n')\cap B(s,r_n'))\leq C_2\left\{ \left( \frac{1}{\lambda _K} +\zeta \right) +(1-c)\right\} K
\]
for some $C_2>0$. 
We now take $\zeta >0$ small, $\lambda_K>1$ large by Sublemma \ref{lambda}, and $c\in (0,1)$ so close to $1$ that (recall that $b$ is a fixed number $b\in (0,1)$ arbitrarily close to 1)
\[
C_2\left\{ \left( \frac{1}{\lambda_K} +\zeta \right) +(1-c)\right\} \leq \frac{1}{2}\ln \frac{1}{b}.
\]
Lemma \ref{Lmm-main} is obtained. 
\end{proof}

\subsubsection{Proof of Subemma \ref{lambda} and Sublemma \ref{nu}} \label{proof:two lemmas}

\begin{proof}[Proof of Sublemma \ref{lambda}]
Observe that the function
\[
u\mapsto \beta _K(u)=\frac{(\nu_0+\cdots +\nu_{K-1}+u) b^K}{ \sum_{k=0}^{K-1}\nu_k b^K + u b^K}
\]
is increasing in $u$, hence $\beta _K|_{\{1, \ldots, \nu _{K}\}}$ will reach its maximum value for $u=\nu _K$.
Note that each $N\in \mathbb{N}$ can be written as 
\[
N=\nu _0+\dots +\nu _{K-1}+u
\]
for some $K \in \mathbb{N}$ and $u \in\left\{1, \ldots, \nu _{K}\right\}$, and for such an $N$ one has $\ell_{N}^{\prime}=b^{K}$. 
Hence
\begin{align*}
    \frac{N \ell_{N}^{\prime}}{\ell_{1}^{\prime}+\cdots+\ell_{N}^{\prime}} 
    &=\frac{\left(\nu_{0}+\nu_{1}+\cdots+\nu_{K-1}+u\right) b^{K}}{\nu_{0}+\nu_{1} b+\cdots+\nu_{K-1} b^{K-1}+u b^{K}} \\
    & \leq \frac{\left(\nu_{0}+\nu_{1}+\cdots+\nu_{K-1}+u\right) b^{K}}{\left(\nu_{0}+\nu_{1}+\cdots+\nu_{q-1}\right) b^{K-1}+u b^{K}} \\
    &=\frac{\left(\nu_{0}+\cdots+\nu_{K-1}+u\right) b}{\left(\nu_{0}+\cdots+\nu_{K-1}\right)+u b} \\
    & \leq \frac{\left(\nu_{0}+\cdots+\nu_{K-1}+\lambda_{K}\left(\nu_{0}+\cdots+\nu_{K-1}\right)\right) b}{\left(\nu_{0}+\nu_{1}+\cdots+\nu_{K-1}\right)+\lambda_{K}\left(\nu_{0}+\cdots+\nu_{K-1}\right) b} =\frac{\left(1+\lambda_{K}\right) b}{1+\lambda_{K} b}<1
\end{align*}
since $b<1$. 
By assumption, there exists a sequence $\{ \zeta_{q}\}$, with $\zeta_{q}\to 0$ as $q\to \infty$, such that 
\[
\frac{(\nu_0+\cdots +\nu_{K_q}) b^{K_q}}{ \sum_{k=0}^{K_q}\nu_k b^{k}}>1-\zeta_{q}
\]
for every $q\in \mathbb{N}$. 
It follows from above that 
\begin{equation} \label{ineq:lambda_nu_zeta}
    \frac{\left(1+\lambda_{K_q}\right) b}{1+\lambda_{K_q} b}\geq\frac{(\nu_0+\cdots +\nu_{K_q}) b^{K_q}}{ \sum_{k=0}^{K_q}\nu_k b^{k}}>1-\zeta_{q},
\end{equation}
or equivalently
\[
\lambda_{K_q}>\frac{1-\zeta_{q}-b}{\zeta_q b}.
\]
Thus one has $\lambda_{K_q}\rightarrow \infty$ as $\zeta_q$ goes to zero when $q\to \infty$. 

Since 
\[
\nu_0+\cdots +\nu_{K_{q}-1}+\nu_{K_{q}}=\frac{\nu_{K_q}}{\lambda_{K_q}} +\nu_{K_q},
\]
one also has
\[
\left( \frac{\nu_{K_q}}{\lambda_{K_q}} +\nu_{K_q}\right) b^{K_q}\geq (1-\zeta_q)\nu_{K_q}b^{K_q} + (1-\zeta_q)\sum_{k=0}^{{K_q}-1} \nu _k b^k
\]
by \eqref{ineq:lambda_nu_zeta}, or
\[
\left(\frac{1}{\lambda_{K_q}}+\zeta_q\right)\nu_{K_q}b^{K_q}\geq (1-\zeta_q)\sum_{k=0}^{{K_q}-1} \nu _k b^k,
\]
and thus 
\[
\frac{\nu_{K_q}b^{K_q}}{\sum_{k=0}^{{K_q}-1} \nu _k b^k}\geq \frac{1-\zeta_q}{\left(\frac{1}{\lambda_{K_q}}+\zeta_q\right)}
\]
for such sequences $\{ \zeta_{q}\}$ and $\{ K_q\}$. 
\end{proof}

\begin{proof}[Proof of Sublemma \ref{nu}]
By definition $b\ell'_n\leq \ell_n \leq \ell'_n$ for every $n\in \mathbb{N}$. 
Hence
\[
\sum_{n=1}^\infty \frac{1}{n^2}e^{ab(\ell_1'+\dots + \ell_n')}
\leq \sum_{n=1}^\infty \frac{1}{n^2}e^{a(\ell_1+\dots + \ell_n)}.
\]
Then by assumption \eqref{contra-shepp^a} it follows that
\[
\sum_{n=1}^\infty \frac{1}{n^2}e^{ab(\ell_1'+\dots + \ell_n')}<\infty.
\]
Therefore
\[
\frac{1}{(\nu_0+\dots +\nu_K)^2}e^{ab\sum_{k=0}^K \nu_k b^k}\to 0,
\]
as $K \rightarrow \infty$. 
Then there exists $C>0$ such that for all large $K\in \mathbb{N}$, we have
\[
\sum_{k=0}^K \nu_k b^k\leq C\ln (\nu_0+\dots + \nu_K).
\]
It follows from the assumption on $\lambda_K$ that 
\begin{equation} \label{ineq:log_nu}
    \nu_K b^K \leq \sum_{k=0}^K \nu_k b^k\leq C\ln (\nu_0+\dots + \nu_K)\leq C \ln (2\nu_{K}). 
\end{equation}
Note that in view of \eqref{contra-shepp^a} we have
\[
\sum_{n=1}^\infty \ell_n^2<\infty,
\]
by \cite{Fan-Karagulyan2021}*{Lemma 3.2}. 
Then we see that 
\[
\sum_{n=1}^\infty (b\ell'_n)^2=b^2\sum_{k=0}^\infty \nu_k b^{2k}\leq \sum_{n=1}^\infty \ell_n^2<\infty
\]
Hence $\nu_k b^{2k} \leq 1$, for all large values of $k$. 
It then follows from \eqref{ineq:log_nu} that
\[
\nu_K b^K \leq C \ln (2\nu_{K}) \leq \{2C\ln(2/b)\} K.
\]
Letting $C_0=2C\ln(2/b)>0$. 
Lemma is obtained. 
\end{proof}

\subsection{Proof of Theorem \ref{mainThm:ub_dvoretzky-hawkes}-(a)} \label{hawkes}

In this section, we give a necessary condition of Kahane-Hawkes type for non-uniform Dvoretzky covering of $\mathbb{T}$ (see \cites{Hawkes1973, Kahane1985}), and prove Theorem \ref{mainThm:ub_dvoretzky-hawkes}-(a). 
For a given sequence $\{\ell_n\}_{n \in \mathbb{N}}$, let
\begin{equation} \label{D}
    D=D(\{\ell_n\})=\limsup_{n \to \infty}\frac{\ell_1 + \dots + \ell_n}{\ln n}.
\end{equation}

\begin{lemma} \label{lem:shepp^a-hawkes}
Conditions \eqref{mD} and \eqref{cond:shepp^a} are equivalent. 
\end{lemma}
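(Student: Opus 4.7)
The plan is to translate both conditions into statements about the asymptotic comparison between $S_n := \ell_1 + \cdots + \ell_n$ and $\ln n$, and then verify the equivalence by a direct block estimate. Set $D = \limsup_{n\to\infty} S_n/\ln n$, so that \eqref{mD} reads $D \geq 1/m_f$, while \eqref{cond:shepp^a} concerns the convergence of $\sum n^{-2} e^{aS_n}$ for every $a > m_f$. The heuristic is clear: since $e^{aS_n}/n^2 = e^{aS_n - \ln n}/n$, the borderline behaviour for divergence is $aS_n \approx \ln n$, so one expects $\sum n^{-2}e^{aS_n}$ to diverge exactly when $aD \geq 1$, i.e.\ $a \geq 1/D$.

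For the implication \eqref{mD}$\Rightarrow$\eqref{cond:shepp^a}, fix $a > m_f$, so that $1/a < 1/m_f \leq D$. Choose $\epsilon > 0$ with $(1+\epsilon)/a < D$. By definition of $\limsup$, there are infinitely many indices $n_0$ for which $aS_{n_0} \geq (1+\epsilon)\ln n_0$. Since $\{S_n\}$ is increasing, for every $n \in [n_0,\, n_0^{1+\epsilon}]$ one has $aS_n \geq aS_{n_0} \geq (1+\epsilon)\ln n_0 \geq \ln n$, hence $e^{aS_n}/n^2 \geq 1/n$. The contribution of this block to the Shepp sum is therefore bounded below by
\[
\sum_{n = n_0}^{\lfloor n_0^{1+\epsilon}\rfloor} \frac{1}{n} \;\geq\; \epsilon \ln n_0 + O(1),
\]
which tends to $\infty$ as $n_0\to\infty$. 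Picking a subsequence of disjoint such blocks (which is possible since we have infinitely many starting indices) gives divergence of the full series.

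For the converse \eqref{cond:shepp^a}$\Rightarrow$\eqref{mD}, I would argue by contraposition. Suppose $D < 1/m_f$ (this includes the convention $1/m_f = \infty$ when $m_f=0$, so I just require $D$ finite with $Dm_f<1$). Then the open interval $(m_f, 1/D)$ is nonempty; pick any $a$ in it, so that $\gamma := aD < 1$. By definition of $\limsup$, for any $\delta$ with $\gamma + a\delta < 1$ there is $n_1$ such that $S_n \leq (D+\delta)\ln n$ for every $n \geq n_1$. Setting $\eta = \gamma + a\delta < 1$, we get $aS_n \leq \eta \ln n$ for $n\geq n_1$, hence
\[
\frac{e^{aS_n}}{n^2} \;\leq\; \frac{1}{n^{2-\eta}},
\]
and since $2 - \eta > 1$, this tail is summable. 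Thus \eqref{cond:shepp^a} fails for this particular $a > m_f$, contradicting the hypothesis.

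I do not anticipate any genuine obstacle: the argument is a clean two-line calculation in each direction, built around the elementary block estimate $aS_n \geq \ln n$ on $[n_0, n_0^{1+\epsilon}]$. The only small subtlety is handling the convention $1/m_f = \infty$ when $m_f = 0$, which just amounts to reading \eqref{mD} as $D = \infty$ and applying the same block argument for every $a > 0$; and making sure the chosen $a$ in the converse lies strictly above $m_f$, which is exactly what the strict inequality $D < 1/m_f$ guarantees.
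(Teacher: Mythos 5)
Your proof is correct, and the forward direction \eqref{mD}$\Rightarrow$\eqref{cond:shepp^a} takes a genuinely different and somewhat cleaner route than the paper. The paper fixes $r$, works with the dyadic blocks $[(2^r-1)m+1,\,2^r m]$, and crucially exploits the hypothesis that $\{\ell_n\}$ is decreasing to derive $\sum_{k=1}^{(2^r-1)m}\ell_k \geq (1-2^{-r})\sum_{k=1}^{2^r m}\ell_k$, from which a lower bound on the block sum of $n^{-2}e^{aS_n}$ is extracted. Your argument instead works with blocks $[n_0,\,n_0^{1+\epsilon}]$ anchored at indices where $S_{n_0}/\ln n_0$ is near the limsup, and uses only that the partial sums $S_n$ are increasing (which is automatic from $\ell_n>0$). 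This buys you generality: your proof of this implication does not need the monotonicity of the sequence $\{\ell_n\}$ at all, whereas the paper's does. One cosmetic note: the appeal to "disjoint blocks" is unnecessary, since the contribution of a single block $[n_0,\,n_0^{1+\epsilon}]$ already tends to infinity with $n_0$, so the partial sums of the full series are unbounded for that reason alone. The converse direction by contraposition — bounding $aS_n \leq \eta\ln n$ with $\eta<1$ for $a$ close to $m_f$ and summing $n^{-(2-\eta)}$ — is essentially identical to the paper's argument.
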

\begin{proof}
Assume we have \eqref{mD}. 
Then for every $\varepsilon>0$ there exists a sub-sequence $\{n_k\}_{k \in \mathbb{N}}$ so that
\[
\lim_{k \to \infty}\frac{(\ell_1 + \dots + \ell_{n_k})}{\ln n_k}>\frac{1-\varepsilon
}{m_f}.
\]
Fix an integer $r\geq 1$. 
We can assume that $2^r$ divides $n_k$, since for large $n_k$ the reminder, after dividing by $2^r$ will be negligible with respect to $n_k$. Since $\ell_n$ is decreasing then
\[
2^r\sum_{k=(2^r-1)m+1}^{2^rm} \ell_k \leq \sum_{k=1}^{2^rm} \ell_k.
\]
Hence
\[
\sum_{k=1}^{(2^r-1)m}\ell_k \geq \left(1-\frac{1}{2^r}\right)\sum_{k=1}^{2^rm}\ell_k.
\]
Then
\begin{align*}
    \sum_{n=(2^r-1)m+1}^{2^{r}m}\frac{1}{n^2}e^{a(\ell_1 + \dots + \ell_n)}
    &>e^{a \left( \ell_1 + \dots + \ell_{(2^r-1)m}\right)}\sum_{n=(2^r-1)m+1}^{2^{r}m}\frac{1}{n^2} \\
    &\geq e^{a\left(1-\frac{1}{2^r}\right) \left( \ell_1 + \dots + \ell_{2^rm}\right)}\frac{1}{2^{2r} m} \\
    &\geq e^{\frac{a (1-\varepsilon)}{ m_f } { \left(1-\frac{1}{2^r}\right)}\ln 2^r m }\frac{1}{2^{2r}m} =\frac{1}{2^r} {(2^rm)}^{\frac{a(1-\varepsilon)}{m_f }\left(1-\frac{1}{2^r}\right)-1} .
\end{align*}
The last term is unbounded since for every $a>m_f$ one can choose $r\geq 1$ and $\varepsilon>0$ such that $\frac{a(1-\varepsilon)}{m_f }\left( 1-\frac{1}{2^r}\right)-1>0$.

Next, we show by contraposition that \eqref{cond:shepp^a} implies \eqref{mD}. 
Assume 
\[
\limsup_{n \to \infty}\frac{\ell_1 + \dots + \ell_n}{\ln n}<\frac{1}{m_f}.
\]
Then there is $\varepsilon>0$ so that for all sufficiently large $n \geq 1$
\[
\frac{\ell_1 + \dots + \ell_n}{\ln n}\leq \frac{1-\varepsilon}{m_f}.
\]
Let $a>m_f$. 
Then
\[
a(\ell_1 + \dots + \ell_n) \leq \frac{a(1-\varepsilon)\ln n}{m_f}= \rho \ln n
\]
where we let $\rho =a(1-\varepsilon)/m_f$.
Note that if $a$ is sufficiently close to $m_f$, then $\rho<1$.
Hence
\[
\sum_{n=1}^\infty \frac{1}{n^2}e^{a(\ell_1+\dots +\ell_n)}<\sum_{n=1}^\infty \frac{1}{n^2}e^{\rho\ln n}=\sum_{n=1}^\infty \frac{1}{n^{2-\rho}}<\infty.
\]
This contradicts \eqref{cond:shepp^a}.
\end{proof}

\begin{proof}[Proof of Theorem \ref{mainThm:ub_dvoretzky-hawkes}-(a)]
The first half of Theorem \ref{mainThm:ub_dvoretzky-hawkes}-(a) readily follows from Proposition \ref{mainProp:DtoS} and Lemma \ref{lem:shepp^a-hawkes}. 
For the second half of Theorem \ref{mainThm:ub_dvoretzky-hawkes}-(a) note that if the inequality is strict, then by Lemma \ref{lem:shepp^a-hawkes} we have that 
\[
\sum_{n=1}^\infty \frac{1}{n^2}e^{m_f(\ell_1+ \dots + \ell_n)}=\infty.
\]
follows from \cite{Fan-Karagulyan2021}*{Proposition 5.3}. 
\end{proof}



\subsection{Proof of Theorem \ref{mainThm:ub_dvoretzky-hawkes}-(b)}

Let $f\in L^1(\mathbb{T})$ be a probability density function so that $m_f>0$ and $\overline{\dim}_\mathrm{B}K_f<1$, and let $\{\ell_n\}_{n \in \mathbb{N}}$ be a sequence satisfying \eqref{mD}. 
It is enough to show the sufficiency of \eqref{mD} for $\mu_f$-Dvoretzky covering for $\mathbb{T}$. 
Set $D=D(\{\ell_n\}_{n\in \mathbb{N}})$ as in \eqref{D}. 

By Lemma \ref{lem:shepp^a-hawkes}, we have \eqref{cond:shepp^a}. 
Once we have \eqref{cond:kahane^m}, it follows from Proposition \ref{cond:fk} that the circle is $\mu_f$-Dvoretzky covered for $\{\ell_n\}_{n\in \mathbb{N}}$. 
Henceforth, we show the condition \eqref{cond:kahane^m}, that is, $\mathrm{Cap}_{m_f}(K_f)=0$.

Let $\widetilde{K_f}=m_f K_f=\{ m_fx\colon x\in K_f\}$. 
Given a Borel probability measure $\sigma$ supported on $K_f$, let $\tilde{\sigma} =\sigma \circ (m_f)^{-1}$, which defines a Borel probability measure supported on the scaled set $\widetilde{K_f}$. 
Then we see 
\begin{equation*} 
    \int_{K_f} \int_{K_f} \exp \left\{ m_f\sum_{k=1}^\infty (\ell_k - |t-s|)_+\right\} \, d\sigma(s)d\sigma(t) =\infty
\end{equation*}
is equivalent to
\begin{equation*} 
    \int_{\widetilde{K_f}} \int_{\widetilde{K_f}} \exp \left\{ \sum_{k=1}^\infty (m_f\ell_k - |u-v|)_+\right\} \, d\tilde{\sigma}(u)d\tilde{\sigma}(v)=\infty.
\end{equation*}
Namely, to show $\mathrm{Cap}_{m_f}(K_f)=0$ is equivalent to show $\mathrm{Cap} (\widetilde{K_f})=0$, that is, the Kahane capacity of  $\widetilde{K_f}$ with respect to the sequence $\{m_f\ell_n\}_{n \in \mathbb{N}}$ is zero. 

Now, since the upper box-counting dimension is invariant under scaling, one has $\overline{\dim} _\mathrm{B}\widetilde{K_f} =\overline{\dim} _\mathrm{B}K_f<1\leq m_f D=D(m_f\ell_n)$. 
Then, in view of \cite{Kahane1985}*{Theorem 4 (1) (page 160)}, there is a Dvoretzky covering (in the classical sense) of the set $\widetilde{K_f}$ by the sequence $\{m_f\ell_n\}_{n\in \mathbb{N}}$, and thus $\mathrm{Cap}(\widetilde{K_f})=0$ follows by the Kahane criterion.

\section{Proof of Theorem \ref{mainThm:perturbation}}
\addtocontents{toc}{\protect\setcounter{tocdepth}{1}}

Let $g$ be a density function. Recall that $m_g=\essinf_\mathbb{T} g$ and $K_g\subset \mathbb{T}$ is the set of essential infimum points of $g$. 

\subsection{Auxiliary covering} \label{aux_covering}

We begin with a generic inequality. 
Given a decreasing sequence of positive numbers with $\ell_n\in (0,1)$ and $N\in \mathbb{N}$, let $I_n=(\omega_n-(\ell_n/2),\omega_n+(\ell_n/2))$ and 
\[
U_N=\bigcup _{j=1}^N I_j. 
\]
Let $A$ be a compact subset of $\mathbb{T}$. 
Given $\varepsilon>0,$ we consider a minimal system of balls of radius $\varepsilon$ covering $A,$ say $B\left(x_{1}, \varepsilon\right), \ldots, B\left(x_{M}, \varepsilon\right)$ then $M=M(A,\varepsilon)\in \mathbb{N}$ is the $\varepsilon$-covering number of $A$. 
Since
\[
A \subset \bigcup_{q=1}^{M} B(x_{q}, \varepsilon)
\]
we have
\[
\Pr\left(A \not\subset U_{N}\right) \leq \sum_{q=1}^{M} \Pr\left(B\left(x_{q}, \varepsilon\right) \not\subset U_{N}\right).
\]
For every $N\in \mathbb{N}$, we have
\[
\Pr\left(B(x_q, \varepsilon) \not\subset U_{N}\right)=\prod_{j=1}^{N} \Pr\left(B(x_q, \varepsilon) \not\subset  I_{j}\right)=\prod_{j=1}^{N} \Pr \left(\omega_j \notin I_{j,q}^{\varepsilon}\right),
\]
where
\begin{equation} \label{interval}
    I_{j,q}^{\varepsilon}=\{\omega_j\in \mathbb{T} \colon B(x_q, \varepsilon) \subset I_j\}=B\left(x_q, \frac{\ell_j}{2}-\varepsilon\right).
\end{equation}
Suppose that we are given a probability density function $g\in L^1(\mathbb{T})$. 
Recall that $\mu _{g}$ denotes the distribution of $\omega _n$ so that $d\mu _{g}=g\, dt$. 
Then one has
\[
\Pr (\omega_j \not\in I_{j,q}^{\varepsilon})=1-\mu_{g}\left(I_{j,q}^{\varepsilon}\right)=1-\int_{I_{j,q}^{\varepsilon}}g(t)\, dt,
\]
and hence  
\begin{align*}
    \Pr\left(B\left(x_{q}, \varepsilon\right) \not\subset U_{N}\right) =\prod_{j=1}^{N}\Pr (\omega_j \not\in I_{j,q}^{\varepsilon}) 
    =\prod_{j=1}^{N}\left( 1-\mu_{g}\left(I_{j,q}^{\varepsilon}\right)\right) &\leq \exp\left( -\sum_{j=1}^{N}\mu_{g}\left(I_{j,q}^{\varepsilon}\right) \right) \\ 
    &= \exp \left(-\sum_{j=1}^{N} \int_{I_{j,q}^{\varepsilon}}g(t)\,dt\right)
\end{align*}
for each $q\in \{ 1,\dots ,M\}$. 
Therefore one has
\begin{equation} \label{ineq:A1}
\Pr\left(A \not\subset U_{N}\right) \leq \sum_{q=1}^M \exp \left(-\sum_{j=1}^{N} \int_{I_{j,q }^{\varepsilon}}g(t)\,dt\right) 
\end{equation}
for every $N\in \mathbb{N}$. 

Inequality \eqref{ineq:A1} will be key in obtaining $\mu_{g}$-Dvoretzky covered for the set $K_g$ for some $g$, to be constructed below.
Note that the assumption \eqref{mD} implies that for any $a>m_f$ 
\begin{equation}\label{condition:ln}
    \limsup _{n\to \infty}\frac{1}{n} \exp \{ a\left(\ell_{1}+\cdots+\ell_{n}\right) \} =\infty.
\end{equation}
Then there is an infinite set $\Lambda \subset \mathbb{N}$ such that for every $n\in \Lambda$ one has $\ell _n>1/(2n)$. 
Indeed, if we let $u_n=\frac{1}{n} \exp \left(\ell_{1}+\cdots+\ell_{n}\right) $ and $\Lambda =\{ n\in \mathbb{N} \colon u_n\geq \sup _{m<n}u_m\}$, then it follows from \eqref{condition:ln} with $a=1$ (note that $m_f<1$) that $\#\Lambda =\infty $ and $\lim _{n\in \Lambda}u_n=\infty$, and hence for every $n\in \Lambda$ one has  $\ell_n\geq \ln \frac{n}{n-1} >\frac{1}{2n}$ since $u_n\geq u_{n-1}$. 
We now state a key lemma:
\begin{lemma} \label{lem:main}
    Let $\rho \in [0,1]$, $\{\ell_n\}_{n\in \mathbb{N}}$ be a decreasing sequence with $\sum_{n=1}^\infty\ell_n=\infty$, and $A\subset \mathbb{T}$ a non-empty compact set with $|A|=0$. 
    Then there exist a probability density function $g\in L^1(\mathbb{T})$ with $m_g=\rho$ and $K_g=A$, a sequence of blocks of consecutive integers $\left\{ \left[n_1^{(k)}, n_{2}^{(k)}\right]\cap \mathbb{N}\right\}_{k\in \mathbb{N}}$ with $n_{2}^{(k)}\in \Lambda$, and a sequence of collections of balls $\left\{\mathcal{B}^{(k)}=\left\{B_q^{(k)}\right\}_{q=1}^{M_k}\right\}_{k\in \mathbb{N}}$ with radii $\{\varepsilon_k\}_{k\in \mathbb{N}}$ and  $\varepsilon_k \in \left[\ell_{n_2^{(k)}}/4, \ell_{n_2^{(k)}}/2\right)$ such that
    \begin{enumerate}
    \item[1)] For every $k\in \mathbb{N}$ and some $C>0$
    \[K_f \subset \cup \mathcal{B}^{(k)}=\cup_{q=1}^{M_k} B_q^{(k)}\]
    and $M_k=\# \mathcal{B}^{(k)}\leq C\varepsilon_k^{-\overline{\dim}_\mathrm{B} A}$. 
    \item[2)] For every $k\in \mathbb{N}$, 
    \[
    \frac{\ell_{1} + \dots + \ell_{n_{1}^{(k)}}}{\ell_{n_{1}^{(k)}} + \dots + \ell_{n_{2}^{(k)}}}<\frac{1}{2^k}.
    \]
    \item[3)] 
    For every $k\in \mathbb{N}$ and every $j \in \left[n_1^{(k)}, n_{2}^{(k)}\right]\cap \mathbb{N}$, we have that \[\int_{I_{j,q}^{\varepsilon_k}}g(t)\,dt\geq |I_{j,q}^{\varepsilon_k}|,\] 
    where $I_{j,q}^{\varepsilon_k}\subset B_q^{(k)}$ is the ball of the from \eqref{interval} around $x_q$, where  $x_q$ is the center of the ball $B_q^{(k)}\in \mathcal{B}^{(k)}$. 
    \end{enumerate}
\end{lemma}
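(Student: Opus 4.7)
The plan is to construct the combinatorial data $(n_1^{(k)}, n_2^{(k)}, \varepsilon_k, \mathcal{B}^{(k)})$ inductively, then define $g$ explicitly. At each stage $k$, having fixed $n_2^{(k-1)}$, I first pick $n_1^{(k)} > n_2^{(k-1)}$ large enough that $\ell_{n_1^{(k)}}$ is small (the precise smallness dictated by the mass-summability discussed below). Since $\Lambda$ is infinite (as shown in the paragraph preceding the lemma) and $\sum_n \ell_n = \infty$, I then pick $n_2^{(k)} \in \Lambda$ with $n_2^{(k)} > n_1^{(k)}$ so large that
\[
\ell_{n_1^{(k)}} + \cdots + \ell_{n_2^{(k)}} > 2^k\bigl(\ell_1 + \cdots + \ell_{n_1^{(k)}}\bigr),
\]
which is exactly condition (2). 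I set $\varepsilon_k = \ell_{n_2^{(k)}}/4 \in [\ell_{n_2^{(k)}}/4, \ell_{n_2^{(k)}}/2)$ and let $\mathcal{B}^{(k)} = \{B(x_q^{(k)}, \varepsilon_k)\}_{q=1}^{M_k}$ be a minimal $\varepsilon_k$-cover of $A$ with $x_q^{(k)} \in A$; the bound $M_k \leq C\varepsilon_k^{-\overline{\dim}_\mathrm{B} A}$ is arranged by working along a subsequence of levels on which the $\limsup$ defining $\overline{\dim}_\mathrm{B} A$ is realised.

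For the density, I take
\[
g(x) = \rho + \beta\, d(x, A) + \sum_{k \geq 1}\sum_{q=1}^{M_k} V_k\, \mathbf{1}_{B(y_q^{(k)}, r_k)}(x),
\]
where the bump centre $y_q^{(k)} = x_q^{(k)} + \theta_k$ is displaced from $x_q^{(k)}$ with $r_k < \theta_k$ and $\theta_k + r_k < \ell_{n_2^{(k)}}/2 - \varepsilon_k$; hence each bump avoids $x_q^{(k)}$ but sits inside $I_{n_2^{(k)}, q}^{\varepsilon_k}$, and thus inside every larger $I_{j,q}^{\varepsilon_k}$ with $j \in [n_1^{(k)}, n_2^{(k)}]$. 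The bump height is chosen so that each bump has mass $V_k \cdot 2 r_k = (1-\rho)\ell_{n_1^{(k)}}$. The constant $\beta > 0$ is then chosen so that $\int_\mathbb{T} g = 1$; the barrier $\beta\, d(\cdot, A)$ vanishes exactly on $A$ and is strictly positive off $A$.

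Verification is then essentially a direct computation. For (3), since the bump at $y_q^{(k)}$ lies inside $I_{j,q}^{\varepsilon_k}$ for every $j \in [n_1^{(k)}, n_2^{(k)}]$,
\[
\int_{I_{j,q}^{\varepsilon_k}} g \geq \rho\, |I_{j,q}^{\varepsilon_k}| + (1-\rho)\ell_{n_1^{(k)}} \geq |I_{j,q}^{\varepsilon_k}|,
\]
using $\ell_{n_1^{(k)}} \geq \ell_j \geq |I_{j,q}^{\varepsilon_k}|$. For $m_g = \rho$ and $K_g = A$, the barrier $d(\cdot, A)$ does the work: for $x \in A$ it vanishes and attains arbitrarily small values in every $B(x, r)$, giving $\essinf_{B(x,r)} g \to \rho$; for $x \notin A$ its strict positivity and continuity force $\essinf_{B(x,r)} g > \rho$ once $r$ is small enough. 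The displaced bumps do not corrupt either conclusion, because at points of $A$ the bumps sit at positive distance from the centre and for each $x \notin A$ all but finitely many bumps eventually lie at positive distance from $x$.

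The main obstacle is the mass-summability requirement $\sum_k M_k\, \ell_{n_1^{(k)}} < 1/(1-\rho)$ that makes the total bump mass leave room for $\int g = 1$. Since $M_k \asymp \ell_{n_2^{(k)}}^{-\overline{\dim}_\mathrm{B} A}$ and condition (2) forces $n_2^{(k)}$ very large, one has to balance the choice of $n_1^{(k)}$ against the resulting $M_k$. The inductive procedure handles this because $n_1^{(k)}$ is fixed first---so $\ell_{n_1^{(k)}}$ can be made as small as one wishes before committing to $n_2^{(k)}$---and then $n_2^{(k)} \in \Lambda$ is taken only as large as (2) demands, no more. The hypothesis $|A| = 0$ enters by allowing the supports of all the bumps and of $d(\cdot, A)$ to be squeezed into arbitrarily thin neighbourhoods of $A$, so that these constructions do not interfere with one another.
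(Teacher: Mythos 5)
The central difficulty that you flag — the mass-summability requirement $\sum_k M_k\,\ell_{n_1^{(k)}} < 1/(1-\rho)$ — is not in fact handled by the inductive procedure you describe, and this is where the proposal breaks. The problem is that condition (2) couples $n_2^{(k)}$ to $n_1^{(k)}$ in the wrong direction for your scheme: one needs $\ell_{n_1^{(k)}}+\dots+\ell_{n_2^{(k)}} > 2^k(\ell_1+\dots+\ell_{n_1^{(k)}})$, and since the right-hand side grows without bound as $n_1^{(k)}\to\infty$, the block must be enormously longer than the prefix. Concretely, for $\ell_n = 1/n$ one gets $\ln n_2^{(k)} \gtrsim (1+2^k)\ln n_1^{(k)}$, so $n_2^{(k)} \gtrsim (n_1^{(k)})^{1+2^k}$. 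Then $\varepsilon_k \asymp \ell_{n_2^{(k)}}$ is tiny, $M_k \asymp \varepsilon_k^{-d}$ with $d=\overline{\dim}_{\mathrm B}A$ is huge, and
\[
M_k\,\ell_{n_1^{(k)}} \;\gtrsim\; \bigl(n_2^{(k)}\bigr)^{d}\big/n_1^{(k)} \;\gtrsim\; \bigl(n_1^{(k)}\bigr)^{d(1+2^k)-1},
\]
which for any $d>0$ and large $k$ \emph{increases} without bound as $n_1^{(k)}\to\infty$. Enlarging $n_1^{(k)}$ therefore makes the product worse, not better, and the total bump mass cannot be kept below $1-\rho$. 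The lemma assumes only $|A|=0$, so $\overline{\dim}_{\mathrm B}A$ can be anything in $(0,1]$, and your construction fails for all such $A$.

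The underlying design flaw is that your baseline near $A$ is $\rho<1$, so every ball $I_{j,q}^{\varepsilon_k}$ starts with a deficit $(1-\rho)|I_{j,q}^{\varepsilon_k}|$ that must be made up by local bump mass, and the total cost of those bumps is uncontrollable. The paper's construction inverts this logic: it sets the density to a constant $\gamma>1$ on almost all of $\mathbb T$, and the values close to $\rho$ live on shrinking balls $\mathcal Y^{(k)}$ that are kept at distance $\delta_k>\ell_{n_1^{(k)}}$ from $A$ (with $\delta_k = d(Y^{(k)},A)$ for a carefully chosen transversal $Y^{(k)}$). Because the $\mathcal Y^{(i)}$ with $i\le k$ then miss every $I_{j,q}^{\varepsilon_k}$, one gets $\int_{I_{j,q}^{\varepsilon_k}} g \ge \gamma(1-\zeta)|I_{j,q}^{\varepsilon_k}| \ge |I_{j,q}^{\varepsilon_k}|$ for free, with no extra mass deposited inside the balls, and the global budget only needs the dip regions to have small measure — a constraint that is trivially met by taking the radii $\eta_k$ tiny. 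A smaller but related issue in your write-up: displacing the bump from $x_q^{(k)}$ guarantees it avoids $x_q^{(k)}$, but not that it avoids the rest of $A$; since $|A|=0$ one can always find a gap, but you would need to say so, and more importantly the bumps' heights $V_k\to\infty$ accumulating on $A$ make the verification of $K_g=A$ more delicate than the remark "the displaced bumps do not corrupt either conclusion" suggests. Finally, the bound $M_k\le C\varepsilon_k^{-\overline{\dim}_{\mathrm B}A}$ in item (1) does not follow from the definition of $\overline{\dim}_{\mathrm B}$ for a fixed constant $C$ and all $k$ (the $\limsup$ gives only $M_k\le \varepsilon_k^{-(d+\epsilon)}$ for $\epsilon>0$); the paper has the same looseness but in the end only uses the trivial $M_k\lesssim \varepsilon_k^{-1}$, so this is cosmetic.
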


We will postpone the proof of this lemma to Section \ref{proof:mainlema}, and proceed the argument for the proof of Theorem \ref{mainThm:perturbation}. 
We use inequality \eqref{ineq:A1} with Lemma \ref{lem:main} to show that $K_{g}$ is $\mu_{g}$-Dvoretzky covered.  
For each family $\mathcal{B}^{(k)}$ of balls of radius $\varepsilon_k$, it follows from Lemma \ref{lem:main}-(2,4) that 
\begin{align*}
    \sum_{j=1}^{N} \int_{I_{j,q}^{\varepsilon_k}}g(t)\,dt
    &=\left(\sum_{j=n_1^{(1)}}^{n_2^{(1)}}+\sum_{j=n_1^{(2)}}^{n_2^{(2)}}+\dots +\sum_{j=n_1^{(k)}}^{n_2^{(k)}}+\sum_{\substack{j=1,\\j\notin D}}^{N}\right)\int_{I_{j,q}^{\varepsilon_k}}g(t)\,dt \\
    &\geq \sum_{j=n_1^{(k)}}^{n_2^{(k)}}\int_{I_{j,q}^{\varepsilon_k}}g(t)\,dt \geq \sum_{j=n_1^{(k)}}^{n_2^{(k)}}(\ell_j-2\varepsilon_k) \geq \frac{1}{1+2^{-k}}\sum_{j=1}^{n_2^{(k)}}\ell_j-\sum_{j=1}^{n_2^{(k)}}2\varepsilon_k 
\end{align*}
for every $N\in \mathbb{N}$, where $D=\cup _{i=1}^k([n_1^{(i)}, n_{2}^{(i)}]\cap \mathbb{N})$. 
Hence by \eqref{ineq:A1} one has
\begin{align} \label{ineq:A2}
    \Pr\left(A \not\subset U_{N}\right) 
    &\leq \sum_{q=1}^{M_k} \exp \left(-\sum_{j=1}^{N} \int_{I_{j,q}^{\varepsilon_k}}g(t)\,dt\right) \nonumber \\
    &\leq \sum_{q=1}^{M_k} \exp \left\{-\left(\frac{1}{1+2^{-k}}\sum_{j=1}^{n_2^{(k)}}\ell_j-\sum_{j=1}^{n_2^{(k)}}2\varepsilon_k\right)\right\} \nonumber \\
    &=\exp \left\{-\left(\frac{1}{1+2^{-k}}\sum_{j=1}^{n_2^{(k)}}\ell_j-\sum_{j=1}^{n_2^{(k)}}2\varepsilon_k\right)+\ln M_k\right\}
\end{align}
for every $N\in \mathbb{N}$ with $n_2^{(k)}\in \Lambda$. 
Taking $\varepsilon_k=1/(2n_2^{(k)})$, one has
\[
-\frac{1}{1+2^{-k}} \sum_{j=1}^{n_2^{(k)}} \ell_j + \sum_{j=1}^{n_2^{(k)}} \frac{1}{n_2^{(k)}}+\ln M_k\leq -\frac{1}{1+2^{-k}} \sum_{j=1}^{n_2^{(k)}} \ell_j +1 +\ln(2n_2^{(k)})
\]
as $\overline{\dim}_\mathrm{B}A\leq 1$. 
Since the right-hand side is less than zero by \eqref{mD}, it follows that $\Pr (A\not\subset U_N)\to 0$ as $N\to \infty$, that is $K_{g}(=A)$ is $\mu_{g}$-Dvoretzky covered. 

\subsection{Concluding the proof of Theorem \ref{mainThm:perturbation}} 

Henceforth, we will define a function $f_0$ having the desired properties as stated.
We choose a set $B\subset \mathbb{T}\setminus K_f$ so that $|B|<\varepsilon/2$ and $f(x)\geq 1$ for all $x \in B$ (recall that $\|f\|_{L^1(\mathbb{T})}=1$). 
Since $|K_f|=0$, we then take a covering $\mathcal{U}=\{ U_i\} _{i=1}^k$ of $K_f$ such that $|\cup _{i=1}^k U_i|/|B|<1-m_f$. 
Note that one can assume $B\cap \cup _{i=1}^k U_i=\emptyset $. 
By Lemma \ref{lem:main} with $\rho =m_f$ and $A=K_f\cap U_i$, one obtains a function $g_i$ so that $m_{g_i}=m_{f}$ and  $K_{g_i}=K_f\cap U_i$ for $i=\{ 1,\dots,k\}$. 
Let $U=\cup _{i=1}^k U_i$. 
Define  
\[
g_{U}=g_1|_{U_1}+\cdots +g_k|_{U_k},
\]
and set
\[
g=g_U + f|_{{\mathbb{T}\setminus U}}. 
\]
Then one has $m_{g}=m_f$, $K_{g}=K_f$, and 
\[
\|g\|_{L^1(\mathbb{T})}=\|f\|_{L^1(\mathbb{T})}-\int_U f\, dx + \int_U g_U\, dx=1-\int_U f\, dx + \int_U g_U\, dx. 
\]
Let $c=-\int_U f\, dx + \int_U g_U\, dx$, and define $f_0 =g - \frac{c}{|B|}\mathbbm{1}_B$. 
Then we have 
\[
|\{ x\in \mathbb{T}\colon f(x)\neq f_0(x)\}|\leq |U|+|B|<2|B|<\varepsilon. 
\]
Next, we will show $m_{f_0}=m_f$ and $K_{f_0}=K_f$. 
If $c\leq 0$, then it is clear from the definition of $f_0$ that $f_0|_B\geq g|_B$ and $f_0|_U=g|_U=g_U$. 
Suppose $c>0$. 
Note that $\int_U f\, dx<\int_U g_U\, dx\leq |U|$, and hence $c \leq |U|$. 
It follows that for $x\in B$ we have
\[
(f_0|_B)(x) \geq f(x)-\frac{c}{|B|}\geq 1-\frac{|U|}{|B|}>m_f.
\]
This implies $m_{f_0}=m_f$ and $K_{f_0}=K_f$ in this case as well. 

We also have 
\[
\|f_0\|_{L^1(\mathbb{T})}=\|g\|_{L^1(\mathbb{T})}-c=1+c-c=1.
\]
It remains to show that the circle $\mathbb{T}$ is $\mu _{f_0}$-Dvoretzky covered by a sequence satisfying \eqref{mD}. 
Note first that, by \eqref{mD} and Lemma \ref{lem:shepp^a-hawkes}, one has 
\[
\sum_{n=1}^{\infty} \frac{1}{n^{2}} \exp \{ a\left(\ell_{1}+\cdots+\ell_{n}\right)\} =\infty 
\]
for every $a>m_{f_0}(=m_f)$. 
Thus by \cite{Fan-Karagulyan2021}*{Proposition 5.1} the set $\mathbb{T}\setminus K_{f_0}$ is $\mu_{f_0}$-Dvoretzky covered. 
Next, note that $f_0|_U=g|_U=g_1|_{U_1}+\cdots +g_k|_{U_k}$. 
Here, note also that $U=\cup _{i=1}^k U_i$ and $K_{g_i}=K_f\cap U_i=K_{f_0}\cap U_i$ is $\mu_{g_i}$-Dvoretzky covered for each $i=\{ 1,\dots,k\}$ by the argument in Section \ref{aux_covering}. 
Hence $K_{f_0}$ is $\mu_{f_0}$-Dvoretzky covered.
Theorem \ref{mainThm:perturbation} is obtained.

\subsubsection{Proof of lemma \ref{lem:main}} \label{proof:mainlema}

For each $k\in \mathbb{N}$, let 
\[
\mathcal{D}^{(k)}=\left\{ \left[ \frac{i}{2^k},\frac{i+1}{2^k}\right) \subset \mathbb{T}\colon i=0,\dots ,2^k-1\right\}
\]
and $\mathcal{D}^{(k)}_A=\{ J\in \mathcal{D}^{(k)}\colon J\cap A\neq \emptyset \}$. 

For each $J\in \mathcal{D}^{(k)}_A$, choose an arbitrary point $y^{(k)}_J\in J\setminus A$, and let $Y^{(k)}$ be the collection of all such points, i.e.,
\[
Y^{(k)}=\left\{y_J^{(k)}\in \mathbb{T}\colon J\in \mathcal{D}^{(k)}_A\right\}.
\]
Sometimes, we may denote $y\in Y^{(k)}$ by $y^{(k)}$ to specify its generation. 
Note that $\#Y^{(k)}\leq 2^k$. 
Let 
\[
\delta_k=d\left( Y^{(k)},A\right). 
\]
Note that $\delta_k>0$ for all $k \in \mathbb{N}$. 

For each $k\in \mathbb{N}$, we will define sequences of integers $n_1^{(k)}, n_2^{(k)}\in \mathbb{N}$, positive small numbers $\eta_k>0$, $\varepsilon_k>0$, and finite families $\mathcal{Y}^{(k)}$  and $\mathcal{B}^{(k)}$ of balls of radii $\eta_k$ and $\varepsilon_k$, respectively.   
First, we take $n_1^{(1)}\in \mathbb{N}$ so large that $\ell_{n_1^{(1)}}\leq \delta_1/4$.
We next choose $n_2^{(1)}\in \Lambda$ and blocks $(\ell_{n_1^{(1)}}, \dots, \ell_{n_{2^{(1)}}})$ so that 
\begin{equation} \label{ratio-1}
    \frac{\ell_{1} + \dots + \ell_{n_{1}^{(1)}}}{\ell_{n_{1}^{(1)}} + \dots + \ell_{n_{2}^{(1)}}}<\frac{1}{2}.
\end{equation}
Note that such a choice is possible since $\sum_{n=1}^\infty \ell_n = \infty$.
Take $\eta_1\in (0,\delta_1/4]$, and let $\mathcal{Y}^{(1)}=\{ B(y,\eta_1)\colon y\in Y^{(1)}\}$. 
Note that for every ball $B$ of $\mathrm{diam} B\leq \delta_1/4$ which intersects $A$, we have 
\begin{equation} \label{disjoint-1}
    B(y,\eta_1) \cap B = \emptyset
\end{equation} 
for every $B(y,\eta_1)\in \mathcal{Y}^{(1)}$. 
To conclude the first step of our construction, we take $\varepsilon_1 \in \left[\ell_{n_2^{(1)}}/4,\ell_{n_2^{(1)}}/2\right)$ to be the dyadic number so that $-\log_2\varepsilon_1\in \mathbb{N}$, and set $\mathcal{B}^{(1)}=\mathcal{D}^{(-\log_2\varepsilon_1)}_A$. 
Hence $A\subset \sqcup \mathcal{B}^{(1)}$. 
Note that every $B\in \mathcal{B}^{(1)}$ fulfills \eqref{disjoint-1} since 
\[
\mathrm{diam}B=2\varepsilon_1<\ell_{n_2^{(1)}}<\ell_{n_1^{(1)}}\leq \delta_1/4. 
\]
One obtains a collection $\delta_1, n_1^{(1)}, n_2^{(1)}, \eta_1, \varepsilon_1,\mathcal{Y}^{(1)},  \mathcal{B}^{(1)}$. 

Next, we take $n_1^{(2)}\in \mathbb{N}$ so large that $\ell_{n_1^{(2)}}\leq \delta_2/4$. 
Here one can assume $\delta_2\leq \delta_1/2$. 
Then choose $n_2^{(2)}\in \Lambda$ and blocks $\left( \ell_{n_1^{(2)}}, \dots, \ell_{n_{2}^{(2)}}\right)$ so that 
\begin{equation} \label{ratio-2}
    \frac{\ell_{1} + \dots + \ell_{n_{1}^{(2)}}}{\ell_{n_{1}^{(2)}} + \dots + \ell_{n_{2}^{(2)}}}<\frac{1}{2^2}.
\end{equation}
Take $\eta_2\in (0,\delta_2/4]$, and define $\mathcal{Y}^{(2)}=\{B(y,\eta_2)\colon y\in Y^{(2)}\}$.
It follows that for every ball $B$ of $\mathrm{diam} B\leq \delta_2/4$ which intersects $A$, we have 
\begin{equation} \label{disjoint-2}
    B(y^{(i)},\eta_i) \cap B = \emptyset
\end{equation} 
for every $B(y^{(i)},\eta_i)\in \mathcal{Y}^{(i)}$, $i\in \{1,2\}$.
Take $\varepsilon_2 \in \left[\ell_{n_2^{(2)}}/4,\ell_{n_2^{(2)}}/2\right)$ to be the dyadic number so that $-\log_2\varepsilon_2\in \mathbb{N}$, and set $\mathcal{B}^{(2)}=\mathcal{D}^{(-\log_2\varepsilon_2)}_A$. 
Hence $A\subset \sqcup \mathcal{B}^{(2)}$. 
Note that every $B\in \mathcal{B}^{(2)}$ fulfills \eqref{disjoint-2} since
\[
\mathrm{diam}B=2\varepsilon_2<\ell_{n_2^{(2)}}<\ell_{n_1^{(2)}}\leq \delta_2/4. 
\]
Note also that every $B\in \mathcal{B}^{(2)}$ fulfills \eqref{disjoint-1} as well since 
\[
\mathrm{diam}B<\delta_2/4\leq \delta_1/8. 
\]
Hence we obtain a collection $\delta_2, n_1^{(2)}, n_{2}^{(2)}, \eta_2, \varepsilon_2, \mathcal{Y}^{(2)}, \mathcal{B}^{(2)}$. 

Repeating the procedure above, we will define $\delta_k, n_1^{(k)}, n_{2}^{(k)}, \eta_k, \varepsilon_{k}, \mathcal{Y}^{(k)}, \mathcal{B}^{(k)}$ for every $k\in \mathbb{N}$. 
More specifically, on the $k$-th step, take $n_1^{(k)}\in \mathbb{N}$ so large that $\ell_{n_1^{(k)}}\leq \delta_k/4$. 
As earlier, we can assume $\delta_k\leq \delta_{k-1}/2$. 
Then choose $n_2^{(k)}\in \Lambda$ and blocks $\left( \ell_{n_1^{(k)}}, \dots, \ell_{n_{2}^{(k)}}\right)$ so that 
\begin{equation} \label{ratio-k}
    \frac{\ell_{1} + \dots + \ell_{n_{1}^{(k)}}}{\ell_{n_{1}^{(k)}} + \dots + \ell_{n_{2}^{(k)}}}<\frac{1}{2^k}.
\end{equation}
Take $\varepsilon_k \in \left[\ell_{n_2^{(k)}}/4,\ell_{n_2^{(k)}}/2\right)$ to be the dyadic number so that $-\log_2\varepsilon_k\in \mathbb{N}$, and set a covering of $K$ by $\mathcal{B}^{(k)}=\mathcal{D}^{(-\log_2\varepsilon_k)}_K$. 
Let $\eta_k\in (0,\delta_k/4]$, and $\mathcal{Y}^{(k)}=\{B(y,\eta_k)\colon y\in Y^{(k)}\}$.
Then we see that every ball $B$ of $\mathrm{diam} B\leq \delta_k/4$ which intersects $A$, we have 
\begin{equation} \label{disjoint-k}
    B(y^{(i)},\eta_i) \cap B = \emptyset
\end{equation} 
for every $B(y^{(i)},\eta_i)\in \mathcal{Y}^{(i)}$, $i\in \{1,\dots,k\}$.
Every ball $B\in \mathcal{B}^{(k)}$ fulfills \eqref{disjoint-k} as well. 

Let $M(A,\varepsilon_k)$ be the minimum covering number for $A$ by balls of radius  $\varepsilon_k$. 
Then one has  $\#\mathcal{B}_k/2\leq M(A,\varepsilon_k)\leq \#\mathcal{B}_k$. 
Hence $\#\mathcal{B}_k\approx M(A,\varepsilon_k)\lesssim  \varepsilon_k^{-\overline{\dim }_\mathrm{B}A}$ as $k\to \infty$. 

Now, we define a function $g\in L^1_+(\mathbb{T})$.
For each $k\in \mathbb{N}$, let $\rho_k=\rho +(1/k)$. 
For $x\in \cup_{k=1}^{\infty}\cup \mathcal{Y}^{(k)}$, define
\[
g(x)=\begin{cases} \rho_{k(x)}, & \#\{k\colon x\in \cup \mathcal{Y}^{(k)} \}<\infty, \\ \rho ,& \text{otherwise},\end{cases}
\]
where $k(x)=\max \{k\in \mathbb{N} \colon x\in \cup \mathcal{Y}^{(k)}\}$. 
We set the value of $g$ on $\mathbb{T}\setminus (\cup _{n\in \mathbb{N}}\cup \mathcal{Y}^{(n)})$ to be the constant, say $\gamma >0$, so that $\|g\|_{L^1}=1$. 
Note that $\gamma \geq 1\geq m_g$. 

It remains to show item (3). 
Let $B(x_q,\varepsilon_k)\in \mathcal{B}^{(k)}$, and consider $I_{j,q}^{\varepsilon_k}$ for $j\in \{n_1^{(k)},\dots ,n_2^{(k)}\}$. 
Recall here that $I_{j,q}^{\varepsilon_k}=B(x_q,(\ell_j/2)-\varepsilon_k)$. 
Note that any ball $B(y^{(i)},\eta_i)\in \mathcal{Y}^{(i)}$ for $i\leq k$ never intersects $I_{j,q}^{\varepsilon_k}$ since
\[
\mathrm{diam}I_{j,q}^{\varepsilon_k}=\ell_j-2\varepsilon_k\leq \ell_{n_1^{(k)}}-2\varepsilon_k<\ell_{n_1^{(k)}}\leq \delta_k/4.
\]
Hence
\begin{align*} 
    \int_{I_{j,q}^{\varepsilon_k}}g\, dt
    &=\int_{I_{j,q}^{\varepsilon_k}\cap \left(\cup_{i>k}^{\infty}\cup\mathcal{Y}^{(i)}\right)}g\, dt + \int_{I_{j,q}^{\varepsilon_k}\setminus\left( \cup_{i>k}^{\infty}\cup\mathcal{Y}^{(i)}\right)}g\, dt \\
    &\geq \int_{I_{j,q}^{\varepsilon_k}\setminus \left(\cup_{i>k}^{\infty}\cup\mathcal{Y}^{(i)}\right)}g\, dt \\
    &\geq \gamma \left| I_{j,q}^{\varepsilon_k}\setminus \left(\cup_{i>k}^{\infty}\cup\mathcal{Y}^{(i)}\right)\right| \\
    &\geq \gamma \left(\left| I_{j,q}^{\varepsilon_k}\right| -\sum_{i>k}^{\infty}\sum_{B(y,\eta_i)\in \mathcal{Y}^{(i)}}|B(y,\eta_i)|\right) \\
    &\geq \gamma \left(\left| I_{j,q}^{\varepsilon_k}\right| -\sum_{i>n}^{\infty}2^i2\eta_i\right) =\gamma \left| I_{j,q}^{\varepsilon_k}\right| \left(1-\frac{\sum_{i>k}^{\infty}2^{i+1}\eta_i}{|I_{j,q}^{\varepsilon_k}|}\right).
\end{align*}
Given $\zeta \in (0,1)$, one can take $\{\eta_k\}_k$ so that $(\sum_{i>k}^{\infty}2^{i+1}\eta_i)/|I_{j,q}^{\varepsilon_k}|<\zeta $, and thus item (3) is obtained.

\section{Proof of Theorem \ref{mainThm:h_d_fail1}}

The idea is to keep the lower box-counting dimension of $K_f$ larger than $(\ell_1 + \dots +\ell_n)/\ln n$. Because of this $K_f$ will always be too large to be covered by $\{\ell_n\}_{n\in \mathbb{N}}$. When the expression $(\ell_1 + \dots +\ell_n)/\ln n$ approaches $1/m_f$ and the sequence $\{\ell_k\}_{k=1}^n$ starts exhibiting strong covering properties, we increased the lower box-counting dimension of $K_f$ at scales close to $\ell_n$ so that it will be even closer to $1/m_f$. Thus, the sequence will not be strong enough to cover $K_f$ at these scales.  After this, we drop the value of $(\ell_1 + \dots +\ell_n)/\ln n$ by creating long intervals without any $\ell_n$'s in them. Then, we also drop the lower box-counting dimension of $K_f$ in these scales, so that the Hausdorff dimension of $K_f$ will eventually be less than $1$.

We now state a lemma:

\begin{lemma} \label{lem:inhomocantor}
Let $\{ \varepsilon_k\} _{k\in \mathbb{N}}$ be a sequence so that $\varepsilon_k\searrow 0$ as $k\to \infty$. 
Then there exists a compact set $A\subset \mathbb{T}$ with $\dim _\mathrm{H}A<\overline{\dim} _\mathrm{B}A=1$, a Borel probability measure $\sigma _0$ supported on $A$, and a sequence of disjoint intervals $\{ [u_k,v_k]\} _{k\in \mathbb{N}}$ with $u_k>0$, $v_{k+1}<u_k$, and $u_k/v_k<c_0$ for some constant $c_0>0$ such that for all $t \in A$ and $r \in [u_k,v_k]$,  $\sigma_0(B(t,r))\leq c_1 r^{1-(\varepsilon_k/4)}$ for some constant $c_1>0$. 
\end{lemma}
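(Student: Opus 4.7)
The plan is to construct $A$ as an inhomogeneous Moran-like set on $\mathbb{T}$ built in two alternating phases per stage, and to take $\sigma_0$ as the weak-$*$ limit of the uniform probability measures on the surviving stage-$k$ intervals. I would fix $c_0=1/2$, set $u_k=v_k/2$, and choose $v_{k+1}=u_k^{\alpha_k}$ with $\alpha_k:=8/\varepsilon_{k+1}$ (starting from $v_1$ as small as convenient). Then $u_k/v_k=c_0$, the containment $v_{k+1}<u_k$ is automatic, and the intervals $[u_k,v_k]$ are pairwise disjoint as required.

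The recursion: start with $Q_1\asymp v_1^{-(1-\varepsilon_1/4)}$ disjoint length-$v_1$ ``parents''. At stage $k$, Phase A subdivides each surviving length-$v_k$ interval into two equal length-$u_k$ pieces (keeping both, or a suitable subfamily) so that the uniform-on-children measure attains the Frostman scaling $r^{1-\varepsilon_k/4}$ throughout $r\in[u_k,v_k]$; Phase B then subdivides each length-$u_k$ child into $u_k/v_{k+1}$ equal length-$v_{k+1}$ grandchildren and retains the \emph{leftmost consecutive block} of $L_k$ of them, where $L_k$ is chosen as the smallest integer $\asymp v_{k+1}^{-(1-\varepsilon_{k+1}/4)}u_k^{1-\varepsilon_k/4}$ which propagates Frostman to the scale $v_{k+1}$. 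Admissibility $L_k\le u_k/v_{k+1}$ reduces to $v_{k+1}^{\varepsilon_{k+1}/4}u_k^{-\varepsilon_k/4}\le 1$, which follows from $v_{k+1}<u_k$ and the monotonicity $\varepsilon_{k+1}\le\varepsilon_k$. Setting $A=\bigcap_k E_k$, with $E_k$ the union of surviving stage-$k$ intervals, produces the desired compact set, and $\sigma_0$ is the weak-$*$ limit of the uniform probability measures on $E_k$.

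First I would verify the Frostman bound $\sigma_0(B(t,r))\le c_1 r^{1-\varepsilon_k/4}$ on $[u_k,v_k]$ by a direct counting argument: a ball of radius $r$ meets $O(1+r/u_k)$ length-$u_k$ children in a single length-$v_k$ parent, and each carries mass $\asymp 1/N_k$ with $N_k\asymp u_k^{-(1-\varepsilon_k/4)}$; rearranging yields the claimed bound with a universal constant $c_1$. The identity $\overline{\dim}_\mathrm{B}A=1$ is then immediate, since the Frostman bound at $r=v_k$ forces $M(A,v_k)\gtrsim v_k^{-(1-\varepsilon_k/4)}$ and hence $\log M(A,v_k)/\log(1/v_k)\to 1$ as $\varepsilon_k\searrow 0$.

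The main obstacle is the strict inequality $\dim_\mathrm{H}A<1$. The key observation is that the $L_k$ retained grandchildren are \emph{packed} at the left end of their parent, so $A$ is already covered by only $N_k$ intervals of diameter $L_kv_{k+1}\asymp v_{k+1}^{\varepsilon_{k+1}/4}u_k^{1-\varepsilon_k/4}$ rather than by the naive $N_kL_k$ intervals of length $v_{k+1}$. Substituting into the Hausdorff sum and using $v_{k+1}=u_k^{\alpha_k}$ together with $s\alpha_k\varepsilon_{k+1}/4=2s$ yields
\[
N_k(L_kv_{k+1})^s\;\asymp\;u_k^{-(1-s)(1-\varepsilon_k/4)+2s}.
\]
The exponent equals $3s-1+\varepsilon_k(1-s)/4$ and is strictly positive for every $s>1/3$ once $\varepsilon_k$ is small, so the cover sum tends to $0$ along $k\to\infty$, giving $\mathcal{H}^s(A)=0$ for all $s>1/3$ and hence $\dim_\mathrm{H}A\le 1/3<1$. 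The delicate part is that Phase B must be sparse enough to yield the efficient cluster cover yet dense enough to propagate Frostman to $v_{k+1}$; the slack $L_k^{\min}\ll L_k^{\max}$, which is guaranteed by taking $\alpha_k$ sufficiently large, is precisely what makes both requirements compatible, and the remaining verifications (integer roundings of $L_k$, $M_k$, and $\alpha_k$, and bookkeeping of constants) are routine.
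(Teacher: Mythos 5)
Your construction is correct and delivers all of the Lemma's conclusions, but it takes a genuinely different route from the one in the paper. The paper builds $A$ as a \emph{dyadic} inhomogeneous Cantor set: at every stage exactly two children are retained, and only the gap ratio varies, alternating between long ``$T$'' phases with middle-third deletion (producing local dimension $\ln 2/\ln 3$) and long ``$H$'' phases with gap ratio $1-2\lambda_k\to 0$ (producing local dimension $\to 1$). The natural measure $\sigma_0(E^{(k)}_w)=2^{-k}$ then gives the Frostman exponent $1-\varepsilon_k/4$ at the scales $\delta_k$ inside the $H$ phases, and the intervals $[u_k,v_k]$ are those scales. Your construction instead keeps a variable branching number: a trivial dyadic split (Phase A) to make room for the interval $[v_k/2,v_k]$, followed by a Moran step (Phase B) retaining a \emph{consecutive left block} of $L_k$ of the $u_k/v_{k+1}$ grandchildren, with $L_k\asymp u_k^{1-\varepsilon_k/4}/v_{k+1}^{1-\varepsilon_{k+1}/4}$ and $v_{k+1}=u_k^{8/\varepsilon_{k+1}}$. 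The clustering of the retained block is what gives you the efficient cover by $\asymp N_k$ sets of diameter $L_kv_{k+1}\asymp u_k^{3-\varepsilon_k/4}$ and hence $\dim_\mathrm{H}A\leq 1/3$. What each approach buys: the paper's fixed-branching structure is simpler to state, its measure is automatic, and the bound $\dim_\mathrm{H}A\leq\ln 2/\ln 3$ comes for free from the $T$ phases; your construction requires more bookkeeping (the parameters $\alpha_k,L_k,N_k$ and their integer roundings) but you obtain an explicit Frostman verification and a smaller Hausdorff dimension bound. A few small imprecisions worth noting: you write $u_k/v_k=c_0$ but the Lemma asks for strict inequality $u_k/v_k<c_0$ (trivial to fix by enlarging $c_0$); the admissibility reduction $v_{k+1}^{\varepsilon_{k+1}/4}u_k^{-\varepsilon_k/4}\le1$ does \emph{not} follow merely from $v_{k+1}<u_k$ and monotonicity of $\varepsilon_k$ — it actually uses $v_{k+1}^{\varepsilon_{k+1}/4}=u_k^{2}$ coming from the specific choice $\alpha_k\varepsilon_{k+1}/4=2$, plus $\varepsilon_k<8$; and the cover count is $2N_k$ rather than $N_k$ (immaterial). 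Also, you should note that the uniformity of the Frostman constant $c_1$ (independent of $k$) relies on $\rho_k:=N_kv_k^{1-\varepsilon_k/4}$ being bounded \emph{below}, which holds because taking $L_k=\lceil\cdot\rceil$ makes $\rho_k$ nondecreasing; the fact that $\rho_k$ may grow is harmless since it only tightens the upper Frostman bound, and the admissibility $N_kv_k\le1$ remains true because $v_k$ decays superexponentially relative to any such growth.
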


We postpone the proof of this lemma until the end of this section and proceed the argument. 
Let $f\in L^1(\mathbb{T})$ be a probability density function for which $m_f>0$, $K_f=A$ and $f$ is Lipschitz on $A$. 
Then by \cite{Fan-Karagulyan2021}*{Corollary 1.1-(2)} the conditions \eqref{cond:shepp^a} and \eqref{cond:kahane^m} will both be necessary and sufficient for $\mu_f$-Dvoretzky covering. 
However, we will show that $\mathrm{Cap}_{m_f}(K_f)>0$, for the sequence $\ell_n$ to be defined below. 
Hence there can be no $\mu_f$-Dvoretzky covering. 

Let $h(x)=\frac{1}{m_f} \frac{\ln x}{x}$. 
Choose an increasing sequence $\{n_k\}_{k\in \mathbb{N}\cup \{0\}}$ recursively as follows. 
Let $n_0=0$. 
Take $n_1$ to be a positive integer so that $h(n_1)\in [u_{j_1}/(1-\varepsilon _{j_1}),v_{j_1}/(1-\varepsilon _{j_1})]$ for some (large) $j_1\in \mathbb{N}$. 
Note that this is possible since $h(n)/h(n+1)\rightarrow 1$, as $n \rightarrow \infty$ and $u_n/v_n<c_0$, so for large values of $n$, $h(n)$ cannot miss the intervals $\{[u_k,v_k]\}_{k \geq 1}$.
Next, take $n_2>n_1$ so that $h(n_2)\in [u_{j_2}/(1-\varepsilon _{j_2}),v_{j_2}/(1-\varepsilon _{j_2})]$ for some $j_2\in \mathbb{N}$. 
Note that $j_2>j_1$. 
Once $n_k\in \mathbb{N}$ is chosen, take $n_{k+1}\in \mathbb{N}$ with $n_{k+1}>n_k$ so that $h(n_{k+1})\in [u_{j_{k+1}}/(1-\varepsilon _{j_{k+1}}),v_{j_{k+1}}/(1-\varepsilon _{j_{k+1}})]$ for some $j_{k+1}\in \mathbb{N}$. 
Note that $j_{k+1}>j_k$. 
During this procedure, one can choose $\{n_k\}$ to grow fast enough so that 
\[
n_1^{1-\varepsilon_{j_1}}\cdots n_k^{1-\varepsilon_{j_k}} \leq n_k^{1-(\varepsilon_{j_k}/2)}. 
\]

For $k\in \mathbb{N}$, define
\[
b_k=\left(\frac{1-\varepsilon_{j_{k}}}{m_f}\right) \frac{\ln n_{k}}{n_{k}}, 
\]
and for $n\in \mathbb{N}$ with $n_0+n_1+\cdots +n_k<n\leq n_0+n_1+\cdots +n_k+n_{k+1}$ for $k\in \mathbb{N}\cup \{0\}$, define 
\[
\ell _n=b_{k+1} =\left(\frac{1-\varepsilon_{j_{k+1}}}{m_f}\right) \frac{\ln n_{k+1}}{n_{k+1}}.
\]
Note that if $n_k$ grows fast enough, then
$$
\frac{\sum_{k=1}^{n_1+\cdots +n_q} \ell_k}{\ln (n_1 + \dots  + n_q)}= \frac{1}{m_f}\frac{\sum_{i=1}^{q} (1-\varepsilon_{j_i}) \ln n_{i}}{\ln (n_1 + \dots  + n_q)}\geq \frac{1}{m_f}(1-2\varepsilon_{j_q}).
$$
This implies \eqref{mD}. Note that for $(s,t)\in K_f\times K_f$ with $|t-s|\in (b_{q+1}, b_q]$, one has 
\begin{align*}
    \sum_{k=1}^\infty (\ell_k - |t-s|)_+=\sum_{k=1}^{n_1+\cdots +n_q} (\ell_k - |t-s|) 
    \leq \sum_{k=1}^{n_1+\cdots +n_q} \ell_k 
    &=\frac{1}{m_f} \sum_{i=1}^{q} (1-\varepsilon_{j_i})n_ib_i \\
    &=\frac{1}{m_f} \sum_{i=1}^{q} (1-\varepsilon_{j_i}) \ln n_{i} \\
    &\leq \frac{1}{m_f} (1-(\varepsilon_{j_q}/2)) \ln n_{q} . 
\end{align*}
Next, for $t\in K_f$ and $q\in \mathbb{N}$, let $A_t(q)=(t+b_{q+1},t+b_{q}]\cup [t-b_{q},t-b_{q+1})$. 
Note that $A_t(q)\subset B(t,q)$. 
It follows that for each $t\in K_f$ 
\begin{align*}
    \int_{K_f} \exp \left\{ m_f \sum_{k=1}^\infty (\ell_k - |t-s|)_+\right\} \, d\sigma_0(s)
    &=\sum_{q=1}^\infty\int_{K_f\cap A_t(q)} \exp \left\{ m_f \sum_{k=1}^\infty (\ell_k - |t-s|)_+\right\} \, d\sigma_0(s) \\
    &\leq \sum_{q=1}^\infty\int_{K_f\cap A_t(q)} \exp \left\{ (1-(\varepsilon_{j_q}/2)) \ln n_{q} \right\} \, d\sigma_0(s) \\
    &\leq \sum_{q=1}^\infty n_q^{1-(\varepsilon_{j_q}/2)}\sigma_0(A_t(q)) \\
    &\leq \sum_{q=1}^\infty n_q^{1-(\varepsilon_{j_q}/2)}\sigma_0(B(t,b_q)) \\
    &\leq \frac{c_1}{m_f} \sum_{q=1}^\infty n_q^{1-(\varepsilon_{j_q}/2)}\left( \frac{\ln n_q}{n_q} \right) ^{1-(\varepsilon_{j_q}/4)} \\
    &\leq \frac{c_1}{m_f} \sum_{q=1}^\infty \frac{\ln n_q}{(n_q\ln n_q)^{\varepsilon_{j_q}/4}} <\infty 
\end{align*}
by Lemma \ref{lem:inhomocantor}. 
Hence, 
\[
\int_{K_f} \int_{K_f} \exp \left\{ m_f\sum_{k=1}^\infty (\ell_k - |t-s|)_+\right\} \, d\sigma_0(s)d\sigma_0(t)<\infty ,
\]
and we thus obtain that $\mathrm{Cap}_{m_f}(K_f)>0$. 

\subsubsection{Proof of Lemma \ref{lem:inhomocantor}}

We will define $A$ as a Cantor set by a modification of the standard construction using nested sequence. 
(See \cite{Falconer1990} for instance.) 
For $k\in \mathbb{N}$, let $\lambda _k=\sum _{i=1}^k1/3^i$. 
Let $E^{(0)}=[0,1]$. 
Define $E^{(1)}_{0}$ and $E^{(1)}_{2}$ by deleting from the middle of $E^{(2)}$ an open interval $J_0$ of length $\frac{1}{3} |E^{(0)}|$, namely $J_0=(\frac{1}{3},\frac{2}{3})$,  $E^{(1)}_{0}=[0,\frac{1}{3}]$ and $E^{(1)}_{2}=[\frac{2}{3},1]$, respectively. 
Continue this procedure up to $k_1$-steps. 
Namely, if we have defined closed intervals $E^{(k)}_{w}$ for $w=w_1\cdots w_{k}\in \{0,2\}^{k}$, where $k\in \{ 1,\dots ,k_1-1\}$, then define $E^{(k+1)}_{w0},E^{(k+1)}_{w2}$ by deleting from the middle of $E^{(k)}_{w}$ an open interval $J_{w1}$ of length $\frac{1}{3} |E^{(k)}_{w}|$, that is $E^{(k)}_{w}=E^{(k+1)}_{w0}\sqcup J_{w1}\sqcup E^{(k+1)}_{w2}$. 

In the next step, define closed intervals $E^{(k_1+1)}_{w0},E^{(k_1+1)}_{w2}$ for each $w\in \{0,2\}^{k_1}$ by deleting from the middle of $E^{(k_1)}_{w}$ an open interval $J_{w1}$ of length $(1-2\lambda _{k_1+1}) |E^{(k_1)}_{w}|$. 
Continue this procedure for the next $k_2$-steps. 
That is, if we have defined closed intervals $E^{(k)}_{w}$ for $w=w_1\cdots w_{k}\in \{0,2\}^{k}$, where $\in \{ k_1+1,\dots ,k_2-1\}$, then define $E^{(k+1)}_{w0},E^{(k+1)}_{w2}$ by deleting from the middle of $E^{(k)}_{w}$ an open interval $J_{w1}$ of length $(1-2\lambda _{k+1}) |E^{(k)}_{w}|$, that is $E^{(k)}_{w}=E^{(k+1)}_{w0}\sqcup J_{w1}\sqcup E^{(k+1)}_{w2}$. 

We will repeat the whole procedure above. 
Namely, for $k\in \{ k_2+1,\dots ,k_3\}$ we define $E^{k}_w$ by deleting an open interval of length $\frac{1}{3}$ portion of previous, and for $k\in \{ k_3+1,\dots ,k_4\}$ we define $E^{k}_w$ by deleting an open interval of length $(1-2\lambda _k)$ portion of previous, and so on. 
Define then 
\[
A=\cap _{k=1}^\infty \cup _{w\in \{0,2\}^{k}}E^{(k)}_w.
\]

For each integer $k\in \mathbb{N}$, denote the length of the intervals $E^{(k)}_{w}$, $w\in \{ 0,2\}^{k}$ by $\delta_k$. 
(Set $\delta_0=1$.)
Let $T=\sqcup _{q=0}^\infty \{ k_{2q}+1,\dots ,k_{2q+1}\} =\sqcup _{q=0}^\infty T_q$ and $H=\sqcup _{q=0}^\infty \{ k_{2q+1}+1,\dots ,k_{2(q+1)}\} =\sqcup _{q=0}^\infty H_q$ with $k_0=0$. 
Then for $k\in T\sqcup H=\mathbb{N}$, there is a unique $q=q(k)\in \mathbb{N}\cup \{0\}$ such that $k\in T_q$ or $k\in H_q$. 
One can see that
\begin{equation} \label{diamE^k:inhomog}
    \delta_k=
    \begin{cases}
    \left( \dfrac{1}{3} \right) ^{k-k_{2q}}\delta _{k_{2q}}, & k\in T_q \\
    \lambda _{k}\cdots \lambda _{k_{2q+1}+1} \delta _{k_{2q+1}}, & k\in H_q, \end{cases}
\end{equation}
where 
\[
\lambda _{k}\cdots \lambda _{k_{2q+1}+1}=\prod _{j=k_{2q+1}+1}^{k}\frac{1}{2} \left( 1-\frac{1}{3^j}\right) \approx \left( \frac{1}{2} \right)^{k-k_{2q+1}} \exp \left\{ -\frac{1}{2} \left( \frac{1}{3^{k_{2q+1}}}-\frac{1}{3^k}\right) \right\}
\]
up to a uniform constant, and thus $\delta _k/\delta _{k-1}\in [1/3,1/2)$. 
Note that $M(A,\delta_k)\leq 2^k$ in view of the obvious covering by $\{E^{(k)}_{w}\} _{w\in \{0,2\}^{k}}$. (Here and below, recall the definition of box-counting dimension from Section \ref{preliminary}) 
Note also that given an interval of length $\delta _k$ there exist at most two intervals $E^{(k-1)}_{w}$ which intersect it, and hence $M(A,\delta_k)\geq 2^{k-1}$. 
Thus for a sequence $k_q$ growing fast enough one can show that
\begin{align*}
    \underline{\dim} _\mathrm{B}A&=\liminf_{k\to \infty}\frac{\ln M(A,\delta_k)}{-\ln \delta _k}=\frac{\ln 2}{\ln 3} <1, \\
    \overline{\dim} _\mathrm{B}A&=\limsup_{k\to \infty}\frac{\ln M(A,\delta_k)}{-\ln \delta _k}=\frac{\ln 2}{\ln 2}=1. 
\end{align*}
But then $\dim _\mathrm{H}A\leq \underline{\dim} _\mathrm{B}A<\overline{\dim} _\mathrm{B}A=1$ .

Next, we define a distribution function $F$ on $[0,1]$ to obtain a measure supported on the Cantor set $A$ defined above, and it is accomplished in a similar way as defining the singular Cantor function. 
For completeness, it is outlined below. 
For $w\in \{0,2\}$, let
\[
\bar{w}=
\begin{cases}
0, & w=0, \\
1, & w=2.
\end{cases}
\]
Define for $x\in [0,1]\setminus A$ by
\[
F(x)=
\begin{cases}
\dfrac{1}{2}, & x\in J_1, \\
\sum_{i=1}^{k-1}\dfrac{\bar{w_i}}{2^i} +\dfrac{1}{2^k}, & x\in J_{w_1\cdots w_{k-1}1}.
\end{cases}
\]
It can be extended to a continuous non-decreasing function on $[0,1]$, and denote the resulting function by $F$ as well. 
Let $\sigma_0$ be the associated probability measure supported on $A$, i.e.
\[
\sigma_0((a,b))=F(b)-F(a).
\]
Then one can see that 
\[
\sigma_0(E^{(k)}_w)=\left( \frac{1}{2}\right)^k
\]
for every $w\in \{0,2\}^{k}$ and $k\in \mathbb{N}$. 
Taking $\# T_q<\# H_q$ and $\# H_q<\# H_{q+1}$ for all large $q\in \mathbb{N}$, it then follows from \eqref{diamE^k:inhomog} that 
\[
\sigma_0(E^{(k)}_w)\lesssim \delta _k^{1-(\varepsilon_k/4)}
\]
for $k\in H$ up to a uniform constant. 
Hence, there is a family of disjoint intervals $\{ [u_k,v_k]\}_{k\in \mathbb{N}}$ with $u_k>0$,  $v_{k+1}<u_k$, and $u_k/v_k<c_0$ for some constant $c_0>0$ such that for every $r\in [u_k,v_k]$ and $t\in A$ one has
\[
\sigma_0(B(t,r))\leq c_1r^{1-(\varepsilon_k/4)} 
\]
for some constant $c_1>0$. 
Lemma \ref{lem:inhomocantor} is obtained.

\section{Examples and corollaries} \label{examples}

We have the following corollary from Proposition \ref{mainProp:DtoS}.

\begin{proposition}\label{Cor:uD_nuD}
If for a sequence $\{\ell_n\}_{n\in \mathbb{N}}$ there is no Dvoretzky covering in the classical sense, i.e. when $f \equiv 1$, then there can be no Dvoretzky covering with respect to any absolute continuous measure. 
In particular, for the sequence $\ell_n=\left(\frac{1}{n}-\frac{\beta}{n \ln n}\right)$, with $\beta>1$, condition \eqref{mD} is fulfilled, but Theorem \ref{mainThm:perturbation} fails.
\end{proposition}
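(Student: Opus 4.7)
The plan is to derive both halves of the proposition from Proposition~\ref{mainProp:DtoS} combined with the classical Shepp criterion. For the first assertion, I would begin with the observation that any probability density $f$ on $\mathbb{T}$ satisfies $m_f\le 1$ (since $1=\int_{\mathbb{T}}f\,dx\ge m_f$), with equality if and only if $f\equiv 1$ a.e. If $m_f=1$ the statement is tautological, so assume $m_f<1$ and choose any $a\in(m_f,1)$. Then $e^{a(\ell_1+\cdots+\ell_n)}\le e^{\ell_1+\cdots+\ell_n}$ termwise, so if the classical Shepp series $\sum_n n^{-2}\exp(\ell_1+\cdots+\ell_n)$ converges (which is precisely what ``no classical covering'' means by Shepp's theorem), then so does $\sum_n n^{-2}\exp\{a(\ell_1+\cdots+\ell_n)\}$. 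The contrapositive of Proposition~\ref{mainProp:DtoS} immediately rules out any $\mu_f$-Dvoretzky covering.

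For the concrete sequence $\ell_n=\tfrac{1}{n}-\tfrac{\beta}{n\ln n}$ I would use the standard asymptotics $\sum_{k=1}^n\tfrac{1}{k}=\ln n+\gamma+O(1/n)$ together with $\sum_{k=2}^n\tfrac{1}{k\ln k}=\ln\ln n+O(1)$ (from the integral test) to obtain
\[
S_n:=\ell_1+\cdots+\ell_n=\ln n-\beta\ln\ln n+O(1).
\]
Hence $S_n/\ln n\to 1$, so condition \eqref{mD} forces $m_f\ge 1$, i.e.\ $f\equiv 1$; conversely for $f\equiv 1$ condition \eqref{mD} trivially holds with equality. On the other hand,
\[
\frac{1}{n^2}e^{S_n}\asymp \frac{1}{n(\ln n)^\beta},
\]
which is summable because $\beta>1$, so Shepp's series converges and there is no classical Dvoretzky covering.

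Putting the two steps together, for this sequence $\{\ell_n\}$ there is no $\mu_f$-Dvoretzky covering for any absolutely continuous measure whatsoever. In particular, starting from $f\equiv 1$ (the only density compatible with \eqref{mD} here) no perturbation $f_0$, however small, can produce a $\mu_{f_0}$-covering—this is what is meant by ``Theorem~\ref{mainThm:perturbation} fails''. The phenomenon is caused precisely by the fact that $K_{f}=\mathbb{T}$ for $f\equiv 1$, so that $|K_f|=1$ and the hypothesis $|K_f|=0$ of Theorem~\ref{mainThm:perturbation} cannot be met; the example therefore confirms that this hypothesis cannot be dropped. I do not anticipate any real obstacle; the only mildly delicate step is the logarithmic asymptotic expansion of $S_n$, which is routine.
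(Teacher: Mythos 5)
Your argument is correct and follows essentially the same route as the paper's own proof: the contrapositive of Proposition~\ref{mainProp:DtoS} combined with the observation $m_f\le 1$ for the first claim, and the asymptotic $\ell_1+\cdots+\ell_n=\ln n-\beta\ln\ln n+O(1)$ together with Shepp's criterion for the second. Your added remark that for this sequence condition~\eqref{mD} forces $f\equiv 1$, hence $|K_f|=1$ so the hypothesis $|K_f|=0$ of Theorem~\ref{mainThm:perturbation} is unattainable, makes explicit what the paper's phrase ``Theorem~\ref{mainThm:perturbation} fails'' is intended to convey.
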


\begin{corollary}
In fact one can show that Theorem \ref{mainThm:perturbation} is not true if $K_f$ contains an interval.
\end{corollary}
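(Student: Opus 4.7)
The plan is to prove the corollary by exhibiting a concrete example for which the conclusion of Theorem \ref{mainThm:perturbation} fails. The simplest choice is the uniform density $f \equiv 1$ on $\mathbb{T}$, so that $m_f = 1$ and $K_f = \mathbb{T}$, which trivially contains every sub-interval. Pair this with the boundary sequence
\[
\ell_n = \frac{1}{n} - \frac{\beta}{n \ln n}, \qquad \beta > 1,
\]
already considered in Proposition \ref{Cor:uD_nuD}. A direct estimate gives $\ell_1 + \cdots + \ell_n = \ln n - \beta \ln \ln n + O(1)$, so $\limsup_n (\ell_1 + \cdots + \ell_n)/\ln n = 1 = 1/m_f$, and condition \eqref{mD} holds.

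The essential observation is that the class of admissible perturbations collapses in this case. Any admissible $f_0$ has $m_{f_0} = m_f = 1$, which forces $f_0 \geq 1$ almost everywhere; combined with $\int f_0 = 1$ this yields $f_0 \equiv 1 = f$. Thus the only candidate for a ``perturbation'' is $f$ itself, $\mu_{f_0}$ is Lebesgue measure, and the question reduces to classical Dvoretzky covering. Shepp's sum is
\[
\sum_n \frac{1}{n^2} \exp(\ell_1 + \cdots + \ell_n) \asymp \sum_n \frac{1}{n (\ln n)^\beta},
\]
which converges for $\beta > 1$. Hence classical covering fails and the conclusion of Theorem \ref{mainThm:perturbation} is unattainable for this $f$ and this $\{\ell_n\}$.

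A more substantive version, where one insists $m_f < 1$ and $K_f \subsetneq \mathbb{T}$ properly contains an interval $I$, would be pursued by taking $f \equiv m_f$ on $I$ and strictly larger on $\mathbb{T} \setminus I$, together with the rescaled sequence $\ell_n = (1/m_f)(1/n - \beta/(n\ln n))$, $\beta > 1$. One verifies by direct computation that $\mathrm{Cap}_{m_f}(I) > 0$: with $\sigma$ being the normalized Lebesgue measure on $I$,
\[
\int_I \int_I \exp\!\left( m_f \sum_{k=1}^\infty (\ell_k - |t-s|)_+ \right) dt\, ds \asymp \int_0^{|I|} \frac{dr}{r\, |\ln r|^\beta} < \infty
\]
whenever $\beta > 1$. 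One would then apply the local Billard criterion (Proposition \ref{prop:loc_B}) on a compact $F \subset I \setminus E$ (with $E = \{f \neq f_0\}$) of positive Lebesgue measure, for every admissible $f_0$, to conclude that $\mathbb{T}$ is not $\mu_{f_0}$-Dvoretzky covered.

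The main obstacle in this extension is uniform control of the perturbation error. On $I \setminus E$ one has $f_0 = m_f$, so $\mu_{f_0}(B(t, r_n) \cap B(s, r_n))$ equals $m_f(\ell_n - |t-s|)_+$ up to a remainder $\int_{B(t, r_n) \cap B(s, r_n) \cap E} (f_0 - m_f)$. Lebesgue differentiation of the $L^1$ function $(f_0 - m_f) \mathbbm{1}_E$ yields pointwise $o(r_n)$ decay for a.e.\ $t \in I \setminus E$, made uniform on a positive-measure $F$ via Egorov. However, the aggregate error $\sum_n [\mu_{f_0}(B(t,r_n) \cap B(s, r_n)) - m_f(\ell_n - |t-s|)_+]$ must be bounded by a constant uniform in $t, s \in F$: any multiplicative correction of the form $(m_f + \delta)\sum(\ell_n - |t-s|)_+$ with $\delta > 0$ already forces divergence of the Billard integral at the $\beta > 1$ boundary, destroying the delicate logarithmic convergence. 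Securing this sharp error bound appears to require a refined choice of $F$, for instance via Hardy--Littlewood maximal function estimates applied to $(f_0 - m_f)\mathbbm{1}_E$, combined with the smallness of $|E| \leq \varepsilon$.
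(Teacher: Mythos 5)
The paper states this corollary without giving a proof, so there is no argument of its own to compare against.

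Your first paragraph is a correct and complete proof of the corollary read as an \emph{existence} statement: $f\equiv 1$ has $K_f=\mathbb{T}$, which certainly contains an interval; for it the constraints $m_{f_0}=m_f=1$ and $\int_{\mathbb{T}}f_0=1$ collapse the admissible perturbations to $f_0\equiv 1$, and for $\ell_n=1/n-\beta/(n\ln n)$ with $\beta>1$ the Shepp series converges, so no perturbation works. This is essentially the second half of Proposition \ref{Cor:uD_nuD} restated, so while it verifies the claim literally, it adds no content beyond what is already in the paper.

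Your second paragraph correctly identifies why the substantively interesting version of the claim --- that the conclusion of Theorem \ref{mainThm:perturbation} fails for \emph{every} $f$ whose $K_f$ contains an interval $I\subsetneq\mathbb{T}$, with $m_f<1$ --- does not follow from the same trivial reduction, and your diagnosis of the obstruction is right. On $F\subset I\setminus E$ one has $\mu_{f_0}(B(t,r_n)\cap B(s,r_n))=m_f(\ell_n-|t-s|)_+ + \int_{J_n}h$ with $h=(f_0-m_f)\mathbbm{1}_E\geq 0$, where $J_n$ is the interval of length $(\ell_n-|t-s|)_+$ centered at the \emph{midpoint} $(t+s)/2$; summing in $n$ gives, for the sequence under consideration, a remainder $R(t,s)\approx \frac{1}{m_f}\int\frac{h(x)\,dx}{|t-s|+2|x-\frac{t+s}{2}|}$. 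This kernel is peaked at a point that need not lie in $F$, so Lebesgue differentiation plus Egorov applied to $h$ on $F$ does \emph{not} control $R$, and a Hardy--Littlewood estimate at the midpoint only yields $R(t,s)\lesssim \zeta\ln(1/|t-s|)$ on a large set --- and you correctly note that any extra $\delta\ln(1/r)$ in the exponent destroys the convergence of the Billard integral at the $\beta>1$ threshold, since $\int r^{-1-\delta}(\ln 1/r)^{-\beta'}\,dr=\infty$ for any $\delta>0$. This is exactly the tangential-direction divergence phenomenon the paper discusses in Section \ref{discussion}, and a proof of the general case would presumably need an argument in the spirit of Lemma \ref{Lmm-main} rather than a straightforward maximal-function bound. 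In short: the first part is a valid proof of the corollary as written, the second part is an honest sketch that flags a genuine and non-trivial gap which neither you nor the paper closes.
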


As we saw in Theorem \ref{mainThm:h_d_fail1}, the condition $\dim_\mathrm{B}$ can not be replaced with $\dim_\mathrm{H}$. However, we now show that the theorem is true under more restrictive assumptions on $\{\ell_n\}_{n\in \mathbb{N}}$:

\begin{theorem} \label{mainThm:h_d_fail2} 
Assume $m_f>0$, $\dim_\mathrm{H} K_f<1$, and  \begin{equation} \label{cond:seq}
    \dim _\mathrm{H}K_f<\liminf _{n\to \infty}m_f n\ell_n\leq \limsup _{n\to \infty}m_f n\ell_n<\infty.
\end{equation}
Then the circle is $\mu_f$-Dvoretzky covered if and only if \eqref{mD} holds.
However, for the sequence $\ell_n =\frac{1}{m_f}\Big( \frac{1}{n}-\frac{1+\beta }{n \ln n}\Big)$ $(\beta >1)$ there exists a density function $f$, so that $m_f>0$, $\dim_\mathrm{H} K_f=1$, and \eqref{mD} is fulfilled but $\mathbb{T}$ is not $\mu_f$-Dvoretzky covered.
\end{theorem}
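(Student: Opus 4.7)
The first assertion is an equivalence; necessity of \eqref{mD} is already contained in Theorem~\ref{mainThm:ub_dvoretzky-hawkes}-(a). For sufficiency I would reduce to Proposition~\ref{cond:fk}: by Lemma~\ref{lem:shepp^a-hawkes}, condition \eqref{cond:shepp^a} is equivalent to \eqref{mD}, so the real task is to verify the Kahane-type capacity condition $\mathrm{Cap}_{m_f}(K_f)=0$ under the quantitative hypothesis \eqref{cond:seq}.

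The heart of the argument is a Frostman-energy lower bound on the kernel $\Phi^{(m_f)}$. Fix $\alpha' \in (\dim_{\mathrm{H}} K_f,\ \liminf_n m_f n\ell_n)$ and set $b=\limsup_n m_f n\ell_n<\infty$; then for all sufficiently large $n$ one has $\alpha'/(m_f n) < \ell_n < b/(m_f n)$. Writing $N(r)=\#\{n:\ell_n>r\}$, the upper bound on $\ell_n$ gives $N(r) \leq b/(m_f r)+O(1)$, hence $m_f r N(r)=O(1)$, while the lower bound yields $N(r)\geq \alpha'/(m_f r)-O(1)$ and $m_f\sum_{n\leq N(r)}\ell_n \geq \alpha'\ln(1/r) - O(1)$. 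Combining these,
\[
m_f \sum_{n=1}^\infty (\ell_n-r)_+ \;=\; m_f\sum_{n\leq N(r)}\ell_n - m_f r N(r) \;\geq\; \alpha'\ln(1/r) - O(1)
\]
for small $r$, so $\exp\{m_f\sum_n(\ell_n-|t-s|)_+\} \gtrsim |t-s|^{-\alpha'}$ on the part of $K_f\times K_f$ where $|t-s|$ is small. Integrating against any Borel probability measure $\sigma$ on $K_f$ then controls the $\alpha'$-energy $I_{\alpha'}(\sigma)$ from below. Since $\alpha' > \dim_{\mathrm{H}} K_f$, Frostman's energy characterisation of Hausdorff dimension forces $I_{\alpha'}(\sigma)=\infty$ for every such $\sigma$, yielding $\mathrm{Cap}_{m_f}(K_f)=0$, and Proposition~\ref{cond:fk} gives the desired $\mu_f$-Dvoretzky covering.

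For the second assertion I would take the uniform density $f\equiv 1$. Then $m_f=1$, $K_f=\mathbb{T}$ so $\dim_{\mathrm{H}} K_f=1$, and $\mu_f$-Dvoretzky covering coincides with the classical one. A direct asymptotic expansion gives $\ell_1+\cdots+\ell_n = \ln n - (1+\beta)\ln\ln n + O(1)$, so $(\ell_1+\cdots+\ell_n)/\ln n \to 1$ and \eqref{mD} holds. Shepp's series satisfies $\sum n^{-2}\exp(\ell_1+\cdots+\ell_n) \asymp \sum 1/(n(\ln n)^{1+\beta}) < \infty$ for $\beta>0$, so classical covering fails and $\mathbb{T}$ is not $\mu_f$-Dvoretzky covered.

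The main obstacle will be the energy estimate in the second paragraph, and specifically its reliance on \emph{both} sides of \eqref{cond:seq}. The lower bound on $m_f n\ell_n$ is what produces the logarithmic growth of $\sum_{n\leq N(r)}\ell_n$; the finite $\limsup$ is what keeps $rN(r)$ bounded, so that the surviving main term is genuinely of the shape $\alpha'\ln(1/r)$. Dropping either inequality breaks the reduction to a pure power of $|t-s|$, consistently with the counterexample in part two and with the more intricate obstructions of Theorem~\ref{mainThm:h_d_fail1}.
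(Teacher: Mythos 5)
Your treatment of the first assertion is essentially the paper's argument. You reduce covering to Proposition~\ref{cond:fk} via Lemma~\ref{lem:shepp^a-hawkes}, and then verify $\mathrm{Cap}_{m_f}(K_f)=0$ by showing that the Kahane kernel dominates a power $|t-s|^{-\alpha'}$ for some $\alpha'>\dim_{\mathrm H}K_f$ and invoking the Riesz-energy characterisation of Hausdorff dimension. The paper executes the same computation in ``per-scale'' form: on each annulus $|t-s|\in(\ell_{n+1},\ell_n]$ it uses $m_f n\ell_n\le c_0$ to absorb the subtracted term $m_f n|t-s|$ and $m_f n\ell_n\ge \nu_*-\varepsilon$ to get $m_f\sum_{k\le n}\ell_k\gtrsim(\nu_*-\varepsilon)\ln n$, then converts $n$ into $1/|t-s|$ via $n|t-s|\geq c_1$. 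Your packaging through $N(r)$ is a cosmetic variant; the two ingredients (the two-sided bound on $m_f n\ell_n$ controlling $\sum\ell_n$ from below and $rN(r)$ from above) are exactly those in the paper, and the conclusion is the same.

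For the second assertion you and the paper diverge. You take $f\equiv 1$, so $m_f=1$ and $K_f=\mathbb{T}$, and observe that Shepp's series for $\ell_n=\frac1n-\frac{1+\beta}{n\ln n}$ converges for $\beta>1$, so there is no classical covering. This is a correct proof of the theorem as literally stated, but it is a degenerate instance: with $K_f=\mathbb{T}$ the failure is just Shepp's theorem, and by Proposition~\ref{Cor:uD_nuD} this sequence admits no covering for \emph{any} absolutely continuous density, so nothing specific to $K_f$ being ``dimensionally large'' is being exhibited. The paper instead builds a Cantor set $A$ with $\dim_{\mathrm H}A=1$ but $|A|=0$ (Lemma~\ref{sing-Frostman}), together with a Frostman measure $\sigma_0$ satisfying $\sigma_0(B)\lesssim|\ln\diam B|\cdot\diam B$, and takes a density $f$ Lipschitz on $A$ with $K_f=A$. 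It then computes $I^{(m_f)}(\sigma_0)<\infty$, giving $\mathrm{Cap}_{m_f}(K_f)>0$, and concludes non-covering via the necessity direction of the Fan--Karagulyan criterion. That construction is the substantive content: because $|K_f|=0$, Theorem~\ref{mainThm:perturbation} says a small perturbation of $f$ \emph{would} yield covering, yet the specific $f$ does not, so the obstruction is genuinely about $\dim_{\mathrm H}K_f=1$ and not about Shepp failing globally. If you keep $f\equiv 1$, you should at least note that the example is degenerate and that a counterexample with $|K_f|=0$ requires the Cantor-set construction.
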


We will next prove the following corollary from Theorems \ref{mainThm:perturbation} and \ref{mainThm:h_d_fail1}:

\begin{corollary}\label{mainThm:necS-unnecK}
Condition \eqref{cond:shepp^a} is necessary for  $\mu_f$-Dvoretzky covering, even when $m_f=0$. However, \eqref{cond:kahane^m} is not always necessary for covering, i.e. there exists a sequence $\{\ell_n\}_{n \in \mathbb{N}}$ and a density function $f$ so that $\mathbb{T}$ is $\mu_f$ Dvoretzky covered, however \eqref{cond:kahane^m} is not full-filed.
\end{corollary}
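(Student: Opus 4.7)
The plan is to obtain both claims as immediate consequences of results already established in the paper; no new technical machinery is required.

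For the first claim, I will invoke Proposition \ref{mainProp:DtoS} directly. Its statement is not restricted to $m_f>0$, and the proof goes through verbatim when $m_f=0$: the auxiliary density $g$ defined in \eqref{times} has $m_g=m_f+\varepsilon>0$ irrespective of whether $m_f$ vanishes, Lemma \ref{Lmm-main} and the local Billard criterion (Proposition \ref{prop:loc_B}) are applied to $g$, and the concluding chain of inequalities leads to $\sum_{n\geq 1} n^{-2}\exp(a(\ell_1+\cdots+\ell_n))=\infty$ for every $a>m_f$. When $m_f=0$ this is exactly condition \eqref{cond:shepp^a}, so necessity of \eqref{cond:shepp^a} holds in this case as well.

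For the second claim, the idea is a two-step construction: first invoke Theorem \ref{mainThm:h_d_fail1} to obtain a density $f$ with $m_f>0$, $\dim_{\mathrm H}K_f<\overline{\dim}_{\mathrm B}K_f=1$, and a sequence $\{\ell_n\}$ satisfying \eqref{mD}; the proof of that theorem in fact establishes $\mathrm{Cap}_{m_f}(K_f)>0$, which is precisely what prevents $\mu_f$-Dvoretzky covering there. Since $\dim_{\mathrm H}K_f<1$ forces $|K_f|=0$, the hypotheses of Theorem \ref{mainThm:perturbation} are satisfied, and applying it produces a density $f_0$ with $m_{f_0}=m_f$, $K_{f_0}=K_f$, and such that $\mathbb{T}$ is $\mu_{f_0}$-Dvoretzky covered. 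Because both $m_{f_0}=m_f$ and $K_{f_0}=K_f$, the capacity is unchanged, $\mathrm{Cap}_{m_{f_0}}(K_{f_0})=\mathrm{Cap}_{m_f}(K_f)>0$, and \eqref{cond:kahane^m} fails for $(f_0,\{\ell_n\})$ even though covering does hold, providing the required counterexample.

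There is no substantive technical obstacle. The only two points requiring brief verification are that the argument for Proposition \ref{mainProp:DtoS} truly does not use $m_f>0$ anywhere (a quick inspection of each step, in particular the estimate \eqref{ineq:F_0ball} and Lemma \ref{Lmm-main}, confirms this), and that the construction in Theorem \ref{mainThm:h_d_fail1} simultaneously yields $|K_f|=0$ and $\mathrm{Cap}_{m_f}(K_f)>0$, so that Theorem \ref{mainThm:perturbation} applies while the capacity obstruction is preserved under the perturbation. Both are immediate from the statements and proofs as they stand.
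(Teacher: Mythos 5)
Your proof is correct and follows essentially the same approach as the paper: for the first claim you cite Proposition \ref{mainProp:DtoS} (observing it does not require $m_f>0$), and for the second claim you combine the density and sequence from Theorem \ref{mainThm:h_d_fail1} (which has $\mathrm{Cap}_{m_f}(K_f)>0$ and $|K_f|=0$) with the perturbation result of Theorem \ref{mainThm:perturbation}, noting that the perturbed density preserves $m_f$ and $K_f$ and hence the positive capacity, exactly as the paper does.
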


\subsection{Proof of Proposition \ref{Cor:uD_nuD}}

Assume the opposite. 
There exists a density function $f$ for which there is $\mu_f$-Dvoretzky covering, but no Dvoretzky covering in the classical sense, i.e.
\[
\sum_{n=1}^\infty\frac{1}{n^2}e^{(\ell_1 + \dots + \ell_n)}<\infty.
\]
Since $\int_\mathbb{T}f\, dt=1$ and $f\not\equiv 1$, one has $m_f<1$. 
Then for every $a$, with $m_f<a\leq 1$ we have
\[
\sum_{n=1}^\infty\frac{1}{n^2}e^{a(\ell_1 + \dots + \ell_n)}<\sum_{n=1}^\infty\frac{1}{n^2}e^{(\ell_1 + \dots + \ell_n)}<\infty.
\]
This contradicts Proposition \ref{mainProp:DtoS}. 

Next we prove the second half. 
Note that the sequence $\ell_n=\left( \frac{1}{n}-\frac{{\beta}}{n \ln n}\right)$ satisfies condition \eqref{mD} since 
\[
\sum_{k=1}^n \left( \frac{1}{k}-\frac{{\beta}}{k \ln k}\right)\approx \ln n -\beta \ln \ln n
\] 
up to a uniform constant. 
We also have that
\[
\sum_{n=1}^\infty \frac{1}{n^2}e^{\sum_{k=1}^n \left( \frac{1}{k}-\frac{{\beta}}{k \ln k}\right)}\approx \sum_{n=1}^\infty \frac{1}{n^2}e^{\ln n -\beta \ln \ln n}=\sum_{n=1}^\infty \frac{1}{n \ln^\beta n}<\infty,
\]
whenever $\beta>1$. 
Thus, the sequence $\{\left(\frac{1}{n}-\frac{{\beta}}{n \ln n}\right)\}_{n \in \mathbb{N}}$ satisfies condition \eqref{mD}, and for it there is no Dvoretzky covering in the classical sense by the Shepp criterion. 
Hence, the uniform density cannot be modified to obtain non-uniform Dvoretzky covering similar to Theorem \ref{mainThm:perturbation}.

\subsection{Proof of Theorem \ref{mainThm:h_d_fail2}: Part I}

In this section, we will prove the first half of Theorem \ref{mainThm:h_d_fail2}. 
In Theorem \ref{mainThm:h_d_fail1} we saw that if the sequence $m_f(\ell_1 + \dots +\ell_n)/\ln n$ is strongly oscillatory and $\{\ell_n\}$ tends to have many clusters, then non-covering phenomena can occur when $\dim_\mathrm{H} K_f<1$. 
In contrast, in this section we show that if we restrict these oscillatory behaviour then $\mu_f$-Dvortezky covering can be achieved.

First, note that the necessity of \eqref{mD} follows from Theorem \ref{mainThm:ub_dvoretzky-hawkes}-(a) independently of the dimension of $K_f$ and \eqref{cond:seq}. 
It is thus enough to show that $\mathbb{T}$ is $\mu_f$-Dvoretzky covered by sequences satisfying   \eqref{mD} and \eqref{cond:seq}. 

To see $\mathbb{T}$ is $\mu_f$-Dvoretzky covered, we will use Proposition \ref{cond:fk}. 
We obtain \eqref{cond:shepp^a} by Lemma \ref{lem:shepp^a-hawkes}. 
Henceforth, we will show \eqref{cond:kahane^m}, that is,  $\mathrm{Cap}_{m_f}(K_f)=0$. 

For $(s,t)\in K_f\times K_f$ with $|t-s|\in (\ell_{n+1}, \ell_n]$ one has 
\[
m_f\sum_{k=1}^\infty (\ell_k - |t-s|)_+=m_f\sum_{k=1}^{n} (\ell_k - |t-s|). 
\]
For $t\in K_f$ and $n\in \mathbb{N}$, let $A_t(n)=(t+\ell_{n+1},t+\ell_{n}]\cup [t-\ell_{n},t-\ell_{n+1})$. 
Let $\nu_*=\liminf _{n\to \infty}m_f n\ell_n$ and $\nu^*=\limsup _{n\to \infty}m_f n\ell_n$. 
By assumption, for some constant $c_0\in (0,\nu^*)$ and $N_1\in \mathbb{N}$, we have $m_f n\ell_n\leq c_0$ for every $n\geq N_1$. 
Take $\varepsilon >0$ so small that $\nu_*-\varepsilon >\dim _\mathrm{H}K_f$. 
There exists $N_2=N_2(\varepsilon)\in \mathbb{N}$ such that for every $n\geq N_2$ one has $m_f n\ell_n >\nu_*-\varepsilon$. 
Let $N_0=\max \{ N_1,N_2\}$. 
Then we have 
\begin{align}
    I^{(m_f)}(\sigma)&=\int_{K_f} \int_{K_f} \exp \left\{ m_f\sum_{k=1}^\infty (\ell_k - |t-s|)_+\right\} \, d\sigma(s)d\sigma(t) \nonumber \\
    &=\sum_{n=1}^\infty \int_{K_f} \left[ \int_{K_f\cap A_t(n)} \exp \left\{ m_f\sum_{k=1}^{n} (\ell_k - |t-s|)\right\} \, d\sigma(s)\right] d\sigma (t) \nonumber \\
    &\geq \sum_{n=1}^{N_0-1} \int_{K_f} \left[ \int_{K_f\cap A_t(n)} \exp \left\{ m_f\sum_{k=1}^{n} (\ell_k - |t-s|)\right\} \, d\sigma(s)\right] d\sigma (t) \nonumber \\
    &\quad +\sum_{n=N_0}^{\infty } \int_{K_f} \left[ \int_{K_f\cap A_t(n)} \exp \left\{ \left( m_f\sum_{k=1}^{n} \ell_k\right) - c_0\right\} \, d\sigma(s)\right] d\sigma (t), \label{principal_kernel}
\end{align}
where the first term in the right-hand side, denoted by $R_0$, will be bounded from above for 
\[
0\leq R_0\leq \sum_{n=1}^{N_0-1} \int_{K_f} \left[ \int_{K_f\cap A_t(n)} e^{m_f n\ell _1} \, d\sigma(s)\right] d\sigma (t) \leq N_0|K_f|^2e^{m_fN_0}<\infty .
\]
For the second term \eqref{principal_kernel}, it follows that 
\begin{align*}
    m_f\sum_{k=1}^{n} \ell_k
    =m_f\sum_{k=1}^{N_2-1} \ell_k+m_f\sum_{k=N_2}^{n} \ell_k
    &\geq m_f\sum_{k=1}^{N_2-1} \ell_k+\sum_{k=N_2}^{n} \frac{\nu_*-\varepsilon}{k} \\
    &=m_f\sum_{k=1}^{N_2-1} \ell_k - \sum_{k=1}^{N_2-1} \frac{\nu_*-\varepsilon}{k} + \sum_{k=1}^{N_2-1} \frac{\nu_*-\varepsilon}{k} + \sum_{k=N_2}^{n} \frac{\nu_*-\varepsilon}{k} \\
    &=\sum_{k=1}^{N_2-1} \left( m_f\ell_k -  \frac{\nu_*-\varepsilon}{k} \right) + \sum_{k=1}^{n} \frac{\nu_*-\varepsilon}{k} ,
\end{align*}
and hence 
\begin{equation} \label{ineq:kernel1}
    \exp \left\{ \left( m_f\sum_{k=1}^{n} \ell_k\right) - c_0\right\} \geq C_1 \exp \left\{ \sum_{k=1}^{n} \frac{\nu_*-\varepsilon}{k} \right\} \geq C_1n^{\nu_*-\varepsilon},
\end{equation}
where $C_1=C_1(\varepsilon)>0$ is a bounded constant taken as 
\[
C_1=e^{-c_0} \exp\left\{ \sum_{k=1}^{N_2-1} \left( m_f\ell_k -  \frac{\nu_*-\varepsilon}{k} \right)\right\} \leq \exp\left\{ \sum_{k=1}^{N_2-1} m_f\ell_k\right\} \leq e^{m_fN_1}<\infty . 
\]
Note that for every $s\in A_t(n)$ and $n\geq N_0$, one has 
\begin{equation} \label{ineq:kernel2}
    n|t-s|\geq c_1>0
\end{equation}
for some constant $c_1=c_1(\varepsilon)>0$ as 
\[ 
    n|t-s|\geq n\ell_{n+1}=\frac{n}{n+1} (n+1)\ell_{n+1}\geq \frac{n}{n+1}  \frac{\nu_*-\varepsilon}{m_f} >0. 
\]

It follows from \eqref{principal_kernel}, \eqref{ineq:kernel1}, and \eqref{ineq:kernel2} that 
\begin{align*}
    I^{(m_f)}(\sigma)
    &\geq R_0+C_2\sum_{n=N_0}^{\infty } \int_{K_f} \left[ \int_{K_f\cap A_t(n)}  \frac{1}{|t-s|^{\nu_*-\varepsilon}} \, d\sigma(s)\right] d\sigma (t) \\
    &=R_0+C_2\left\{ \int_{K_f}\int_{K_f} \frac{1}{|t-s|^{\nu_*-\varepsilon}}\,d\sigma(t)d\sigma(s)-\iint_{B}\frac{1}{|t-s|^{\nu_*-\varepsilon}}\,d\sigma(t)d\sigma(s)\right\} 
\end{align*}
where we let $C_2=C_1 c_1^{\nu_*-\varepsilon}\in (0,\infty)$, and $B=\cup_{t\in K_f}\cup _{n=1}^{N_0-1}\{ \{t\}\times ( K_f\cap A_t(n))\}\subset K_f\times K_f$. 
Note that the double integral over $B$, denote it by $R_1$, is bounded since
\[
R_1=\iint_{B}\frac{1}{|t-s|^{\nu_*-\varepsilon}}\,d\sigma(t)d\sigma(s)\leq \sigma \otimes \sigma (B)\frac{1}{\ell _{N_0}{}^{\nu_*-\varepsilon}} <\infty .
\]
Thus we have
\begin{equation} \label{ineq:K-Renergies}
    I^{(m_f)}(\sigma)\geq R_0-C_2R_1+C_2\int_{K_f}\int_{K_f} \frac{1}{|t-s|^{\nu_*-\varepsilon}}\,d\sigma(t)d\sigma(s). 
\end{equation}
The last integral in \eqref{ineq:K-Renergies} is known as the Riesz energy of the measure $\sigma$ supported on a compact set $F$:
\begin{equation}\label{Riesz}
    I_s(\sigma)=\int_F\int_F \frac{1}{|t-s|^{s}} \, d\sigma(t)d\sigma(s).
\end{equation}
It is known that if $I_s(\nu)<\infty$ for some $\nu$, then $\dim_\mathrm{H} F\geq s$. (See \cite{Falconer1990}*{Proposition 4.13} for instance.) 
Hence, since $\nu_*-\varepsilon>\dim_\mathrm{H}K_f$, for every measure $\sigma$ supported on $K_f$ we will have $I_{\nu_*-\varepsilon}(\sigma)=\infty$, and thus $I^{(m_f)}(\sigma)=\infty$ by \eqref{ineq:K-Renergies}. 
This implies that $\mathrm{Cap}_{m_f}(K_f)=0$. 

\subsection{Proof of Theorem \ref{mainThm:h_d_fail2}: Part II} 

In this section, we will prove the second half of Theorem \ref{mainThm:h_d_fail2}. 
We show that there exist a compact set $A\subset \mathbb{T}$ with $\dim_\mathrm{H} A=1$, and a probability density function $f\in L^1(\mathbb{T})$ with $m_f>0$ and $f$ is Lipschitz on $A$ such that $K_f=A$. 
Then by \cite{Fan-Karagulyan2021}*{Corollary 1.1-(2)}, we see that the conditions \eqref{cond:shepp^a} and \eqref{cond:kahane^m} will be both necessary and sufficient for $\mu_f$-Dvoretzky covering. 
However, since we can show $\mathrm{Cap}_{m_f}(K_f)>0$ for the sequence $\ell_n=\frac{1}{m_f}\left(\frac{1}{n}-\frac{1+\beta}{ n\ln n}\right)$, there can be no $\mu_f$-Dvoretzky covering. 
It now remains to construct the set $A$.

\begin{lemma} \label{sing-Frostman}
    There exists a Cantor set $A\subset \mathbb{T}$, with $\dim_\mathrm{H}A=1$ and $|A|=0$, admitting a Borel probability measure $\sigma_0$ supported on $A$ for which 
    \begin{equation} \label{ineq:sing-Frostman}
        \sigma_0(B)\leq c_0|\ln\diam B|\cdot \diam B
    \end{equation}
    for every ball $B\subset \mathbb{T}$ and some constant $c_0>0$. 
\end{lemma}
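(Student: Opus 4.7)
My plan is to construct $A$ as an inhomogeneous Cantor set whose contraction ratios approach $1/2$ at a rate that makes the resulting Hausdorff dimension equal to one while retaining Lebesgue measure zero, and to take $\sigma_0$ as the natural self-similar-type measure that distributes mass evenly at each generation. Concretely, set $\delta_0 = 1$ and $\delta_k = 2^{-k}/(k+1)$, with contraction ratios $\alpha_k := \delta_k/\delta_{k-1} = k/(2(k+1)) \in (0,1/2)$. Let $E^{(0)} = [0,1]$ and inductively replace each closed interval in $E^{(k-1)}$ by its two closed sub-intervals of length $\delta_k$ flush against the endpoints of the parent, letting $E^{(k)}$ be their disjoint union. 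Set $A = \bigcap_{k \geq 0} E^{(k)}$.

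I will then define $\sigma_0$ to be the unique Borel probability measure on $A$ assigning mass $2^{-k}$ to each of the $2^k$ intervals of $E^{(k)}$, for every $k\in \mathbb{N}$. The Lebesgue measure computation is immediate: $|E^{(k)}| = 2^k \delta_k = 1/(k+1) \to 0$, so $|A|=0$. The design principle is that the ratio $2^{-k}/\delta_k = k+1$ grows only logarithmically in $\delta_k^{-1}$, which is precisely what produces the $|\ln \diam B|$ factor in the Frostman-type bound and keeps the Hausdorff dimension equal to $1$.

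For the ball estimate, fix a ball $B$ with $r := \diam B$ and pick the unique $k\in \mathbb{N}$ with $\delta_k < r \leq \delta_{k-1}$. Since each interval of $E^{(k-1)}$ has length $\delta_{k-1} \geq r$ and consecutive such intervals are separated by a gap of length at least $(1-2\alpha_{k-1})\delta_{k-2}$, a straightforward combinatorial count (using $r/\delta_k \leq 1/\alpha_k \leq 4$) shows that $B$ meets only a bounded absolute number of intervals of $E^{(k-1)}$, and hence of $E^{(k)}$; therefore $\sigma_0(B) \lesssim 2^{-k}$. Rewriting $2^{-k} = (k+1)\delta_k < (k+1)r$ and noting that $r \leq \delta_{k-1} = 2^{-(k-1)}/k$ forces $|\ln r| \geq (k-1)\ln 2 + \ln k \gtrsim k$, I obtain $\sigma_0(B) \lesssim k\, r \lesssim |\ln r|\cdot r$. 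For $r$ bounded below by any absolute constant the inequality is trivial after enlarging $c_0$.

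Finally, the equality $\dim_\mathrm{H} A = 1$ is immediate from the mass distribution principle: for every $\varepsilon > 0$, the bound $\sigma_0(B) \leq c_0 |\ln r| \cdot r$ gives $\sigma_0(B) \leq r^{1-\varepsilon}$ for all sufficiently small $r$, whence $\mathcal{H}^{1-\varepsilon}(A) > 0$ and $\dim_\mathrm{H} A \geq 1-\varepsilon$; combined with the trivial bound $\dim_\mathrm{H} A \leq 1$ this gives $\dim_\mathrm{H} A = 1$. The only place that really requires care is the combinatorial bound on the number of generation-$k$ intervals meetable by a ball of diameter at most $\delta_{k-1}$; once this is in hand, the arithmetic of the sequence $\{\delta_k\}$ mechanically delivers the stated logarithmic growth rate in the Frostman bound.
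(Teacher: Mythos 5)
Your proposal is correct and follows essentially the same route as the paper: an inhomogeneous dyadic Cantor set with contraction ratios $\alpha_k = k/(2(k+1)) \to 1/2$, so that $\delta_k$ carries an extra factor of order $1/k$ relative to $2^{-k}$, with $\sigma_0$ distributing mass $2^{-k}$ uniformly over the generation-$k$ intervals. The paper realizes the same asymptotics by deleting a middle gap of proportion $1/k$ and cites \cite{Falconer1990}*{Example 4.6} for the dimension lower bound, whereas you obtain $\dim_\mathrm{H}A=1$ directly from the Frostman estimate \eqref{ineq:sing-Frostman} via the mass distribution principle; this is a minor streamlining, not a different argument.
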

We will prove this lemma at the end of this section, and proceed the argument to show $A(=K_f)$ has positive capacity with respect to $\Phi^{(m_f)}$, namely 
\begin{equation} \label{CapK>0}
    I^{(m_f)}(\sigma_0)=\int_{A} \int_{A} \exp \left\{ m_f\sum_{k=1}^\infty (\ell_k - |t-s|)_+\right\} \, d\sigma_0(s)d\sigma_0(t)<\infty.
\end{equation}
Note that for $(s,t)\in A\times A$ with $|t-s|\in (\ell_{n+1}, \ell_n]$ one has 
\begin{equation*}
    m_f\sum_{k=1}^\infty (\ell_k - |t-s|)_+
    =m_f\sum_{k=1}^{n} (\ell_k - |t-s|)\leq m_f\sum_{k=1}^{n} \ell_k\approx \ln n-(1+\beta )\ln \ln n
\end{equation*}
up to a uniform constant. 
For $t\in A$ and $n\in \mathbb{N}$, let $A_t(n)=(t+\ell_{n+1},t+\ell_{n}]\cup [t-\ell_{n},t-\ell_{n+1})$. 
It follows that for each $t\in A$ 
\begin{align*}
    \int_{A} \exp \left\{ m_f \sum_{k=1}^\infty (\ell_k - |t-s|)_+\right\} \, d\sigma_0(s)     &=\sum_{n=1}^\infty\int_{A\cap A_t(n)} \exp \left\{ m_f \sum_{k=1}^\infty (\ell_k - |t-s|)_+\right\} \, d\sigma_0(s) \\
    &\lesssim \sum_{n=1}^\infty\int_{A\cap A_t(n)} \frac{n}{(\ln n)^{1+\beta}} \, d\sigma_0(s) \\
    &\leq \sum_{n=1}^\infty \left( \frac{n}{(\ln n)^{1+\beta}} \right)  \sigma_0(A_t(n)).
\end{align*}
Since $\ell_n-\ell_{n+1}\lesssim 1/n^2$, it follows from \eqref{ineq:sing-Frostman} that 
\[
\sigma_0(A_t(n))\lesssim 4c_0(\ln n)\cdot \frac{1}{n^2}, 
\]
and thus
\[
\int_{A} \exp \left\{ m_f \sum_{n=1}^\infty (\ell_k - |t-s|)_+\right\} \, d\sigma_0(s)\lesssim \sum_{n=1}^\infty \frac{1}{n(\ln n)^{\beta}} 
\]
for every $t\in A$. 
Consequently, one has
\begin{align*}
    I^{(m_f)}(\sigma_0)\lesssim \sigma_0(A)\sum_{n=1}^\infty \frac{1}{n(\ln n)^{\beta}} <\infty
\end{align*}
for $\beta >1$. 

\subsubsection{Proof of Lemma \ref{sing-Frostman}} 

First, define a Cantor set having Hausdorff dimension one and of Lebesgue measure zero by the standard construction using nested sequence. 
(See \cite{Falconer1990} for instance.) 
Let $E^{(1)}=E^{(2)}=[0,1]$. 
Define $E^{(3)}_{0}$ and $E^{(3)}_{2}$ by deleting from the middle of $E^{(2)}$ an open interval $J_1$ of length $\frac{1}{3} |E^{(2)}|$, namely $J_1=(\frac{1}{3},\frac{2}{3})$,  $E^{(3)}_{0}=[0,\frac{1}{3}]$ and $E^{(3)}_{2}=[\frac{2}{3},1]$, respectively. 
In the next step, define closed intervals $E^{(4)}_{00},E^{(4)}_{02}$ by deleting from the middle of $E^{(3)}_{0}$ an open interval $J_{01}$ of length $\frac{1}{4} |E^{(3)}_{0}|$. 
By the same way, define closed intervals $E^{(4)}_{20},E^{(4)}_{22}$ by deleting from the middle of $E^{(3)}_{2}$ an open interval $J_{21}$ of length $\frac{1}{4} |E^{(3)}_{2}|$. 
If we have defined closed intervals $E^{(k+1)}_{w}$ for $w=w_1\cdots w_{k-1}\in \{0,2\}^{k-1}$, then define $E^{(k+2)}_{w0},E^{(k+2)}_{w2}$ by deleting from the middle of $E^{(k+1)}_{w}$ an open interval $J_{w1}$ of length $\frac{1}{k} |E^{(k+1)}_{w}|$, that is $E^{(k+1)}_{w}=E^{(k+2)}_{w0}\sqcup J_{w1}\sqcup E^{(k+2)}_{w2}$. 
Define 
\[
A=\cap _{k=1}^\infty \cup _{w\in \{0,2\}^{k}}E^{(k+2)}_w.
\]

For each integer $k\geq 3$, all the intervals $E^{(k)}_{w}$, $w\in \{ 0,2\}^{k-2}$, have the same length, and denote it by $\delta_k$. 
(Set $\delta_1=\delta_2=1$.)
In fact, for $k\geq 3$ one has
\[
\delta_k=\frac{1}{2} \left(1-\frac{1}{k}\right)\delta_{k-1},
\]
and thus $\delta_{k}/\delta_{k-1}\in [1/3,1/2)$. 
It follows that for all large $k\in \mathbb{N}$ 
\begin{equation} \label{diamE^k}
    \delta_k=\prod_{j=3}^k\frac{1}{2} \left(1-\frac{1}{j}\right) \approx \left(\frac{1}{2}\right)^{k-2}e^{-\sum_{j=3}^k\frac{1}{j}} \approx \left(\frac{1}{2}\right)^{k-2}\frac{1}{k}
\end{equation}
up to uniform constants. 
Hence one has $|A|=0$. 
By \cite{Falconer1990}*{Example 4.6}, one also has
\[
\dim_{\mathrm{H}}A\geq \liminf_{k\to \infty} \frac{\ln (2^k)}{-\ln \left( \frac{2}{k+1}\delta_k\right)} =1. 
\]

Next, we define a distribution function $F$ on $[0,1]$ to obtain a measure supported on the Cantor set $A$ defined above, and it is accomplished by the same way as in the proof of Lemma \ref{lem:inhomocantor}, hence we omit this part. 
Let $\sigma_0$ be the associated probability measure supported on $A$, i.e.
\[
\sigma_0((a,b))=F(b)-F(a).
\]
Then one can see that 
\[
\sigma_0(E^{(k+2)}_w)=\left(\frac{1}{2}\right)^{k}
\]
for every $w\in \{0,2\}^{k}$ and $k\in \mathbb{N}$. 
Since $\mathrm{diam}E^{(k+2)}_w=\delta_{k+2}$, it follows from \eqref{diamE^k} that 
\[
\sigma_0(E^{(k+2)}_w)\lesssim (k+2)\mathrm{diam}E^{(k+2)}_w\lesssim -\ln (\mathrm{diam}E^{(k+2)}_w)\cdot \mathrm{diam}E^{(k+2)}_w
\]
up to a uniform constant as $-\ln \delta_{k+2}=k\ln 2+\ln (k+2)$. 
Given a ball $B$ of diameter (length) $r>0$, it is covered by at most two $E^{(k+2)}_{w}$ and $E^{(k+2)}_{w'}$ for some $w,w'\in \{0,2\}^{k}$ where $k=k_r\in \mathbb{N}$ is chosen so that $\delta_{k+3}\leq r<\delta _{k+2}$. 
Thus it follows from above that 
\[
\sigma_0(B)\leq c_0|\ln (\mathrm{diam}B)|\cdot \mathrm{diam}B
\]
for some constant $c_0>0$. 
Lemma \ref{sing-Frostman} is proven.

\subsection{Proof of Corollary \ref{mainThm:necS-unnecK}}

Condition \eqref{cond:shepp^a} is necessary due to Proposition \ref{mainProp:DtoS}. 
To see that \eqref{cond:kahane^m} is not necessary, consider the density function $h$ constructed in Theorem \ref{mainThm:h_d_fail1}. 
Consider also the sequence $\{\ell_n\}_{n \in \mathbb{N}}$ constructed in Theorem \ref{mainThm:h_d_fail1}. 
We then have $\mathrm{Cap}_{m_h}(K_h)>0$. 
Note that $|K_h|=0$. 
Hence, by applying Theorem \ref{mainThm:perturbation} we can find a density function $h_0$ so that $m_{h_0}=m_h$, $K_{h_0}=K_h$ and for which there is $\mu_{h_0}$-Dvoretzky covering. 
Thus, for $h_0$ and $\{\ell_n\}_{n \in \mathbb{N}}$ above, there will be $\mu_{h_0}$-Dvoretzky covering despite the fact that $\mathrm{Cap}_{m_{h_0}}(K_{h_0})=\mathrm{Cap}_{m_h}(K_h)>0$. 
Taking $f=h_0$, we obtain Corollary \ref{mainThm:necS-unnecK}.

\begin{bibdiv}
\begin{biblist}

\bib{Billard1965}{article}{
   author={Billard, P.},
   title={S\'{e}ries de Fourier al\'{e}atoirement born\'{e}es, continues, uniform\'{e}ment
   convergentes},
   language={French},
   journal={Ann. Sci. \'{E}cole Norm. Sup. (3)},
   volume={82},
   date={1965},
   pages={131--179},
   issn={0012-9593},
   review={\MR{0182832}},
}

\bib{Bourgain1988}{article}{
   author={Bourgain, J.},
   title={Almost sure convergence and bounded entropy},
   journal={Israel J. Math.},
   volume={63},
   date={1988},
   number={1},
   pages={79--97},
   issn={0021-2172},
   review={\MR{959049}},
   doi={10.1007/BF02765022},
}

\bib{Dvoretzky1956}{article}{
   author={Dvoretzky, Aryeh},
   title={On covering a circle by randomly placed arcs},
   journal={Proc. Nat. Acad. Sci. U.S.A.},
   volume={42},
   date={1956},
   pages={199--203},
   issn={0027-8424},
   review={\MR{79365}},
   doi={10.1073/pnas.42.4.199},
}

\bib{Falconer1990}{book}{
   author={Falconer, Kenneth},
   title={Fractal geometry},
   note={Mathematical foundations and applications},
   publisher={John Wiley \& Sons, Ltd., Chichester},
   date={1990},
   pages={xxii+288},
   isbn={0-471-92287-0},
   review={\MR{1102677}},
}

\bib{Fan-Karagulyan2021}{article}{
   author={Fan, Aihua},
   author={Karagulyan, Davit},
   title={On $\mu$-Dvoretzky random covering of the circle},
   journal={Bernoulli},
   volume={27},
   date={2021},
   number={2},
   pages={1270--1290},
   issn={1350-7265},
   review={\MR{4255234}},
   doi={10.3150/20-bej1273},
}

\bib{Fan-Schmeling-Troubetzkoy2013}{article}{
   author={Fan, Ai-Hua},
   author={Schmeling, J\"{o}rg},
   author={Troubetzkoy, Serge},
   title={A multifractal mass transference principle for Gibbs measures with
   applications to dynamical Diophantine approximation},
   journal={Proc. Lond. Math. Soc. (3)},
   volume={107},
   date={2013},
   number={5},
   pages={1173--1219},
   issn={0024-6115},
   review={\MR{3126394}},
   doi={10.1112/plms/pdt005},
}

\bib{Fan-Wu2004}{article}{
   author={Fan, Ai-Hua},
   author={Wu, Jun},
   title={On the covering by small random intervals},
   language={English, with English and French summaries},
   journal={Ann. Inst. H. Poincar\'{e} Probab. Statist.},
   volume={40},
   date={2004},
   number={1},
   pages={125--131},
   issn={0246-0203},
   review={\MR{2037476}},
   doi={10.1016/S0246-0203(03)00056-6},
}

\bib{Hawkes1973}{article}{
   author={Hawkes, John},
   title={On the covering of small sets by random intervals},
   journal={Quart. J. Math. Oxford Ser. (2)},
   volume={24},
   date={1973},
   pages={427--432},
   issn={0033-5606},
   review={\MR{324748}},
   doi={10.1093/qmath/24.1.427},
}

\bib{Kahane1959}{article}{
   author={Kahane, Jean-Pierre},
   title={Sur le recouvrement d'un cercle par des arcs dispos\'{e}s au hasard},
   language={French},
   journal={C. R. Acad. Sci. Paris},
   volume={248},
   date={1959},
   pages={184--186},
   issn={0001-4036},
   review={\MR{103533}},
}

\bib{Kahane1985}{book}{
   author={Kahane, Jean-Pierre},
   title={Some random series of functions},
   series={Cambridge Studies in Advanced Mathematics},
   volume={5},
   edition={2},
   publisher={Cambridge University Press, Cambridge},
   date={1985},
   pages={xiv+305},
   isbn={0-521-24966-X},
   isbn={0-521-45602-9},
   review={\MR{833073}},
}


\bib{Kahane1990}{article}{
   author={Kahane, Jean-Pierre},
   title={Recouvrements al\'{e}atoires et th\'{e}orie du potentiel},
   language={French},
   journal={Colloq. Math.},
   volume={60/61},
   date={1990},
   number={2},
   pages={387--411},
   issn={0010-1354},
   review={\MR{1096386}},
   doi={10.4064/cm-60-61-2-287-411},
}

\bib{Karagulyan-Safaryan2015}{article}{
   author={Karagulyan, G. A.},
   author={Safaryan, M. H.},
   title={On generalizations of Fatou's theorem for the integrals with
   general kernels},
   journal={J. Geom. Anal.},
   volume={25},
   date={2015},
   number={3},
   pages={1459--1475},
   issn={1050-6926},
   review={\MR{3358060}},
   doi={10.1007/s12220-014-9479-0},
}

\bib{Karagulyan-Safaryan2017}{article}{
   author={Karagulyan, G.},
   author={Safaryan, M.},
   title={On a theorem of Littlewood},
   journal={Hokkaido Math. J.},
   volume={1},
   date={2017},
   number={46},
   pages={87--106},
   review={\MR{3677876}},
   doi={10.14492/hokmj/1498788097},
}

\bib{Kostyukovsky-Olevskii2004}{article}{
   author={Kostyukovsky, Sergey},
   author={Olevskii, Alexander},
   title={Compactness of families of convolution operators with respect to
   convergence almost everywhere},
   journal={Real Anal. Exchange},
   volume={30},
   date={2004/05},
   number={2},
   pages={755--765},
   issn={0147-1937},
   review={\MR{2177432}},
}

\bib{Littlewood1927}{article}{
   author={Littlewood, J. E.},
   title={On a theorem of Fatou},
   journal={Journal of London Math},
   volume={2},
   date={1927},
   number={2},
   pages={172–-176},
}

\bib{Mandelbrot1972}{article}{
   author={Mandelbrot, Beno\^{\i}t B.},
   title={On Dvoretzky coverings for the circle},
   journal={Z. Wahrscheinlichkeitstheorie und Verw. Gebiete},
   volume={22},
   date={1972},
   pages={158--160},
   review={\MR{309163}},
   doi={10.1007/BF00532734},
}

\bib{Persson-Rams2017}{article}{
   author={Persson, Tomas},
   author={Rams, Micha\l },
   title={On shrinking targets for piecewise expanding interval maps},
   journal={Ergodic Theory Dynam. Systems},
   volume={37},
   date={2017},
   number={2},
   pages={646--663},
   issn={0143-3857},
   review={\MR{3614042}},
   doi={10.1017/etds.2015.49},
}

\bib{Shepp1972}{article}{
   author={Shepp, L. A.},
   title={Covering the circle with random arcs},
   journal={Israel J. Math.},
   volume={11},
   date={1972},
   pages={328--345},
   issn={0021-2172},
   review={\MR{295402}},
   doi={10.1007/BF02789327},
}

\bib{Tang2012}{article}{
   author={Tang, JunMin},
   title={Random coverings of the circle with i.i.d. centers},
   journal={Sci. China Math.},
   volume={55},
   date={2012},
   number={6},
   pages={1257--1268},
   issn={1674-7283},
   review={\MR{2925591}},
   doi={10.1007/s11425-011-4338-y},
}

\bib{Jasson1986}{article}{
   author={Svante, J.},
   title={Random coverings in several dimensions},
   journal={Acta Math.},
   volume={156},
   date={1986},
   number={6},
   pages={83--118},
}

\end{biblist}
\end{bibdiv}

\end{document}